\begin{document}
\title{Chevalley Polytopes and Newton-Okounkov bodies}
\author{Peter Spacek}
\address{Technische Universit\"at Chemnitz, Germany}
\email{peter.spacek@math.tu-chemnitz.de}

\author{Charles Wang}
\address{University of Michigan, USA}
\email{cmwa@umich.edu}
\date{\today}

\begin{abstract}
We construct a family of polytopes, which we call \emph{Chevalley polytopes}, associated to homogeneous spaces $\X=\G/\P$ in their projective embeddings $\X\hookrightarrow \bP(V_{\varpi})$ together with a choice of reduced expression for the minimal coset representative $\wP$ of $\wo$ in $\weyl/\weylp$. When $\X$ is minuscule in its minimal embedding, we describe our construction in terms of order polytopes of minuscule posets and use the associated combinatorics to show that minuscule Chevalley polytopes are Newton-Okounkov bodies for $\X$ and that the Pl\"ucker coordinates on $\X$ form a Khovanskii basis for $\C[\X]$. We conjecture similar properties for general $\X$ and general embeddings $\X\hookrightarrow\bP(V_\varpi)$, along with a remarkable decomposition property which we consider as a polytopal shadow of the Littlewood-Richardson rule. We highlight a connection between Chevalley polytopes and string polytopes and give examples where Chevalley polytopes possess better combinatorial properties than string polytopes. We conclude with several examples further illustrating and supporting our conjectures.
\end{abstract}

\maketitle
\vspace{-.5cm}
\setcounter{tocdepth}{2}
\tableofcontents

\section{Introduction}
The geometry of homogeneous spaces has many fruitful connections with polyhedral geometry, and in turn these polytopes provide powerful combinatorial techniques for studying the geometry of homogeneous spaces. To give a few examples, Mirkovi\'c-Vilonen cycles give rise to MV polytopes as their images under the moment map \cite{anderson_mv_polytope}, string polytopes parametrize crystal bases for irreducible representations of $\G/\P$ and furthermore are \emph{Newton-Okounkov bodies} \cite{littelman_cones_crystals_patterns}, the lattice points of Gel'fand-Tsetlin polytopes parametrize distinguished bases for irreducible representations of $\G$ \cite{gelfand_tsetlin}. In this article, we construct a polytope, which we call a \emph{Chevalley polytope}, associated to the triple $(\X,\varpi,\bs)$ where
\begin{enumerate}
    \item $\X=\G/\P$ is a homogeneous space for the action of a simple, semi-simple, complex algebraic group $\G$ with associated parabolic subgroup $\P$,
    \item $\varpi$ denotes a dominant integral weight of $\G$ whose corresponding highest weight line in the irreducible $\G$-representation $V_\varpi$ is stabilized by $\P$ and hence induces a projective embedding $\X\hookrightarrow\bP(V_\varpi)$ into the projectivization of $V_\varpi$, and
    \item $\bs=s_{r_{\ellwP}}\cdots s_{r_1}$ is a choice of reduced expression for the minimal coset representative $\wP$ of the coset $\wo\weylp\in\weyl/\weylp$, where $\ellwP=\ell(\wP)$.
\end{enumerate}
Using this triple, we construct in Section \ref{sec:construction} a \emph{valuation} (over $\C$) $\nu_{\X,\varpi,\bs}:\C[\X]\rightarrow\Z^{\ellwP}$ on the homogeneous coordinate ring $\C[\X]$ of $\X$ in its projective embedding $\X\hookrightarrow\bP(V_\varpi)$, noting that $\dim(\X)=\ellwP$. We define $\nu_{\X,\varpi,\bs}$ by sending $p\in\C[\X]$ to the exponent vector of the degree reverse lexicographic minimal term of its restriction $p|_{\Xo}$ to an algebraic torus $\Xo=\opendunim/\P\subset\X$, whose coordinates are parametrized by the choice of $\bs$. We then define the Chevalley polytope $\cP_{\X,\varpi,\bs}$ to be the convex hull of the valuations of the homogeneous degree-$1$ part of $\C[\X]$:
\[
\cP_{\X,\varpi,\bs} = \conv\left(\{\nu_{\X,\varpi,\bs}(p)\mid p\in\C[\X],~\deg(p)=1\}\right).
\] 
Such valuations appear frequently in the context of \emph{Khovanskii bases} and \emph{Newton-Okounkov bodies}, and our valuation is closely related to, but slightly different from, the \emph{string parametrization} originally studied in \cite{littelman_cones_crystals_patterns, BZ-tensor-product-mult} and later in \cite{kaveh_strings}. We remark upon the relationship between Chevalley polytopes and string polytopes in Section \ref{sec:string} and we give examples of Chevalley polytopes that possess better combinatorial properties than the corresponding string polytopes. 

In Section \ref{sec:minuscule}, we study Chevalley polytopes arising from \emph{minuscule} spaces $\X=\G/\P$ (where $\P=\P_k$ is a maximal parabolic subgroup stabilizing the highest weight line of the minuscule fundamental representation $V_{\fwt[k]}$) in their minimal embeddings $\X\hookrightarrow\bP(V_{\fwt[k]})$, also known as their \emph{(generalized) Pl\"ucker embeddings}. Due to the \emph{full-commutativity} of $\wP$ in this case \cite[Theorem 6.1]{Stembridge_Fully_commutative_elements_of_Coxeter_groups}, both the valuation and Chevalley polytope do not depend on the reduced expression $\bs$ (up to permutations of the coordinates of $\Z^{\ellwP}$) yielding an intrinsic valuation $\nu_{\X,\fwt[k]}$ and polytope $\cP_{\X,\fwt[k]}$.
We furthermore give a combinatorial interpretation of the valuations of Pl\"ucker coordinates for $\X$ in terms of filters of the associated \emph{minuscule poset} $\minposet$ and we use this to show that the Chevalley polytope $\cP_{\X,\fwt[k]}$ is exactly the \emph{order polytope} $\cO_{\minposet}$ associated to $\minposet$. Then, using the combinatorial properties of order polytopes, we establish the following theorem:

\begin{thm}[Theorems \ref{thm:nobody} and \ref{thm:plucker-khovanskii} below]
Let $\X=\G/\P_k\hookrightarrow\bP(V_{\fwt[k]})$ be a minuscule homogeneous space in its minimal embedding. The Chevalley polytope $\cP_{\X,\fwt[k]}$ is a Newton-Okounkov body for $\X$ with respect to the valuation $\nu_{\X,\fwt[k]}$. Furthermore, the Pl\"ucker coordinates on $\X$ form a Khovanskii basis for $\C[\X]$ with respect to the valuation $\nu_{\X,\fwt[k]}$. Finally, there is a toric degeneration of $\X$ to the projectively normal toric variety associated to $\cP_{\X,\fwt[k]}$. 
\end{thm}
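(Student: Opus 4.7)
I would build on the combinatorial identification established in the preceding part of the paper: $\cP_{\X,\fwt[k]}$ equals the order polytope $\cO_{\minposet}$ of the associated minuscule poset, and the valuation $\nu_{\X,\fwt[k]}(p_\tau)$ of each Pl\"ucker coordinate $p_\tau$ is the indicator function $\chi_{F_\tau}$ of an order filter $F_\tau\subset\minposet$. Since Stanley's theorem identifies the vertices of $\cO_{\minposet}$ with exactly these filter indicators, the equality $\cP_{\X,\fwt[k]}=\conv\{\nu(p_\tau)\}$ is immediate from the construction.

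The core technical task is the Khovanskii basis statement; once it is in hand, the Newton-Okounkov body claim and the toric degeneration follow from the standard Anderson/Kaveh--Manon machinery. My plan is to show, in each degree $d$, that $\nu(\C[\X]_d\setminus\{0\})$ equals the set of lattice points of $d\cdot\cO_{\minposet}$. In one direction, any lattice point of $d\cdot\cO_{\minposet}$ is an order-preserving map $\minposet\to\{0,\ldots,d\}$ and admits Stanley's canonical decomposition into a descending chain of filter indicators $\chi_{F_1}+\cdots+\chi_{F_d}$; by multiplicativity of $\nu_{\X,\fwt[k]}$ on the torus $\Xo$, this point is realized as $\nu(p_{\tau_1}\cdots p_{\tau_d})$, and this product is moreover nonzero in $\C[\X]_d$ because the corresponding multichain in $\weyl/\weylp$ is a standard monomial. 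In the other direction, Borel--Weil gives $\dim\C[\X]_d=\dim V_{d\fwt[k]}$, and in the minuscule case this agrees with the Ehrhart count $|d\cdot\cO_{\minposet}\cap\Z^{\minposet}|$ by the Lakshmibai--Seshadri standard monomial theory (or equivalently Proctor's product formula for $d$-complete minuscule posets). Since a valuation can take at most $\dim\C[\X]_d$ distinct values on $\C[\X]_d$, the two counts must match, which forces the value semigroup to be generated in degree $1$ by $\{\nu(p_\tau)\}$. This is the Khovanskii basis property.

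With the Khovanskii basis established, the Newton-Okounkov body identification is automatic: $\Delta(\C[\X],\nu_{\X,\fwt[k]})$ equals the convex hull of the degree-normalized valuations, which by the above is $\conv\{\nu(p_\tau)\}=\cO_{\minposet}=\cP_{\X,\fwt[k]}$. The toric degeneration to the projective toric variety of $\cP_{\X,\fwt[k]}$ follows from Anderson's construction applied to the finitely generated, saturated value semigroup. Projective normality is automatic since the Khovanskii basis sits in degree $1$ and order polytopes admit regular unimodular triangulations by chains of filters, so all lattice points of higher dilates are nonnegative integer combinations of vertices.

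The main obstacle is the dimension identity $\dim V_{d\fwt[k]}=|d\cdot\cO_{\minposet}\cap\Z^{\minposet}|$. Each side has a classical description, but matching them uniformly across all minuscule types (Grassmannians, quadrics, the maximal isotropic Grassmannian $D_n/P_n$, the Cayley plane $E_6/P_1$, and the Freudenthal variety $E_7/P_7$) is the delicate point. The cleanest route is via Seshadri--Lakshmibai standard monomial theory, which produces a basis of $\C[\X]_d$ indexed by multichains of filters of $\minposet$ and simultaneously supplies both the lower bound on the number of distinct valuations and the matching upper bound on $\dim\C[\X]_d$, sidestepping any type-by-type case analysis.
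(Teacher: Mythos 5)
Your proof is correct, but it takes a genuinely different route than the paper's, and it is worth spelling out the contrast.

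The paper proves Theorem \ref{thm:nobody} first, via a volume argument: it uses the Chevalley formula to identify $\deg(\X)$ with the number of maximal chains in the Schubert poset, identifies maximal chains of the distributive lattice $J(\minposet)$ with linear extensions of $\minposet$, invokes Stanley's theorem to get $\vol(\cO_{\minposet})=\deg(\X)$, and then applies the Kaveh-Manon equality $\deg(\X)=\vol(\Delta(\C[\X],\nu))$ for one-dimensional-leaf valuations. Equality of volumes plus the obvious inclusion $\cP\subset\Delta$ finishes the argument. The Khovanskii basis property (Theorem \ref{thm:plucker-khovanskii}) is then deduced \emph{a posteriori} from the integer decomposition property of order polytopes (Corollary \ref{cor:idp}). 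You reverse the order: you prove the Khovanskii basis statement directly by a degree-by-degree counting argument, using Seshadri's standard monomial theory to get both the dimension identity $\dim\C[\X]_d=|d\cO_{\minposet}\cap\Z^{\minposet}|$ and the nonvanishing of products along multichains, then sandwich with the elementary fact that a valuation takes at most $\dim\C[\X]_d$ distinct values on $\C[\X]_d\setminus\{0\}$. The Newton-Okounkov body equality is then an immediate formal consequence, and you never need the Kaveh-Manon volume-degree theorem. The trade-off is clear: your route invokes standard monomial theory as a black box (which the paper deliberately avoids --- it only uses the Chevalley formula and the lattice structure of the Bruhat order), while the paper's route invokes the Kaveh-Manon machinery for Newton-Okounkov bodies. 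Both are legitimate; yours is closer in spirit to the classical Hodge/Seshadri degeneration story and is in some sense sharper (it establishes the value-semigroup equality at every finite degree rather than asymptotically), whereas the paper's is more self-contained relative to its own cited references and more clearly generalizable to the non-minuscule conjectures in Section \ref{sec:general}, where standard monomial theory in this clean multichain form is not available.

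One small caution: when you write that the dimension identity can be obtained ``equivalently'' from Proctor's product formula, that is correct in substance but requires matching the Weyl dimension formula for $V_{d\fwt[k]}$ against the hook-content formula for $d$-complete posets type by type; the Lakshmibai-Seshadri route sidesteps this and is the one you should lean on, as you yourself note.
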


While Khovanskii bases and Newton-Okounkov bodies are generally difficult to construct, these theorems give straightforward constructions for both when $\X$ is minuscule. Outside of the minuscule case, the situation is less clear. In Section \ref{sec:general}, we generalize the combinatorial interpretation of the valuations of Pl\"ucker coordinates studied in the minuscule case to general $\X$ and general embeddings $\X\hookrightarrow\bP(V_\varpi)$ by defining \emph{weighted embeddings} of \emph{weighted heaps} into the heap $\sH_{\bs}$ associated to $\bs$. A \emph{heap} is a poset associated to an expression $\bs$ of a Weyl group element and is invariant under commutations in $\bs$; in particular when $\X$ is minuscule, the heap $\sH_{\bs}$ coincides with $\minposet$ for any choice of $\bs$ and we thus view heaps as direct generalizations of minuscule posets. The difficulty in the more general setting is twofold: first, the valuation and polytope both depend heavily upon the choice of $\bs$ and furthermore we need to consider weighted embeddings of weighted heaps which do not arise as filters of the heap $\sH_{\bs}$ associated to $\bs$.

Furthermore, compared to the minuscule case, we replace Pl\"ucker coordinates with a set of coordinates corresponding to a weight basis derived from the actions of monomials in the Chevalley generators of the Lie algebra $\g$ associated to $\G$, which we refer to as a \emph{Chevalley basis for $V_\varpi$}. This reason for this replacement is that minuscule representations admit a canonical choice of weight basis up to (global) scaling yielding a canonical choice of Pl\"ucker coordinates on $\X$, while general representations do not admit such a canonical choice. In this more general setting, the set of coordinates coming from a Chevalley basis for $V_\varpi$ provides a natural generalization of the set of Pl\"ucker coordinates on $\X$. Generalizing the statements we obtained in the minuscule setting, we present the following conjectures for arbitrary $\X\hookrightarrow\bP(V_\varpi)$ and $\bs$:

\begin{conj}[Conjectures \ref{conj:nobody} and \ref{conj:khovanskii} below]
Let $\X=\G/\P\hookrightarrow \bP(V_{\varpi})$ be a homogeneous space with a projective embedding, then there exists a reduced expression $\bs$ for $\wP$ such that the Chevalley polytope $\cP_{\X,\varpi,\bs}$ is a Newton-Okounkov body for $\X$ with respect to the valuation $\nu_{\X,\varpi,\bs}$. For any such $\bs$, the coordinates on $\X$ corresponding to any \emph{Chevalley basis for $V_\varpi$} form a Khovanskii basis for $\C[\X]$ with respect to the valuation $\nu_{\X,\varpi,\bs}$. 
\end{conj}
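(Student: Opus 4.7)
The plan is to extend the combinatorial framework established in the minuscule setting --- where order polytopes and filters of the minuscule poset controlled everything --- to the general setting via heaps $\sH_{\bs}$ and weighted embeddings of weighted heaps. The minuscule argument factored cleanly into: (a) valuations of Pl\"ucker coordinates are indicator vectors of filters of $\minposet$; (b) these vectors are exactly the $0/1$ lattice points of $\cO_{\minposet}$ and their convex hull is all of $\cP_{\X,\fwt[k]}$; (c) the straightening law together with order-polytope combinatorics forces the Khovanskii basis property; (d) the Newton--Okounkov body statement and the toric degeneration then follow from Kaveh--Manon. I would generalize each step in turn.

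First I would compute $\nu_{\X,\varpi,\bs}$ on a Chevalley basis coordinate by writing the corresponding weight vector in $V_\varpi$ as a Lie-algebra monomial applied to a fixed highest weight vector, and then unwinding the definition of $\nu_{\X,\varpi,\bs}$ using the torus coordinates on $\Xo=\opendunim/\P$ indexed by $\bs$. This should produce, for each Chevalley basis coordinate, a vector in $\Z^{\ellwP}$ realizable as (a decoration of) a weighted embedding of a weighted heap into $\sH_{\bs}$. I would then need to show that these specific valuations exhaust the lattice points of $\cP_{\X,\varpi,\bs}$ --- equivalently, that every integer point of the Chevalley polytope is a Minkowski sum of valuations of Chevalley basis coordinates. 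This is exactly the content of the conjectural decomposition property of Section~\ref{sec:general}, the polytopal shadow of the Littlewood--Richardson rule.

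The Khovanskii basis claim would then follow from the Kaveh--Manon criterion, which reduces it to showing that the sub-semigroup of $\Z^{\ellwP}$ generated by the valuations of the Chevalley basis coordinates equals the full value semigroup $\nu_{\X,\varpi,\bs}(\C[\X]\setminus\{0\})$. Concretely, I would take a product of Chevalley basis coordinates, expand it on $\Xo$ in torus coordinates, compute its degree reverse lexicographic leading term, and argue that this term agrees with the sum of the valuations of the factors. This additivity-on-products is exactly what a Khovanskii basis demands, and its proof should rest on a combinatorial manipulation of superimposed weighted embeddings generalizing the lattice combinatorics of order polytopes. Once the Khovanskii property is in place, the Newton--Okounkov body statement is automatic from standard results of Anderson and Kaveh--Manon, and the flat degeneration of $\X$ to the projectively normal toric variety attached to $\cP_{\X,\varpi,\bs}$ follows from the degree-filtration Rees construction.

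The main obstacle will be proving the decomposition property together with identifying which reduced expressions $\bs$ actually yield a Khovanskii basis --- the conjecture itself warns that not every $\bs$ will do. In the minuscule case the full commutativity of $\wP$ made the answer independent of $\bs$, but outside that regime the $\bs$-dependence should be analogous to how Gr\"obner degenerations depend on the chosen term order. Pinning down a combinatorial criterion on $\bs$ --- perhaps one under which $\sH_{\bs}$ admits a particularly well-behaved linear extension, in the spirit of the fully commutative case --- that guarantees the additivity of $\nu_{\X,\varpi,\bs}$ on products of Chevalley basis elements is the central difficulty, and a satisfactory resolution would most likely go through a direct combinatorial verification of the polytopal Littlewood--Richardson decomposition itself.
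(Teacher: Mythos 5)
This is stated as a conjecture in the paper (Conjectures~\ref{conj:nobody} and \ref{conj:khovanskii}); the authors provide no proof, only the proven minuscule case as motivation (Theorems~\ref{thm:nobody} and \ref{thm:plucker-khovanskii}) and a collection of computational examples in Appendix~\ref{sec:examples}. Your proposal is therefore best read as a research program rather than as a proof to be checked against one --- and as a program it does faithfully track the structure of the authors' own minuscule argument, since you reproduce the two key combinatorial inputs (valuations of coordinates as (weighted) embeddings into $\sH_{\bs}$, and integer-decomposition reasoning on the resulting lattice points) and correctly identify that the central open step is the polytopal Littlewood--Richardson decomposition (Conjecture~\ref{conj:decomposition}) together with the $\bs$-dependence.

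One logical nuance worth flagging: you propose to first establish the Khovanskii property and then obtain the Newton--Okounkov body statement ``automatically'' from Kaveh--Manon. The paper's minuscule proof runs the other way, and for a reason. Kaveh--Manon tells you that a finite Khovanskii basis makes $\Delta(\C[\X],\nu)$ a rational polytope, but it does not by itself identify that polytope with $\cP_{\X,\varpi,\bs}$. The authors' argument instead shows $\cP_{\X,\varpi,\bs}\subseteq\Delta$ is an inclusion of closed convex sets of the same normalized volume (using that $\nu$ has one-dimensional leaves, so $\vol(\Delta)=\deg(\X)$, together with the order-polytope volume count), and only \emph{then} uses the integer decomposition property of the order polytope to extract the Khovanskii basis. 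Your route can be made to work --- if one shows the degree-$1$ valuations generate the full value semigroup, then $\Delta$ is the convex hull of those degree-$1$ valuations, i.e.\ $\cP_{\X,\varpi,\bs}$ --- but this requires first showing that the lattice points of $\cP_{\X,\varpi,\bs}$ are exactly the valuations of Chevalley basis coordinates, which is not at all automatic outside the minuscule regime (see the $\Sym^3(V_{\fwt[2]})$ example in the appendix, where some na\"ive monomial actions give ``wrong'' exponent vectors that must be discarded after accounting for linear relations). In short, the volume argument is not an optional detour but the mechanism by which the paper avoids having to classify lattice points directly, and any general attack should either retain it or replace it with a genuine classification of the lattice points of $\cP_{\X,\varpi,\bs}$ for the good choices of $\bs$.

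Finally, your closing paragraph correctly anticipates that a combinatorial criterion on $\bs$ is needed: the appendix gives explicit counterexamples (e.g.\ $\Sp_8/P_2$ with $\bs_1$ and $\bs_2$) where the Chevalley polytope strictly undershoots $\deg(\X)$, so the existential quantifier on $\bs$ in the conjecture cannot be upgraded to a universal one. This is the genuine open problem, and you are right to treat it as the crux.
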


In general, we expect there to be numerous reduced expressions $\bs$ for which $\cP_{\X,\varpi,\bs}$ is a Newton-Okounkov body for $\X$ and for which the coordinates corresponding to any Chevalley basis for $V_\varpi$ form a Khovanskii basis for $\C[\X]$, both with respect to the valuation $\nu_{\X,\varpi,\bs}$. However, there are also reduced expressions $\bs$ for which we require higher degree elements of $\C[\X]$ to form a Khovanskii basis and whose rescaled valuations are necessary to obtain a Newton-Okounkov body. In Appendix \ref{sec:examples}, we present some examples where all reduced expressions satisfy the conjecture above, as well as some examples where the conjecture only holds for certain reduced expressions. Finally, we also observed the following remarkable decomposition property:

\begin{conj}[Conjecture \ref{conj:decomposition} below]
Let $\X=\G/\P\hookrightarrow\bP(V_\varpi)$ be a homogeneous space with a projective embedding, and let $\varpi=\sum_i c_i\fwt[i]$ be a dominant integral weight for $\G$ with its fundamental weight basis expansion. Then for any reduced expression $\bs$ for $\wP$, there exist reduced expressions $\bs_j$ for the minimal coset representatives $w^{P_j}$ such that 
\[
\cP_{\X,\varpi,\bs} = \sum\nolimits_j c_j\cdot \cP_{\G/\P_j, \fwt[j], \bs_j},
\]
where the sum denotes the Minkowski sum and where each $\cP_{\G/\P_j,\fwt[j],\bs_j}$ is embedded into by identifying $w^{P_i}$ as a subword of $\wP$. 
\end{conj}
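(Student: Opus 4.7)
The plan is to exploit the tensor product decomposition of $V_\varpi$ as the Cartan component of $\bigotimes_j V_{\fwt[j]}^{\otimes c_j}$. Correspondingly, the embedding $\X \hookrightarrow \bP(V_\varpi)$ factors as the diagonal $\X \hookrightarrow \prod_{c_j > 0} \G/\P_j$ (well-defined since $\P = \bigcap_{c_j > 0} \P_j$) composed with the Segre-Veronese embedding $\prod_j \G/\P_j \hookrightarrow \prod_j \bP(V_{\fwt[j]}) \hookrightarrow \bP(V_\varpi)$. Under this factorization, each degree-$1$ generator of $\C[\X]$ arises, via Cartan projection, from a linear combination of products of $c_j$-th powers of coordinates on the factors $\G/\P_j$. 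The Minkowski decomposition of $\cP_{\X,\varpi,\bs}$ should follow from additivity of the valuation $\nu_{\X,\varpi,\bs}$ on such products, once we align the coordinate systems through a subword identification.

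First, I would set up the subword identification. Since $\P = \bigcap_{c_j > 0} \P_j$ we have $\weylp = \bigcap_j W_{\P_j}$, so $w^{\P_j} \leq \wP$ in the Bruhat order (both are minimal coset representatives of $\wo W_{\P_j}$ in the finer versus coarser quotients). Tits' subword characterization then guarantees that any reduced expression $\bs$ for $\wP$ contains a reduced subword for each $w^{\P_j}$; choosing such $\bs_j$ gives coordinate inclusions $\Z^{\ell(w^{\P_j})} \hookrightarrow \Z^{\ellwP}$ into which the rescaled polytopes $c_j \cP_{\G/\P_j,\fwt[j],\bs_j}$ naturally embed. Next, I would verify that the torus parametrization of $\Xo$ by $\bs$ from Section \ref{sec:construction} restricts on each subword $\bs_j$ to the standard torus parametrization of $(\G/\P_j)_o$. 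Assuming this compatibility, if $p_j$ is a degree-$1$ coordinate on $\G/\P_j$, its $c_j$-th power pulled back through the Segre-Veronese map contributes to $\C[\X]_1$ with restriction to $\Xo$ supported on the subword-indexed coordinates. By multiplicativity of the degree reverse lexicographic leading term under products, this would yield
\[
\nu_{\X,\varpi,\bs}\Bigl(\prod_j p_j^{c_j}\Bigr) = \sum_j c_j \cdot \nu_{\G/\P_j,\fwt[j],\bs_j}(p_j),
\]
with each summand lying in its corresponding subword-coordinate block.

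The remaining and most substantial step is to identify the vertices of $\cP_{\X,\varpi,\bs}$ with Minkowski sums of vertices of the summands. Additivity on decomposable products immediately yields one inclusion: every Minkowski-sum vertex $\sum_j c_j v_j$ is realized by the valuation of some decomposable Cartan projection. The reverse inclusion is the main obstacle, since a general degree-$1$ element of $\C[\X]$ is only a linear combination of decomposable tensors. One would hope to show that the degree reverse lexicographic leading term of such a combination agrees with that of a single decomposable summand, reducing the vertex analysis to the decomposable case. Establishing this will require a careful study of how a Chevalley basis of $V_\varpi$ sits inside the tensor-product Chevalley basis of $\bigotimes_j V_{\fwt[j]}^{\otimes c_j}$, and is precisely where the conjectured decomposition behaves as a polytopal shadow of the Littlewood-Richardson rule: the decomposition of tensor products of irreducibles should mirror the polyhedral Minkowski decomposition of the Chevalley polytope. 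Verifying this compatibility, in conjunction with the conjectured Khovanskii property of Chevalley bases in Section \ref{sec:general}, is the crux of the proof and the source of the most delicate combinatorics.
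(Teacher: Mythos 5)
This statement is Conjecture~\ref{conj:decomposition} in the paper, and the paper does \emph{not} prove it: the authors present it as an open conjecture, explicitly noting that they ``plan to investigate it in upcoming work,'' and the only supporting evidence given is the informal Littlewood--Richardson heuristic following the conjecture and the computational verifications in Appendix~\ref{sec:examples} (for $\Fl(1,3;4)$ and $\OG(2,5)$, where they explicitly compute the Minkowski sums and confirm equality). There is therefore no ``paper's own proof'' against which to compare your attempt, and your sketch should be judged as a proposed strategy rather than checked against a reference argument.

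As a strategy sketch, what you have written is sensible and aligns with the authors' own heuristic: factoring $\X\hookrightarrow\bP(V_\varpi)$ through the diagonal in $\prod_j\G/\P_j$ followed by the Segre--Veronese map and the Cartan projection onto $V_\varpi\subset\bigotimes_j V_{\fwt[j]}^{\otimes c_j}$ is exactly the geometric incarnation of the remark after the conjecture, and the subword identification via $\weylp=\bigcap_j W_{\P_j}$ and the subword property of Bruhat order is the right way to set up the ambient coordinate identifications. You also correctly identify that additivity of the degrevlex valuation gives you the containment of the Minkowski sum inside $\cP_{\X,\varpi,\bs}$ on decomposable tensors (modulo the nonvanishing of the Cartan projections of those products, which you should not take for granted). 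However, you have not supplied the reverse inclusion, and you yourself flag it as the crux. That step is a genuine gap, not a routine verification: a degree-$1$ element of $\C[\X]$ corresponding to a non-extremal weight vector is generically a nontrivial linear combination in the tensor algebra, and you would have to rule out cancellation of the would-be leading terms coming from the individual decomposable summands. The appendix examples (notably the $\OG(2,5)$ computation of $a$, $b$, $c$ in the zero weight space, and the $\LGF_4$ and $\LGC_4$ examples where only some reduced expressions work) show that exactly this kind of cancellation bookkeeping is delicate and expression-dependent. An alternative route you might consider for the reverse inclusion is a volume argument as in Theorem~\ref{thm:nobody}: since $\cP_{\X,\varpi,\bs}$ sits inside the Newton--Okounkov body of volume $\deg(\X)$, it would suffice to compute the volume of the Minkowski sum and match it to $\deg(\X)$, reducing the problem to a (still nontrivial) purely polyhedral question about mixed volumes of the summand polytopes. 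In its current form your proposal is an outline of a plausible attack, not a proof, and you should not present it as resolving the conjecture.
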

This conjecture is reminscent of the Littlewood-Richardson rule in that it identifies the irreducible representation $V_\varpi$ as the ``leading term'' in the decomposition of the tensor product $\bigotimes_j V_{\fwt[j]}^{\otimes c_j}$ into irreducible representations. (The other irreducible representations in the decomposition of the tensor product come from polytopes defined by subsets of the vertices of $\cP_{\X,\varpi,\bs}$.)

We conclude with a brief outline. In Section \ref{sec:prelims}, we fix notations and conventions, and we recall relevant Lie-theoretic and order-theoretic background as well as Khovanskii bases and Newton-Okounkov bodies. We then define our valuations and Chevalley polytopes in Section \ref{sec:construction}. In Section \ref{sec:minuscule}, we study Chevalley polytopes in the case when $\X\hookrightarrow\bP(V_{\fwt[k]})$ is minuscule in its minimal embedding and show that the minuscule Chevalley polytopes are Newton-Okounkov bodies for $\X$ and also that the Pl\"ucker coordinates form a Khovanksii basis for $\C[\X]$. We then conjecture analogous results for the general case in Section \ref{sec:general}, which we plan to investigate in future work. In Section \ref{sec:string}, we compare our construction with string polytopes and give several examples where our polytopes possess better combinatorial properties. We conclude by presenting evidence for our conjectures with a number of further examples in Appendix \ref{sec:examples}.
\section{Background and Conventions} 
\label{sec:prelims}
In this section, we will fix our conventions and notation and also summarize necessary background and results. Unless otherwise stated, we will always use the notation and conventions fixed below. We begin with Lie-theoretic background in Section \ref{sec:conventions}, followed by combinatorial details of minuscule representations and the corresponding minuscule posets in \ref{sec:minuscule_background}. We briefly introduce order polytopes in Section \ref{sec:order_polytopes} and derive some basic corollaries we will need to study minuscule Chevalley polytopes. Finally, we introduce Khovanskii bases and Newton-Okounkov bodies in \ref{sec:khovanskii+no}.

\subsection{Conventions and notation} 
\label{sec:conventions}
Let $\G$ be a simple and simply-connected complex algebraic group of rank $n$ and fix a maximal torus $\torus$ and opposite Borel subgroups $\borelp,\borelm\supset\torus$, yielding decompositions $\borelp=\torus\unip$ and $\borelm=\torus\unim$ for opposite unipotent subgroups $\unip,\unim$. The parabolic subgroups $\P$ of $\G$ (those subgroups which contain $\borelp$) are in one-to-one correspondence with subsets $I\subset[n]$, where we write $[n]=\{1,\dots,n\}$, and the parabolic Weyl subgroup $\weylp$ of the Weyl group $\weyl$ of $\G$ is generated by all simple reflections $s_i$ with $i\notin I$. We write $\wo$ for the longest element of $\weyl$ and $\wP$ for the minimal coset representative of $\wo\weylp\in\weyl/\weylp$; its length is denoted $\ellwP=\ell(\wP)$. We write $\dunimP$ for the unipotent cell $\dunimP = \unim\cap\borelp\wP\borelp \subset\G$ (for some choice of representative in $\G$ of $\wP$). Note that in the previous works \cite{Spacek_Wang_exceptional, Spacek_Wang_cominuscule} we considered unipotent cells $\dunimP$ inside of the Langlands dual group $\dG$, while in this paper we will instead consider them inside of $\G$; for the same reason, we take $\dunimP$ to be the cell corresponding to $\wP$ instead of the cell corresponding to $(\wP)^{-1}$.

Let $\roots$ and $ \droots$ respectively denote the roots and coroots of $\G$ with respect to $\torus$, and let $\{ \sr_1, \ldots, \sr_n\} \subset\roots$ denote the base of simple roots for $\G$ fixed by the choice of $\borelp$ with corresponding basis of fundamental weights $\{\fwt[1],\dots, \fwt[n]\}$. Writing $\g$ and $\cartan$ for the Lie algebras of $\G$ and $\torus$ respectively, we find the root space decomposition $\g=\cartan\op\bigoplus_{\sr\in\roots}\g_\sr$, and we fix Chevalley generators $(\Che_i,\Chh_i,\Chf_i)_{i\in[n]}$ for $\g$, where $\Che_i\in\g_{\sr_i}$, $\Chf_i\in\g_{-\sr_i}$ and $\Chh_i=[\Che_i,\Chf_i]\in\cartan$. The Chevalley generators define one-parameter subgroups
\[
\x_i(a) = \exp(a\,\Che_i)\in\unip \qand \y_i(a) = \exp(a\Chf_i)\in\unim \qfor a\in\C.
\]
Since we assume $\G$ is simply-connected, all fundamental representations of $\g$ give rise to fundamental representations of $\G$, and we will frequently (and implicitly) work with representations of $\g$ and $\G$ interchangeably. For each reduced expression $\bs= s_{r_{\ellwP}}\cdots s_1$ for $\wP$, we define the open, dense algebraic torus
\begin{equation}
\opendunim = \{y_{r_{\ellwP}}(a_{\ellwP})\cdots y_{r_1}(a_1)~|~a_i\in\C^*\}~~\subset~~\dunimP,    
\label{eq:df_opendunim_red_exp}
\end{equation}
which is parametrized by the \emph{toric coordinates} $a_1,\dots, a_{\ellwP}\in\C^*$. In general, the torus $\opendunim$ depends on the particular choice of reduced expression $\bs$. However, in the case that two adjacent simple reflections commute $s_{r_{i+1}}s_{r_i} = s_{r_i}s_{r_{i+1}}$, the same holds for the corresponding Chevalley generators $\Chf_{r_{i+1}}\Chf_{r_i} = \Chf_{r_i}\Chf_{r_{i+1}}$, so we find that $\y_{r_{i+1}}(a_{i+1})\y_{r_i}(a_i) = \y_{r_i}(a_i)\y_{r_{i+1}}(a_{i+1})$, and hence that $\opendunim$ is invariant under commutations in $\bs$ up to a permutation of its coordinates. Because of this invariance, we note that $\opendunim$ in fact only depends on the \emph{heap} $\sH_{\bs}$ associated to $\bs$, which is the poset with elements $\pb_{\ellwP},\dots,\pb_1$ and partial ordering $\pb_i > \pb_j$ if $i<j$ and $s_is_j\neq s_js_i$. Note that by definition of $\sH_{\bs}$, there is a natural correspondence between the elements $\pb_i\in\sH_{\bs}$ and simple reflections $s_{r_i}$ in $\bs$ and we label each $\pb\in\sH_{\bs}$ with the index of the corresponding simple reflection ${r_i}$, written as $\indx(\pb)=r_i$. Thus, we can rewrite \eqref{eq:df_opendunim_red_exp} as
\[
\opendunim = \left\{\left.\tprod[_{\pb\in\sH_{\bs}}] y_{\indx(\pb)}(a_\pb)~\right|~a_\pb\in\C^*\right\},
\]
where the product is ordered left-to-right in increasing order (i.e. with the largest element on the right) using any linear extension of the partial order on $\sH_{\bs}$. 

We will consider homogeneous spaces $\X=\G/\P$ as projective varieties using the embeddings $\iota:\X\hookrightarrow\bP(V_\varpi)$, where $\varpi$ is a dominant integral weight of $\G$ whose corresponding highest weight line in the irreducible $\G$-representation $V_\varpi$ with highest weight $\varpi$ is stabilized by $\P$. Explicitly, the embedding of $\X=\G/\P$ is given by the action on the highest weight line $g\P\mapsto g\P\cdot\C v_\varpi$ and yields a closed embedding of $\X$ into $\bP(V_\varpi)$. Letting $I\subset[n]$ denote the subset of indices corresponding to $\P$, the special case when $\varpi=\sum_{i\in I}\fwt$ is the minimal embedding of $\X$, sometimes called the \emph{(generalized) Pl\"ucker embedding}, especially when $|I|=1$. 

These embeddings are all projectively normal, which will be a useful technical condition in studying toric degenerations of these spaces. To see this, we note that the global sections of $\cO(i)$ on $\bP(V_\varpi)$ can be identified with $\Sym^i(V_\varpi)$ while the global sections of the pullback $\iota^*(\cO(i))$ to $\X$ can be identified with $V_{i\varpi}$ by the Borel-Weil-Bott theorem. The latter is an irreducible $\G$-representation and the corresponding restriction morphism $\Sym^i(V_\varpi)\rightarrow V_{i\varpi}$ is nonzero, so must be surjective for any $i$.

Finally, any weight basis $\cB$ for $V_\varpi$ induces a set of coordinates on $\X$ which form a set of algebra generators for the homogeneous coordinate ring $\C[\X]$ as well as a basis of the homogeneous degree-$1$ part of $\C[\X]$. To describe the coordinates coming from a weight basis explicitly, let $\cB^*$ denote the dual basis to $\cB$ and $v_\mu\in\cB$ be a weight vector; then we obtain the corresponding coordinate function $p_{v_\mu}$ via the projection formula
\begin{equation}
\label{eq:coordinate}
    p_{v_\mu}(gP) = v_\mu^*(gP\cdot\C v_\varpi).
\end{equation}
While this construction is useful for producing a set of algebra generators of $\C[\X]$, the choice of a weight basis $\cB$ is generally far from canonical unless $V_\varpi$ is minuscule. There are various bases for such representations in the literature, and we will be interested in bases arising as follows. Let $\boldf$ denote a monomial $\Chf_{i_\ell}^{d_\ell}\cdots \Chf_{i_1}^{d_1}$ in the Chevalley generators $\Chf_i$. Then we call any basis $\cB$ for $V_\varpi$ obtained as a subset of $\{\boldf\cdot v_\varpi~|~\boldf\cdot v_\varpi\neq0\}$ a \emph{Chevalley basis for $V_\varpi$}. 

\subsection{Minuscule representations and minuscule posets}
\label{sec:minuscule_background}
A fundamental weight $\fwt[k]$ is \emph{minuscule} if $\llan\fwt[k],\sdr\rran\in\{-1,0,+1\}$ for any coroot $\sdr\in\droots$, where $\llan\cdot,\cdot\rran$ denotes the dual pairing between the character and cocharacter lattices of $\torus$. When $\P=\P_k$ is the parabolic subgroup stabilizing the highest weight line of $V_{\fwt[k]}$, the homogeneous space $\X=\G/\P$ is often called \emph{minuscule}. In Table \ref{tab:CominusculeSpaces}, we have listed all minuscule fundamental weights together with the associated homogeneous spaces.

\begin{table}[b!h]%
\[
\begin{array}{ccc|c}
\multicolumn{3}{c|}{\text{type of $\G$ and weight $\fwt[k]$}} & \text{variety $\G/\Pj[i]$} \vphantom{\dfrac MM}\\\hline
A_{n-1} & 
\begin{tikzpicture}[baseline = -0.75ex, scale = 0.625]
\coordinate (1) at (0,0);
\coordinate (2) at (0.625,0);
\coordinate (3) at (1.25,0);
\coordinate (3+) at (1.75,0);
\coordinate (n-) at (2.25,0);
\coordinate (n) at (2.75,0);
\draw[thick] (1)--(3+);
\draw[thick, dotted] (3+)--(n-);
\draw[thick] (n-)--(n);
\draw[black,fill=black] (1) circle (3pt); 
\draw[black,fill=black] (2) circle (3pt);
\draw[black,fill=black] (3) circle (3pt);
\draw[black,fill=black] (n) circle (3pt);
\end{tikzpicture}
&\text{any }k & \Gr(k,n) \vphantom{\dfrac MM}\\
B_n & 
\begin{tikzpicture}[baseline = -0.75ex, scale = 0.625]
\coordinate (1) at (0,0);
\coordinate (2) at (0.625,0);
\coordinate (2+) at (1.125,0);
\coordinate (n-1-) at (1.625,0);
\coordinate (n-1) at (2.125,0);
\coordinate (n) at (3.125,0);
\draw[thick] (1)--(2+);
\draw[thick, dotted] (2+)--(n-1-);
\draw[thick] (n-1-)--(n-);
\draw[thick,double,double distance = 1pt] (n-1)--(n);
\draw[black,fill=white] (1) circle (3pt); 
\draw[black,fill=white] (2) circle (3pt);
\draw[black,fill=white] (n-1) circle (3pt);
\draw[black,fill=black] (n) circle (3pt);
\node at (2.625,0) {$\boldsymbol{>}$};
\end{tikzpicture}
& n & \OG(n,2n+1) \vphantom{\dfrac MM}\\
C_n & 
\begin{tikzpicture}[baseline = -0.75ex, scale = 0.625]
\coordinate (1) at (0,0);
\coordinate (2) at (0.625,0);
\coordinate (2+) at (1.125,0);
\coordinate (n-1-) at (1.625,0);
\coordinate (n-1) at (2.125,0);
\coordinate (n) at (3.125,0);
\draw[thick] (1)--(2+);
\draw[thick, dotted] (2+)--(n-1-);
\draw[thick] (n-1-)--(n-);
\draw[thick,double,double distance = 1pt] (n-1)--(n);
\draw[black,fill=black] (1) circle (3pt); 
\draw[black,fill=white] (2) circle (3pt);
\draw[black,fill=white] (n-1) circle (3pt);
\draw[black,fill=white] (n) circle (3pt);
\node at (2.625,0) {$\boldsymbol{<}$};
\end{tikzpicture}
& 1 & \CP^{2n-1}\vphantom{\dfrac MM}\\
D_n & 
\begin{tikzpicture}[baseline = -0.75ex, scale = 0.625]
\coordinate (1) at (0,0);
\coordinate (2) at (0.625,0);
\coordinate (2+) at (1.125,0);
\coordinate (n-2-) at (1.625,0);
\coordinate (n-2) at (2.125,0);
\coordinate (n-1) at (2.75,0.2);
\coordinate (n) at (2.75,-0.2);
\draw[thick] (1)--(2)--(2+);
\draw[thick, dotted] (2+)--(n-2-);
\draw[thick] (n-2-)--(n-2)--(n-1);
\draw[thick] (n-2)--(n);
\draw[black,fill=black] (1) circle (3pt); 
\draw[black,fill=white] (2) circle (3pt);
\draw[black,fill=white] (n-2) circle (3pt);
\draw[black,fill=white] (n-1) circle (3pt);
\draw[black,fill=white] (n) circle (3pt);
\end{tikzpicture}
& 1 & Q_{2n-2}\vphantom{\dfrac MM}\\
D_n & 
\begin{tikzpicture}[baseline = -0.75ex, scale = 0.625]
\coordinate (1) at (0,0);
\coordinate (2) at (0.625,0);
\coordinate (2+) at (1.125,0);
\coordinate (n-2-) at (1.625,0);
\coordinate (n-2) at (2.125,0);
\coordinate (n-1) at (2.75,0.2);
\coordinate (n) at (2.75,-0.2);
\draw[thick] (1)--(2)--(2+);
\draw[thick, dotted] (2+)--(n-2-);
\draw[thick] (n-2-)--(n-2)--(n-1);
\draw[thick] (n-2)--(n);
\draw[black,fill=white] (1) circle (3pt); 
\draw[black,fill=white] (2) circle (3pt);
\draw[black,fill=white] (n-2) circle (3pt);
\draw[black,fill=black] (n-1) circle (3pt);
\draw[black,fill=black] (n) circle (3pt);
\end{tikzpicture}
& n-1\text{ or }n  & \OG(n,2n)\vphantom{\dfrac MM}\\
E_6 & 
\begin{tikzpicture}[baseline = 0.25ex, scale = 0.625]
\coordinate (1) at (0,0);
\coordinate (3) at (0.625,0);
\coordinate (4) at (1.25,0);
\coordinate (5) at (1.875,0);
\coordinate (6) at (2.5,0);
\coordinate (2) at (1.25,0.625);
\draw[thick] (1)--(3)--(4)--(5)--(6);
\draw[thick] (4)--(2);
\draw[black,fill=black] (1) circle (3pt); 
\draw[black,fill=white] (2) circle (3pt);
\draw[black,fill=white] (3) circle (3pt);
\draw[black,fill=white] (4) circle (3pt);
\draw[black,fill=white] (5) circle (3pt);
\draw[black,fill=black] (6) circle (3pt);
\end{tikzpicture}
& 1\text{ or }6 & \OP^2 = \LGE^\SC_6/P_6 \vphantom{\dfrac MM}\\
E_7 & 
\begin{tikzpicture}[baseline = 0.25ex, scale = 0.625]
\coordinate (1) at (0,0);
\coordinate (3) at (0.625,0);
\coordinate (4) at (1.25,0);
\coordinate (5) at (1.875,0);
\coordinate (6) at (2.5,0);
\coordinate (7) at (3.125,0);
\coordinate (2) at (1.25,0.625);
\draw[thick] (1)--(3)--(4)--(5)--(6)--(7);
\draw[thick] (4)--(2);
\draw[black,fill=white] (1) circle (3pt); 
\draw[black,fill=white] (2) circle (3pt);
\draw[black,fill=white] (3) circle (3pt);
\draw[black,fill=white] (4) circle (3pt);
\draw[black,fill=white] (5) circle (3pt);
\draw[black,fill=white] (6) circle (3pt);
\draw[black,fill=black] (7) circle (3pt);
\end{tikzpicture}
& 7 & \LGE^\SC_7/P_7 \vphantom{\dfrac MM}
\end{array}
\]
\caption{(Adapted from \cite{Spacek_Wang_cominuscule}.) All minuscule fundamental weights $\fwt[k]$ and the corresponding minuscule spaces $\G/\P_k$.}
\label{tab:CominusculeSpaces}
\end{table}

While general representations do not admit canonical choices of weight bases (even up to scaling), minuscule representations $V_{\fwt[k]}$ have the special property that all weight spaces are one-dimensional, so for minuscule representations there is a canonical choice of weight basis up to (global) scaling, and the resulting projective coordinates are often called \emph{(generalized) Pl\"ucker coordinates} on $\X$. Explicitly, $V_{\fwt[k]}$ has a highest weight line $\C v_{\fwt[k]}$ and for any weight vector $v_\mu$, there is a corresponding (generalized) Pl\"ucker coordinate $p_{v_\mu}$ defined by equation \eqref{eq:coordinate}.

Before going further, we will need some basic order-theoretic concepts. First, an \emph{order ideal} of a poset $\sP$ is a subset $\sI\subset\sP$ such that $x,y\in\sP$ with $y\in\sI$ and $x\le y$ implies that $x\in \sI$. Dually, a \emph{filter} of a poset $\sP$ is a subset $\sF\subset\sP$ such that $x,y\in\sP$ with $x\in\sF$ and $x\le y$ implies that $y\in \sF$. For any $x,y\in \sP$, the \emph{join} $x\vee y$ (if it exists) is the least upper bound of $x$ and $y$ in $\sP$ and dually the \emph{meet} $x\wedge y$ (if it exists) is the greatest lower bound of $x$ and $y$ in $\sP$. If $x\in \sP$ cannot be written as the join of elements distinct from itself, then we say that $x$ is \emph{join-irreducible}. The \emph{dual poset} $\sP^*$ of a poset $\sP$ is the poset on the elements of $\sP$ obtained by reversing the order on $\sP$. A poset $\sP$ is called a \emph{lattice} if every $x,y\in\sP$ have a meet and join and a lattice is called \emph{distributive} if the meet and join distribute over each other. Birkhoff showed in \cite{birkhoff_representation} that distributive lattices can be identified with the set of order ideals of their posets of join-irreducible elements. 

Returning to minuscule representations, Proctor showed in \cite{proctor_bruhat_lattices} that the poset of weights of a minuscule representation $V_{\fwt[k]}$ using the Bruhat order (i.e.~$\mu_1>\mu_2$ if and only if $\mu_1-\mu_2$ is a positive linear combination of simple roots) is a distributive lattice. The associated poset of join-irreducible elements of the weight lattice of a minuscule representation is often called a \emph{minuscule poset}, which we denote by $\minposet$. Furthermore, the dual representation $V_{\fwt[k]}^*$ is a minuscule representation of highest weight $\fwt[\sigma_0(k)]$, where $\sigma_0(k):[n]\rightarrow[n]$ is the permutation defined by the action of $\wo$ on fundamental weights: $\fwt[\sigma_0(i)] = -\wo\cdot\fwt$. The corresponding minuscule poset $\ideal^{\P_{\sigma_0(k)}}$ is the dual poset to $\minposet$, so thus we obtain identifications of weights of $V_{\fwt[k]}$ with order ideals of $\minposet$ or equivalently with order ideals of $\ideal^{\P_{\sigma_0(k)}} = (\minposet)^*$, which are dually filters of $\minposet$. Thus, we may equivalently consider weights of the minuscule representation $V_{\fwt[k]}$ as filters of $\minposet$. Since minuscule posets can all be drawn as skew Young diagrams, we may thus label weights of minuscule representations (and hence also Pl\"ucker coordinates of minuscule spaces) by skew Young diagrams which fit inside a certain shape. 

It is easy to see that a weight for $V_{\fwt[k]}$ is join-irreducible when there is exactly one simple reflection lowering the weight. Hence, each element $\pb\in\minposet$ corresponds to the simple reflection that lowers the corresponding weight, and we label each $\pb\in\minposet$ by its corresponding simple reflection. We will write $\indx(\pb)=i$ for the index of the simple reflection corresponding to $\pb\in\minposet$. With this labeling, we note that the heap $\sH_{\bs}$ associated to $\bs$ is identical to the order ideal $\minposet$ for any reduced expression $\bs$ for $\wP$ due to a result of  Stembridge in \cite[Theorem 6.1]{Stembridge_Fully_commutative_elements_of_Coxeter_groups} which shows that all minimal coset representative of the parabolic quotient $\weyl/\weylp$ are \emph{fully commutative} when $\weylp$ stabilizes a minuscule weight.

\begin{ex}\label{ex:list_of_minposets}
We listed all the minuscule weights in Table \ref{tab:CominusculeSpaces}, and here we provide representative examples of the corresponding minuscule posets (adapted from \cite[Example 2.18]{Spacek_Wang_cominuscule}).\begin{equation*}
\label{eq:young_diagrams}
\Yboxdim{7pt}
\begin{array}{c}
\begin{array}{c||c|c|c|c|c|c|c}
\raisebox{40pt}{$\minposet$}&
\raisebox{40.5pt}{\scriptsize\young(3456,2345,1234)} &
\raisebox{27pt}{\scriptsize\young(5,45,345,2345,12345)}& 
\raisebox{27pt}{\scriptsize\young(12345,::::4,::::3,::::2,::::1)}& 
\raisebox{27pt}{\scriptsize\young(12345,:::64,::::3,::::2,::::1)}& 
\raisebox{27pt}{\scriptsize\young(6,45,346,2345,12346)}& 
\raisebox{33.5pt}{\scriptsize\young(65431,::243,:::542,:::65431)}& 
\raisebox{0pt}{\scriptsize\young(765431,:::243,::::542,::::65431,::::76543,:::::::24,::::::::5,::::::::6,::::::::7)}
\vspace{-2.8em}\\
\text{type} & A_6 & B_5 & C_5 & D_6 & D_6 & E_6 & E_7 \\
k           & 3   & 5   & 1   & 1   & 6   & 6   & 7
\end{array}
\end{array}
\end{equation*}
Note that choosing $k=n-1$ in type $\LGD_n$ switches the labels $n-1$ and $n$ along the main diagonal of the staircase partition, and that choosing $k=1$ in type $\LGE_6$ yields the dual poset.
\end{ex}

The correspondence between weights of $V_{\fwt[k]}$ and filters of $\minposet$ gives us another way to index the natural weight basis of $V_{\fwt[k]}$, namely $\{v_\sF~|~\sF\subset\minposet~\textrm{is a filter}\}$. Under this labeling, we note that the highest weight vector is labeled by the empty filter $\varnothing$. We thus label (generalized) Pl\"ucker coordinates similarly as
\[
p_{\sF}(gP) = v_{\sF}^*(gP\cdot\C v_\varnothing).
\]

\subsection{Order Polytopes}
\label{sec:order_polytopes}
In \cite{stanley_poset_poly}, Stanley defined two natural polytopes associated to any poset, the \emph{order polytope} and the \emph{chain polytope}. In this article, we will work solely with the order polytope, though the chain polytope is equally important in its own right.

\begin{df}[{\cite[Definition 1.1]{stanley_poset_poly}}]
    For a poset $\sP$, the \emph{order polytope} $\cO_{\sP}\subset \R^{\sP}$ (with coordinates indexed by elements of $\sP$) is defined as the set $\{(p_x)_{x\in \sP}\in[0,1]^{\sP}\}$ intersected with the half spaces
    \begin{align*}
        \{ p_x\le p_y\} \quad\text{for each pair}\quad x,y\in \sP\quad\text{with}\quad x\le y.
    \end{align*}
\end{df}
In the following, we will frequently identify points $p\in\cO_{\sP}$ of order polytopes with the corresponding functions $p:\sP\rightarrow\R$ given by $p(x) = p_x$. 

Many properties of order polytopes can be derived from the corresponding posets. In particular, we will need the following characterizations of the vertices of order polytopes. Recall that a \emph{filter} $\sF$ of a poset is an upward-closed set, i.e. if $x\in\sF$ and $x\le y$, then $y\in \sF$ as well. 

\begin{prop}[{\cite[Corollary 1.3]{stanley_poset_poly}}]
\label{prop:order-vertices}
For a poset $\sP$, the vertices of the order polytope $\cO_{\sP}$ are exactly given by the indicator vectors of filters of $\sP$.
\end{prop}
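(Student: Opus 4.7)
The plan is to establish both inclusions via a co-area style decomposition of points in $\cO_{\sP}$.

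For the easy direction, I first verify that the indicator vector $\chi_{\sF}$ of any filter $\sF\subset\sP$ lies in $\cO_{\sP}$: its entries are in $\{0,1\}$, and for $x\le y$ with $\chi_{\sF}(x)=1$ the filter condition forces $y\in\sF$, so $\chi_{\sF}(y)\ge\chi_{\sF}(x)$; the other cases are immediate. Since each coordinate of $\chi_{\sF}$ is an extreme point of $[0,1]$, any representation $\chi_{\sF}=tp+(1-t)q$ with $p,q\in\cO_{\sP}\subset[0,1]^{\sP}$ and $t\in(0,1)$ forces $p_x=q_x=\chi_{\sF}(x)$ coordinatewise, so $p=q=\chi_{\sF}$ and $\chi_{\sF}$ is a vertex.

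For the converse I would show that every point of $\cO_{\sP}$ is a convex combination of indicator vectors of filters; any vertex is then forced to be such an indicator itself. Fix $v\in\cO_{\sP}$ and for each $t\in[0,1]$ consider the super-level set $\sF_t=\{x\in\sP : v(x)\ge t\}$. This is a filter because the defining inequality $v(x)\le v(y)$ for $x\le y$ propagates membership upwards: if $x\in\sF_t$ and $x\le y$, then $v(y)\ge v(x)\ge t$. Let $0\le a_1<a_2<\cdots<a_k\le 1$ be the distinct values taken by $v$, and set $a_0=0$. A telescoping check, evaluating both sides at any $x\in\sP$ with $v(x)=a_j$ and summing $\sum_{i=1}^{j}(a_i-a_{i-1})=a_j$, gives the identity
\[
v \;=\; (1-a_k)\,\chi_{\varnothing} \;+\; \sum_{i=1}^{k}(a_i-a_{i-1})\,\chi_{\sF_{a_i}},
\]
whose coefficients are nonnegative and sum to $1$, i.e.\ a convex decomposition into indicator vectors of filters (including the empty filter). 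If $v$ were a vertex but not an indicator of a filter, this decomposition would realize it as a nontrivial convex combination, a contradiction.

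The main obstacle is bookkeeping rather than ideas: verifying the telescoping identity at each $x$ by summing the relevant differences $a_i-a_{i-1}$ up to the index at which $v(x)=a_j$, and checking that the boundary cases $a_1=0$ or $a_k=1$ are harmlessly absorbed into vanishing coefficients. The argument is essentially a discrete layer-cake representation $v(x)=\int_0^1\chi_{\{v\ge t\}}(x)\,dt$ combined with the observation that super-level sets of order-preserving functions on $\sP$ are filters; no tools beyond the definition of $\cO_{\sP}$ are required.
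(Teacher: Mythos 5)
Your proof is correct. Note first that the paper does not actually prove this proposition: it imports it verbatim from Stanley's \emph{Two Poset Polytopes} (Corollary 1.3), so there is no internal argument to compare against.

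Your two directions are both sound. For the inclusion ``indicators of filters are vertices,'' you use the general and clean observation that any $\{0,1\}$-vector lying in a polytope contained in $[0,1]^{n}$ is automatically an extreme point, together with a routine check that $\mathbf{1}_{\sF}$ satisfies the order inequalities when $\sF$ is a filter. For the converse, the discrete layer-cake decomposition is exactly the right tool: super-level sets $\sF_t=\{x : v(x)\ge t\}$ of an order-preserving $v$ are filters, and your telescoping identity
\[
v \;=\; (1-a_k)\,\mathbf{1}_{\varnothing} \;+\; \sum_{i=1}^{k}(a_i-a_{i-1})\,\mathbf{1}_{\sF_{a_i}}
\]
is a genuine convex combination of indicator vectors of a nested chain of filters, which certifies that every point of $\cO_{\sP}$ lies in the convex hull of such indicators. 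One small sharpening at the end: since some coefficients here may vanish (e.g.\ if $a_1=0$ or $a_k=1$), the cleanest way to close is to observe that if $v$ is a vertex, then every indicator appearing with \emph{positive} weight must equal $v$, hence $v=\mathbf{1}_{\sF_{a_i}}$ for some $i$; your phrasing (``nontrivial convex combination'') implicitly assumes at least two distinct indicators have positive weight, which does hold whenever $v$ is not already a $\{0,1\}$-vector, but it is worth stating the one-line extreme-point argument explicitly. For context, the chain $\sF_{a_1}\supseteq\cdots\supseteq\sF_{a_k}$ you produce is the same structure underlying Stanley's canonical triangulation of $\cO_{\sP}$ by linear extensions; you are using the decomposition directly to read off vertices rather than to triangulate, which is an elementary and self-contained route.
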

Where we recall that indicator vectors are the vectors of the values of indicator functions.

More generally, Stanley describes the \emph{Ehrhart polynomials} of order polytopes, where we recall that for $m\in\Z_{>0}$, the Ehrhart polynomial of a polytope $\cO_\sP$ evaluated at $m$ is defined to be the number of lattice points of the dilation $m\cO_{\sP}$.

\begin{prop}[{\cite[Theorem 4.1]{stanley_poset_poly}}]
\label{prop:order-ehrhart}
The number of lattice points in $m\cO_{\sP}$ for $m\in\Z_{>0}$ is given by the number of poset morphisms $\eta:\sP\rightarrow[m+1]$, where $[m+1]$ is considered as a poset via the standard ordering on integers.
\end{prop}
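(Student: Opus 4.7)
The plan is to exhibit a direct bijection between lattice points of $m\cO_\sP$ and poset morphisms $\sP\to[m+1]$, after which the statement becomes essentially a reformulation of the definition of the order polytope.

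First, I would unpack what it means to be a lattice point of $m\cO_\sP$. Since $\cO_\sP$ is cut out of $[0,1]^\sP$ by the homogeneous inequalities $p_x\le p_y$ for $x\le y$ in $\sP$, the dilation $m\cO_\sP$ is cut out of $[0,m]^\sP$ by exactly the same inequalities. Consequently, intersecting with $\Z^\sP$, a lattice point of $m\cO_\sP$ is precisely a function $p:\sP\to\{0,1,\dots,m\}$ satisfying $p(x)\le p(y)$ whenever $x\le y$ in $\sP$.

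Next, I would set up the bijection with poset morphisms. Given a lattice point $p$ as above, define $\Phi(p):\sP\to[m+1]$ by $\Phi(p)(x)=p(x)+1$; the shift by $1$ sends the codomain $\{0,\dots,m\}$ bijectively onto $[m+1]=\{1,\dots,m+1\}$ and obviously preserves the inequalities, so $\Phi(p)$ is a poset morphism with $[m+1]$ given the standard ordering. Conversely, for any poset morphism $\eta:\sP\to[m+1]$, define $\Psi(\eta)(x)=\eta(x)-1$; this lands in $\{0,\dots,m\}$ and satisfies the required inequalities, hence gives a lattice point of $m\cO_\sP$. The maps $\Phi$ and $\Psi$ are mutually inverse by construction, which establishes the bijection and hence the claimed count.

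There is no substantial obstacle; the only point worth emphasizing is that the defining inequalities of $\cO_\sP$ are homogeneous in the coordinates, so scaling by $m$ preserves their form and lets the lattice points of $m\cO_\sP$ be identified cleanly with $\{0,\dots,m\}$-valued order-preserving functions. The proof is then a one-line shift of indices, and the result follows with no further input.
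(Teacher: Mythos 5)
Your proof is correct. The paper does not actually prove this proposition; it simply cites it as Theorem 4.1 of Stanley's ``Two poset polytopes,'' treating it as known. Your argument---identifying lattice points of $m\cO_\sP$ with order-preserving maps $\sP\to\{0,\dots,m\}$ via the homogeneity of the defining inequalities, then shifting by $1$ to land in $[m+1]$---is exactly the standard argument and in fact the substance of Stanley's own proof, so there is nothing to compare beyond noting that you have supplied the proof the paper omits. One small remark: your bijection is structurally identical to the one the paper does carry out explicitly in the special case $m=1$ in the proof of Corollary \ref{cor:no_lattice_pts}, and your decomposition into indicator functions is what the paper uses in Corollary \ref{cor:idp}; so your general-$m$ argument cleanly subsumes both of those later computations.
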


Using this theorem, Stanley computes the volumes of order polytopes.

\begin{prop}[{\cite[Corollary 4.2]{stanley_poset_poly}}]
\label{prop:stanley_volume}
For a poset $\sP$, the (normalized) volume of $\cO_\sP$ is equal to the number of linear extensions of $\sP$, where normalized volume is defined such that a standard $d$-simplex has normalized volume $\frac{1}{d!}$.
\end{prop}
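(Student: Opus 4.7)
The plan is to construct an explicit triangulation of $\cO_\sP$ whose top-dimensional simplices are indexed by the linear extensions of $\sP$, with each simplex affinely equivalent to the standard $d$-simplex (where $d=|\sP|$). Since the standard $d$-simplex has normalized volume $\tfrac{1}{d!}$, summing over the $e(\sP)$ linear extensions will directly yield the claimed equality. An alternative route would extract the leading coefficient of the Ehrhart polynomial from Proposition~\ref{prop:order-ehrhart}, but the triangulation argument is cleaner and additionally produces an explicit unimodular triangulation that is useful for subsequent applications.

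For each linear extension $\omega\colon \sP \to [d]$, define the chain simplex
\[
\Delta_\omega \;=\; \bigl\{f \in [0,1]^\sP \;\big|\; f(\omega^{-1}(1)) \le f(\omega^{-1}(2)) \le \cdots \le f(\omega^{-1}(d))\bigr\}.
\]
Since $\omega$ is a linear extension, $x \le_\sP y$ implies $\omega(x) \le \omega(y)$, so the chain of inequalities defining $\Delta_\omega$ yields $f(x) \le f(y)$ on $\Delta_\omega$; hence $\Delta_\omega \subseteq \cO_\sP$. Under the coordinate change $t_i = f(\omega^{-1}(i))$, the simplex $\Delta_\omega$ is identified with the standard chain simplex $\{0 \le t_1 \le \cdots \le t_d \le 1\}$ and therefore has normalized volume $\tfrac{1}{d!}$.

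The main step is to show that the simplices $\{\Delta_\omega\}$ cover $\cO_\sP$ and have pairwise disjoint relative interiors. For any $f \in \cO_\sP$, I would sort the elements of $\sP$ in non-decreasing order of $f$-value, breaking ties between $\le_\sP$-incomparable elements using any fixed auxiliary linear extension; the resulting bijection $\omega_f\colon \sP \to [d]$ is itself a linear extension (since $x <_\sP y$ forces $f(x) \le f(y)$ and incomparable elements admit any relative ordering), and by construction $f \in \Delta_{\omega_f}$. Conversely, in the relative interior of $\Delta_\omega$ all defining inequalities are strict, so the values $f(\omega^{-1}(i))$ are distinct and $\omega$ is uniquely recovered from $f$. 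The main subtlety to verify is that the tie-breaking step is always compatible with $\le_\sP$, which is immediate since incomparable elements admit arbitrary relative ordering. Summing the volumes of the $\Delta_\omega$ over all linear extensions then yields the claim.
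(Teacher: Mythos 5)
Your proof is correct, and it takes a different route from the one the paper implicitly relies on: the paper provides no proof of its own and simply cites Stanley's Corollary~4.2, which Stanley derives from the Ehrhart polynomial computation (reproduced in this paper as Proposition~\ref{prop:order-ehrhart}) by reading off the leading coefficient. Your argument instead constructs the canonical triangulation of $\cO_\sP$ into chain simplices $\Delta_\omega$ indexed by linear extensions; this is equally classical (it is essentially Stanley's canonical triangulation) and has the advantage of being self-contained and of producing an explicit unimodular triangulation, which as you note is genuinely useful. The Ehrhart route, by contrast, yields the full order polynomial and not merely its leading term, and the paper actually needs those finer lattice-point counts for Corollaries~\ref{cor:no_lattice_pts} and~\ref{cor:idp}; that is presumably why the authors lead with Proposition~\ref{prop:order-ehrhart} and treat the volume statement as a corollary of it.

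One small imprecision to tighten in the covering step: you say ties in $f$-value are broken ``between $\le_\sP$-incomparable elements'' by the auxiliary linear extension, but ties can also occur between \emph{comparable} elements on the boundary of $\cO_\sP$ (i.e.\ $x <_\sP y$ with $f(x)=f(y)$). The fix is already implicit in your setup: break \emph{all} ties using the fixed auxiliary linear extension. Since that auxiliary order is itself a linear extension of $\sP$, it automatically places $x$ before $y$ whenever $x <_\sP y$, so the resulting total order $\omega_f$ respects $\le_\sP$ and is a linear extension. With that clarification, the covering argument and the disjoint-relative-interiors argument are both complete, and summing $\tfrac{1}{d!}$ over the $e(\sP)$ linear extensions gives the stated volume.
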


We obtain another corollary that order polytopes have no interior lattice points.

\begin{cor}
    \label{cor:no_lattice_pts}
    The order polytope $\cO_{\sP}$ has no lattice points other than its vertices. 
\end{cor}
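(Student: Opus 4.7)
The plan is to observe that the defining inequalities of the order polytope force every lattice point to have coordinates in $\{0,1\}$, and then to match such $0/1$-vectors with filters using the order constraints, so that Proposition \ref{prop:order-vertices} finishes the argument.

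More explicitly, I would first note that by definition $\cO_{\sP}\subseteq[0,1]^{\sP}$, so any lattice point $p\in\cO_{\sP}\cap\Z^{\sP}$ must satisfy $p_x\in\{0,1\}$ for every $x\in\sP$. Consequently $p$ is the indicator vector $\mathbf{1}_S$ of the subset $S=\{x\in\sP\mid p_x=1\}\subseteq\sP$.

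Next I would check that $S$ must be a filter: if $x\le y$ in $\sP$ and $x\in S$, then the inequality $p_x\le p_y$ defining $\cO_{\sP}$ forces $1=p_x\le p_y$, so $p_y=1$ and $y\in S$. Hence $S$ is upward-closed, i.e.\ a filter of $\sP$. Conversely, the indicator vector of any filter clearly satisfies all inequalities $p_x\le p_y$ for $x\le y$ and lies in $[0,1]^{\sP}$, so it is a lattice point of $\cO_{\sP}$. Applying Proposition \ref{prop:order-vertices}, which identifies the vertices of $\cO_{\sP}$ with indicator vectors of filters of $\sP$, we conclude that every lattice point of $\cO_{\sP}$ is a vertex.

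There is no real obstacle here: the argument is essentially a one-line observation that the $\{0,1\}$-constraint built into the definition of $\cO_{\sP}$ collapses lattice points onto filters, which Proposition \ref{prop:order-vertices} has already identified with the vertex set. The only thing worth being careful about is noting that the ambient box $[0,1]^{\sP}$ is part of the definition of $\cO_{\sP}$ (not merely a consequence of the inequalities $p_x\le p_y$), so that the initial reduction to $\{0,1\}$-coordinates is valid.
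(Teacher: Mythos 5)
Your proof is correct, but it takes a genuinely different and more direct route than the paper. You argue pointwise: the box constraint $\cO_{\sP}\subseteq[0,1]^{\sP}$ forces every lattice point to be a $0/1$-vector, the order constraints $p_x\le p_y$ for $x\le y$ force the support of such a vector to be upward-closed (a filter), and Proposition~\ref{prop:order-vertices} then identifies every lattice point with a vertex. The paper instead invokes Proposition~\ref{prop:order-ehrhart}: it sets up an explicit bijection between filters and poset morphisms $\sP\to\{1,2\}$, uses the Ehrhart count to deduce that the number of lattice points of $\cO_{\sP}$ equals the number of morphisms $\sP\to[2]$, and hence equals the number of vertices. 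The paper's approach has the virtue of fitting into the Ehrhart/morphism framework it immediately reuses for Corollary~\ref{cor:idp}; your approach is shorter, avoids the counting step entirely, and actually gives a direct structural identification of each lattice point rather than just an equality of cardinalities. Both are complete proofs; your observation that the $[0,1]^{\sP}$ bound is part of the definition (and not a consequence of the order inequalities alone) is exactly the right thing to flag, since without it the reduction to $\{0,1\}$-coordinates would fail.
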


\begin{proof}
    By Proposition \ref{prop:order-vertices}, the vertices of $\cO_{\sP}$ are exactly given by indicator vectors (and hence indicator functions) of filters of $\sP$. Furthermore, there is a natural bijection mapping a filter $\sF\subset\sP$ to the morphism $\eta_{\sF}:\sP\rightarrow\{1,2\}$ defined by setting $\eta_{\sF} = \mathbf{1}_{\sF}+1$ for $\mathbf{1}_{\sF}$ the indicator function of $\sF\subset\sP$. Inversely, for a morphism $\eta:\sP\rightarrow\{1,2\}$ we define the filter $\sF_{\eta}=\mathbf{1}_{\eta}^{-1}(1)$ via the indicator function $\mathbf{1}_{\eta} = \eta-1$. It is straightforward to check that these maps are well-defined and mutually inverse and hence bijections, so that the number of morphisms $\sP\rightarrow[2]$ is equal to the number of vertices of $\cO_{\sP}$. 

    By Proposition \ref{prop:order-ehrhart}, the number of morphisms $\sP\rightarrow[2]$ is equal to the number of lattice points of (the trivial dilation of) $\cO_{\sP}$, so we conclude that the number of lattice points of $\cO_{\sP}$ is equal to the number of vertices of $\cO_{\sP}$, so $\cO_{\sP}$ has no lattice points other than its vertices.
\end{proof}

We obtain one final corollary of (the proof of) Proposition \ref{prop:order-ehrhart} that order polytopes satisfy the \emph{integer decomposition property (IDP)}, meaning that any lattice point in the m$th$ dilation of an order polytope $\cO_{\sP}$ can be written as a sum of $m$ lattice points of $\cO_{\sP}$. Note that since order polytopes have no lattice points other than vertices, we in fact show that every lattice point in $m\cO_{\sP}$ can in fact be written as a sum of $m$ vertices of $\cO_{\sP}$. 

\begin{cor}
    \label{cor:idp}
    The order polytope $\cO_{\sP}$ satisfies the integer decomposition property. 
\end{cor}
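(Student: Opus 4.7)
The plan is to produce, given any lattice point $q$ of $m\cO_{\sP}$, an explicit decomposition $q = \sum_{i=1}^{m} \mathbf{1}_{\sF_i}$ into indicator functions of filters $\sF_1,\dots,\sF_m$ of $\sP$, which by Proposition \ref{prop:order-vertices} are exactly vertices of $\cO_{\sP}$. First I would translate along the identification already used in the proof of Corollary \ref{cor:no_lattice_pts}: a lattice point of $m\cO_{\sP}$ is the same as a monotone function $p:\sP\rightarrow\{0,1,\dots,m\}$ (equivalently, under $\eta=p+1$, a poset morphism $\sP\rightarrow[m+1]$ as in Proposition \ref{prop:order-ehrhart}).

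The key construction is the chain of level sets: for each $i\in\{1,\dots,m\}$, set
\[
\sF_i \;=\; \{\,x\in \sP \,\mid\, p(x)\ge i\,\}.
\]
These are filters, since $x\in\sF_i$ and $x\le y$ force $p(y)\ge p(x)\ge i$. Then for any $x\in \sP$ we have $\sum_{i=1}^{m}\mathbf{1}_{\sF_i}(x) = \#\{\,i\,\mid\,p(x)\ge i\,\} = p(x)$, so $p=\sum_{i=1}^m \mathbf{1}_{\sF_i}$ as functions on $\sP$, giving the required decomposition as a sum of $m$ vertices of $\cO_{\sP}$.

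There is no real obstacle here; the only thing to check carefully is the bookkeeping that $\sF_i$ is genuinely upward-closed and that the pointwise sum of indicator functions recovers $p$. As a minor aesthetic point, one could observe that the resulting filters are automatically nested, $\sF_1\supseteq\sF_2\supseteq\cdots\supseteq\sF_m$, which corresponds to the fact that the decomposition comes from a flag of filters; but this is not required for IDP and can be omitted. Combined with Corollary \ref{cor:no_lattice_pts}, the conclusion is the stronger statement that every lattice point of $m\cO_{\sP}$ is a sum of exactly $m$ vertices of $\cO_{\sP}$.
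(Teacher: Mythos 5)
Your construction coincides with the paper's: in both cases the decomposition is given by the nested level-set filters $\sF_i=\{x\in\sP\mid p(x)\ge i\}$, which the paper writes as $(\eta_{mp})^{-1}(\{i,\dots,m\})$, and the verification that their indicator functions sum to $p$ is the same counting argument. This is a correct proof taking essentially the same route as the paper.
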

\begin{proof}
    Let $mp\in m\cO_{\sP}$ be a lattice point of $m\cO_{\sP}$. Since $p\in\cO_{\sP}$, we have by definition that $0\le x\le 1$ for all $x\in \sP$, so that $0\le m(p_x)\le m$, and furthermore that $p_x<p_y$ for all $x<y$ in $\sP$ so that $mp$ can be identified with a poset morphism $\eta_{mp}:\sP\rightarrow\{0,\dots,m\}$. It follows immediately that (the function) $mp$ can be written as the sum of $m$ indicator functions of filters of $\sP$ via
    \[
    mp = \sum_{i=1}^m \mathbf{1}_{(\eta_{mp})^{-1}(\{i,i+1,\dots, m\})},
    \]
    showing that (the point) $mp$ can be written as a sum of $m$ (not necessarily distinct) vertices of $\cO_\sP$.
\end{proof}

Geometrically speaking, if a polytope satisfies the IDP, then the corresponding toric variety is projectively normal. 

\subsection{Khovanskii bases and Newton-Okounkov bodies} 
\label{sec:khovanskii+no}
Finally, we recall basic facts about \emph{Khovanskii bases} and \emph{Newton-Okounkov bodies} associated to \emph{valuations}. This subsection closely follows \cite[Section 5]{Spacek_Wang_exceptional}, so we only give a brief exposition. We denote by $A$ a domain that is also a finitely generated algebra over an algebraically closed field $k$ and by $\Gamma\subset \mathbb{Q}^r$ a subgroup with an ordering $\succ$ of its elements compatible with addition on $\Gamma$. 

\begin{df}[\cite{Kaveh_Manon_Khovanskii_bases}, Definition 2.1]
  A function $\nu:A\setminus\{0\}\rightarrow \Gamma$ is a \emph{valuation} (over $k$) if
  \begin{enumerate}
  \item for all $f,g\in A\setminus\{0\}$ with $f+g\neq0$, we have $\nu(f+g)\succeq \min_{\succ}(\nu(f),\nu(g))$,
  \item for all $f,g\in A\setminus\{0\}$, we have $\nu(fg)=\nu(f)+\nu(g)$, and
  \item for all $f\in A\setminus\{0\}$ and $0\neq c\in k$, we have $\nu(cf)=\nu(f)$.
  \end{enumerate}
  For $\ga\in\Gamma$, let $F_{\succeq\ga}=\{f\in A\setminus \{0\}\mid \nu(f)\succeq\ga\}\cup \{0\}$, and analogously for $F_{\succ\ga}$. The \emph{associated graded algebra} of $A$ with respect to the valuation $\nu$ is 
  \[
  \mathrm{gr}_\nu(A)=\bigoplus_{\ga\in\Gamma} F_{\succeq\ga}/F_{\succ\ga}.
  \]
  If for all $\ga\in\Gamma$, $\dim_k(F_{\succeq\ga}/F_{\succ\ga})\le 1$, then $\nu$ has \emph{one-dimensional leaves}. 
\end{df}

The one-dimensional leaves property is a useful combinatorial condition, and the following lemma provides a large family of valuations with this property. 

\begin{lem}[\cite{Spacek_Wang_exceptional}, Lemma 5.2]
  \label{lem:one-dim-leaves}
  Let $S\subset k[x_1,\dots, x_n]$ be a finitely generated subalgebra, and let $\nu$ be the valuation sending $p\in S$ to the exponent vector of its minimal term with respect to a term ordering on $k[x_1,\dots, x_n]$, then $\nu$ has one-dimensional leaves. 
\end{lem}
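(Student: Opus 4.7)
The plan is to first verify that $\nu$ satisfies the three valuation axioms, and then to establish the one-dimensional leaves property via a direct leading-term cancellation argument. For the valuation axioms, axiom (1) follows because the minimal term of $f+g$ has exponent vector $\succeq \min_\succ(\nu(f),\nu(g))$, with strict inequality possible only when the minimal terms of $f$ and $g$ cancel; axiom (2) holds because in a domain the product of the minimal terms of $f$ and $g$ is itself a term of $fg$ whose exponent vector is $\nu(f)+\nu(g)$, and the addition-compatibility of the term order on $\Z^n\subset\Q^n$ (which is precisely the defining property of a term order) ensures no other term of $fg$ is strictly smaller; axiom (3) is immediate since multiplication by a nonzero scalar from $k$ does not change the exponent of any term.

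For the one-dimensional leaves property, I would fix $\ga \in \Gamma$ and consider any two elements $f, g \in F_{\succeq \ga}$ whose classes in $F_{\succeq \ga}/F_{\succ \ga}$ are both nonzero. Then necessarily $\nu(f) = \nu(g) = \ga$, so the minimal terms of $f$ and $g$ take the form $c_f x^{\ga}$ and $c_g x^{\ga}$ for some $c_f, c_g \in k^*$. The key step is to form the $k$-linear combination $h = c_g f - c_f g$, which lies in $S$ since $S$ is a $k$-subalgebra. By construction, the coefficients of $x^\ga$ in $c_g f$ and $c_f g$ agree, so they cancel in $h$, leaving either $h = 0$ or $\nu(h) \succ \ga$; in either case $h \in F_{\succ \ga}$. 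Passing to the quotient gives $c_g [f] = c_f [g]$, so any two nonzero classes in $F_{\succeq \ga}/F_{\succ \ga}$ are $k$-proportional, proving $\dim_k(F_{\succeq\ga}/F_{\succ\ga}) \le 1$.

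The argument is essentially formal, and there is no significant obstacle to overcome: the proof rests entirely on the fact that two elements sharing the same minimal exponent vector can always be made to cancel in leading order by a single $k$-linear combination, together with the $k$-algebra structure of $S$ ensuring the combination remains inside $S$. The only subtlety worth flagging is the translation between term orders on monomials in $k[x_1,\dots,x_n]$ and addition-compatible orderings on the value group $\Z^n$, which however is automatic from the definition of a term order and so requires no separate argument.
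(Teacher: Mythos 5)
The paper itself gives no proof of this lemma; it is cited from \cite{Spacek_Wang_exceptional} (Lemma 5.2), with only the remark that the argument works for any term order, not just degree-lexicographic. Your proof is correct and is the standard argument, so it presumably matches what appears in the cited reference.

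One minor point worth flagging in your verification of axiom (2): you attribute the fact that the product of the minimal terms of $f$ and $g$ survives as a term of $fg$ to the domain property alone. The domain property only gives $c_f c_g \neq 0$; to see that this term does not cancel against other cross-products one also needs that $(\nu(f),\nu(g))$ is the \emph{unique} pair of exponents from $f$ and $g$ summing to $\nu(f)+\nu(g)$, which follows from additivity of the term order (if $a \succeq \nu(f)$, $b \succeq \nu(g)$, and $a+b = \nu(f)+\nu(g)$, then $a = \nu(f)$ and $b = \nu(g)$). You do invoke additivity in the very next clause to rule out strictly smaller terms, so all the ingredients are present; they are just slightly misattributed across the two clauses. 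The one-dimensional-leaves argument in your second paragraph is exactly right and is the heart of the lemma.
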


Although the lemma above was originally stated for the degree-lexicographic order, the proof applies equally well to any term ordering on $k[x_1,\dots, x_n]$ and we have stated the lemma as such. We now recall \emph{Khovanskii bases} associated to valuations, which are heavily reminiscent of \emph{Gr\"obner basis} associated to term orderings on polynomial rings. 

\begin{df}[\cite{Kaveh_Manon_Khovanskii_bases}, Definition 2.5]
  A (possibly infinite) subset $B\subset A$ is a \emph{Khovanskii basis} for $A$ with respect to the valuation $\nu$ if $\mathrm{gr}_{\nu}(B)$ is a set of algebra generators for $\mathrm{gr}_\nu(A)$. 
\end{df}

Khovanskii bases are generally worse-behaved than Gr\"obner bases, but we can study (some of) their properties through \emph{Newton-Okounkov bodies} associated to valuations in the case when $A$ has a $\Z_{\ge0}$-grading, which is sometimes called a \emph{positive grading}.

\begin{df}[\cite{Kaveh_Manon_Khovanskii_bases}, Definition 2.21] \label{def:no-body}
  Let $A$ be $\mathbb{Z}_{\ge0}$-graded. The \emph{Newton-Okounkov body} $\Delta(A,\nu)$ associated to a valuation $\nu$ is the closed convex set
  \[
  \Delta(A,\nu)=\overline{\mathrm{conv}\!\left(\bigcup_{i>0}\tfrac1i{\nu(A_i\setminus\{0\})}\right)},
  \]
  where $A_i$ denotes the set of homogeneous elements of $A$ of degree $i$. 
\end{df}

Although Newton-Okounkov bodies are generally far from polyhedral, the following proposition gives an important family of Newton-Okounkov bodies which are polytopal.

\begin{prop}[\cite{Kaveh_Manon_Khovanskii_bases}, Corollary 2.24, Remark 2.25]
  \label{prop:no-volume}
  Let $Y\subset\mathbb{P}^N$ be a projective variety, let $A=\C[Y]$ be the homogeneous coordinate ring of $Y$, and let $\nu$ be a valuation on $A$ with one-dimensional leaves. Then $\deg(Y)=\mathrm{Vol}(\Delta(A,\nu))$, where $\mathrm{Vol}$ refers to the normalized lattice volume (i.e.~such that a standard lattice $d$-simplex has volume $\frac{1}{d!}$). 
  
  Suppose furthermore that $A$ has a finite Khovanskii basis with respect to $\nu$. Then $\Delta(A,\nu)$ is a rational polytope and $Y$ admits a degeneration to a toric variety whose normalization is the toric variety associated to $\Delta(A,\nu)$.
\end{prop}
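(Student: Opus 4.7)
The plan is to establish both parts of the proposition by analyzing the value semigroup $S(A,\nu) = \{(i, \nu(f)) : 0 \neq f \in A_i,\, i > 0\} \subset \Z_{>0} \times \Gamma$ and exploiting the one-dimensional leaves hypothesis. The key observation is that for each fixed degree $i$, the leaves property provides a direct sum decomposition of $A_i$ indexed by the fiber of $S(A,\nu)$ over $i$ with each nonzero summand one-dimensional, so that $\dim_\C A_i = \#\{\gamma \in \Gamma : (i,\gamma) \in S(A,\nu)\}$. This identity converts Hilbert function asymptotics into lattice point counts in the dilations of the height-one slice of the cone generated by $S(A,\nu)$.

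For the first assertion, let $d = \dim Y$. The Hilbert polynomial $H_A(i) = \dim_\C A_i$ has leading term $\deg(Y) \cdot i^d / d!$ by the classical definition of projective degree. On the other hand, the Okounkov--Khovanskii--Kaveh asymptotics applied to the value semigroup give
\[
\lim_{i \to \infty} i^{-d} \cdot \#\{\gamma \in \Gamma : (i,\gamma) \in S(A,\nu)\} = \mathrm{Vol}(\Delta(A,\nu)),
\]
where the normalization convention on $\mathrm{Vol}$ is chosen precisely so that the standard $d$-simplex has volume $1/d!$. Equating leading coefficients of the two expressions for $\dim_\C A_i$ yields $\deg(Y) = \mathrm{Vol}(\Delta(A,\nu))$.

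For the second assertion, assume $A$ admits a finite Khovanskii basis $B$ with respect to $\nu$. Then $\mathrm{gr}_\nu(A)$ is generated by $\mathrm{gr}_\nu(B)$ and is isomorphic to the semigroup algebra $\C[S(A,\nu)]$; in particular $S(A,\nu)$ is a finitely generated subsemigroup of $\Z_{>0} \times \Gamma$, so the height-one slice $\Delta(A,\nu)$ of its rational cone is a rational polytope. To produce the degeneration, I would form the Rees-type algebra $\widetilde{A} = \bigoplus_{\gamma} F_{\succeq \gamma}\, t^{-\gamma} \subset A[t, t^{-1}]$, which is flat over $\C[t]$ with generic fiber $A$ and special fiber $\mathrm{gr}_\nu(A)$. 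Taking $\mathrm{Proj}$ produces a flat family over $\mathbb{A}^1$ degenerating $Y$ to $\mathrm{Proj}(\C[S(A,\nu)])$, whose normalization is the projectively normal toric variety associated to the saturation of $S(A,\nu)$, which is exactly the toric variety of the lattice polytope $\Delta(A,\nu)$.

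The main technical obstacles are twofold. First, establishing the asymptotic lattice-point count in the displayed equation in full generality requires carefully embedding the ordered abelian group $\Gamma$ into $\R^r$ via an order-preserving map compatible with $\succ$ in order to reduce to the well-studied setting of monomial orders on $\Z^r$ treated by Kaveh--Khovanskii. Second, verifying genuine flatness (rather than just coherence) of the Rees construction requires that each graded piece of $\widetilde{A}$ be a free $\C[t]$-module, which follows from compatibility of the valuation filtration with the positive grading on $A$ together with the finite generation supplied by the Khovanskii basis. Once these two technical points are handled, the remainder of the argument follows from standard toric geometry identifying $\mathrm{Proj}$ of a normal semigroup algebra with the toric variety of its height-one slice.
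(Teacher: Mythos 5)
This proposition is quoted directly from Kaveh--Manon (\cite{Kaveh_Manon_Khovanskii_bases}, Corollary 2.24, Remark 2.25) and the paper offers no proof of it, so there is no in-paper argument to compare against. Your reconstruction follows the standard Kaveh--Khovanskii/Kaveh--Manon line of reasoning: use one-dimensional leaves to identify $\dim_\C A_i$ with the number of value-semigroup elements over $i$, compare the Hilbert-polynomial leading coefficient with the lattice-point asymptotics of the Newton--Okounkov body, and use a Rees algebra to exhibit the degeneration to $\mathrm{Spec}$ (or $\mathrm{Proj}$) of the associated graded ring. That is exactly the architecture of the original proof, and your identification of the two technical obstacles (order-preserving embedding of $(\Gamma,\succ)$ into $\R^r$, and flatness of the Rees family via freeness of each graded piece over $\C[t]$) is accurate.

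There is, however, a normalization slip you should fix. You define $\mathrm{Vol}$ so that the standard $d$-simplex has volume $1/d!$, i.e.\ $\mathrm{Vol}$ is Euclidean volume. The asymptotic lattice-point count then gives $\lim_i i^{-d}\dim_\C A_i = \mathrm{Vol}_{\mathrm{Euc}}(\Delta)$, while the Hilbert polynomial gives $\dim_\C A_i \sim \deg(Y)\,i^d/d!$. Equating the two leading coefficients yields $\deg(Y) = d!\cdot\mathrm{Vol}_{\mathrm{Euc}}(\Delta)$, not $\deg(Y)=\mathrm{Vol}_{\mathrm{Euc}}(\Delta)$ as you wrote. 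The quantity that actually equals $\deg(Y)$ is the lattice-normalized volume, i.e.\ $d!$ times the Euclidean volume, under which the standard $d$-simplex has volume $1$. (You have faithfully reproduced a typo already present in the paper's own parenthetical description of ``normalized volume'' in both Proposition~\ref{prop:stanley_volume} and Proposition~\ref{prop:no-volume}; one can check from the chain poset of length $d$, whose order polytope is the standard simplex with one linear extension, that the intended convention must assign volume $1$ to the standard simplex.) Correcting your stated convention to normalized volume $= d!\cdot$Euclidean makes the final equality follow, and with that repair the argument is sound.
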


Under further assumptions, e.g. as in \cite[Proposition 17.4]{Rietsch_Williams_NO_bodies_cluster_duality_and_mirror_symmetry_for_Grassmannians}, we can obtain not only a toric degeneration to a toric variety whose normalization is the toric variety associated to the Newton-Okounkov body, but actually a toric degeneration to the toric variety of the Newton-Okounkov body itself, and we will be most interested in this particular case.
\section{Chevalley polytopes}
\label{sec:construction}

In this section we construct the valuation $\nu_{\X,\varpi,\bs}:\C[\X]\rightarrow\Z^{\ellwP}$ and Chevalley polytope $\cP_{\X,\varpi,\bs}$ associated to a triple $(\X,\varpi,\bs)$ consisting of a homogeneous space $\X=\G/\P$, a dominant weight $\varpi$ yielding a projective embedding $\X\hookrightarrow\bP(V_\varpi)$, and a choice of reduced expression $\bs$ for the minimal coset representative $\wP$ of the coset $\wo\weylp\in\weyl/\weylp$, where $\ellwP=\ell(\wP)=\dim(\X)$.

Recall from \ref{sec:conventions} that the heap $\sH_{\bs}$ associated to the reduced expression $\bs = s_{r_{\ellwP}}\cdots s_{r_1}$ for $\wP$ parametrizes coordinates on the torus $\Xo=\opendunim/\P\subset\X$ where
\[
\opendunim = \left\{\left.\tprod[_{\pb\in\sH_{\bs}}] \y_{\indx(\pb)}(a_{\pb})~\right|~ a_\pb\in\C^*\right\} \subset\G,
\]
and noting that $\Xo\cong\opendunim$ (see e.g. \cite[Remark 6.6]{Spacek_Wang_cominuscule} for a proof of this fact). We will use $\opendunim$ to construct a valuation $\nu_{\X,\varpi,\bs}$ by computing restrictions of $p\in \C[\X]$ to $\Xo$ and selecting the exponent vector of the degree reverse lexicographic (degrevlex) minimal term of $\C[a_\pb\mid\pb\in\sH_{\bs}]$, where the degrevlex ordering is defined with respect to the ordering of the coordinates $a_{\pb_{\ellwP}} > \cdots > a_{\pb_1}$ arising from the choice of $\bs$. We now explain how to compute the restrictions $p|_{\Xo}$. 

Since the coordinate ring $\C[\X]$ is generated as an algebra by coordinate functions as given in equation \eqref{eq:coordinate}, it suffices to compute the restrictions of coordinate functions corresponding to a weight basis of $V_\varpi$. These coordinate functions $p_{v_\mu}$ are $\P$-invariant since $\C v_\varpi$ is stabilized by $\P$, so we may equivalently consider $p_{v_\mu}$ as a function on $\G$ and compute the restrictions $p_{v_\mu}|_{\opendunim}$. 

In principle, in order to work concretely with elements $u_-\in\opendunim$, we need to consider the series expansions of each one-parameter subgroup
\[
\y_{i}(a_\pb) = \exp(a_\pb\Chf_{i}) = 1 + a_\pb\Chf_{i} + \tfrac12 a_\pb^2\Chf_{i}^2 + \ldots, \qwhere i=\indx(\pb)
\]
inside the completion of the universal enveloping algebra of the Lie algebra of $\unim$. However, since we only consider the action of $u_-$ on a fixed finite-dimensional (irreducible) representation $V_\varpi$, it suffices to truncate each one-parameter subgroup expansion to a sufficiently large, finite degree. 
Hence, each term of the expansion for $u_-\in\opendunim$ in the completed universal enveloping algebra of the Lie algebra of $\unim$ is a monomial $\Chf_{j_1}^{d_1}\cdots \Chf_{j_l}^{d_l}$ in the Chevalley generators $\Chf_i$ with coefficient a homogeneous polynomial in the torus coordinates $a_\pb$ of degree $(d_1+\cdots+d_l)$:
\begin{equation}
\label{eq:torus_expansion}
    u_- = 1 + \sum_{i} \left(\sum_{\pb} a_\pb\right)\Chf_i 
    + \sum_{i}\left(\sum_{\pb}\tfrac12a_\pb^2\right)\Chf_i^2+\sum_{j\neq i}\left(\sum_{\pb'<\pb}a_{\pb'}a_{\pb}\right)\Chf_j\Chf_i
    +\ldots
\end{equation}
where $i,j\in[n]$ and $\pb,\pb'\in\sH_{\bs}$ with $\indx(\pb)=i$ and $\indx(\pb')=j$, and
only a finite number of terms in this expansion act nontrivially on the representation.

Finally, let $p_{v_\mu}$ be a coordinate function on $\X$. It is straightforward to see that the restriction $p_{v_\mu}|_{\opendunim}$ will be a sum of the coefficients of those terms $\Chf_{j_1}^{d_1}\cdots \Chf_{j_l}^{d_l}$ of expansion \eqref{eq:torus_expansion} above such that $\Chf_{j_1}^{d_1}\cdots\Chf_{j_l}^{d_l}v_\varpi = v_\mu$. We summarize this discussion in the following proposition. 

\begin{prop}
\label{prop:torus_computation}
Let $\X$ be a homogeneous space with projective embedding $\X\hookrightarrow\bP(V_\varpi)$ and a choice of reduced expression $\bs$ for $\wP$. The restriction of $p\in \C[\X]$ to the torus $\Xo\subset\X$ is a polynomial in the coordinates $(a_\pb)_{\pb\in\sH_{\bs}}$ of $\Xo$. In particular, the restriction to $\Xo$ induces an injection $\C[\X]\hookrightarrow \C[a_\pb ~|~ \pb\in\sH_{\bs}]$.
\end{prop}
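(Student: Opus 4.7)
The plan is to observe that both assertions follow essentially immediately from the discussion preceding the proposition, combined with a standard density argument. For the polynomiality statement, I would first reduce to a set of algebra generators: since $\C[\X]$ is generated as a $\C$-algebra by the coordinate functions $p_{v_\mu}$ arising from any weight basis $\cB$ of $V_\varpi$ via \eqref{eq:coordinate}, multiplicativity of restriction reduces the claim to checking polynomiality of each $p_{v_\mu}|_{\opendunim}$. The preceding discussion has already identified this restriction as the sum of those coefficients in the expansion \eqref{eq:torus_expansion} of $u_-$ whose associated Chevalley monomial $\Chf_{j_1}^{d_1}\cdots\Chf_{j_l}^{d_l}$ satisfies $\Chf_{j_1}^{d_1}\cdots\Chf_{j_l}^{d_l} v_\varpi = v_\mu$. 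Each such coefficient is manifestly a polynomial in the toric coordinates $a_\pb$, and because $V_\varpi$ is finite-dimensional only finitely many monomials in the $\Chf_i$ act nontrivially on $v_\varpi$; hence the sum is finite and the restriction is a polynomial.

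For the injection, I would invoke the density of $\Xo$ in the irreducible projective variety $\X$. The cell $\dunimP = \unim \cap \borelp\wP\borelp$ projects onto a dense open Schubert cell of $\X$ under $\G \to \G/\P$ by the Bruhat decomposition, and $\opendunim$ is Zariski-dense in $\dunimP$, so $\Xo$ is dense in $\X$. For homogeneous $p \in \C[\X]$ of degree $d$, using the section $u_- \mapsto u_- \cdot v_\varpi$ to identify $p|_{\opendunim}$ with the restriction of $p$ (viewed as a degree-$d$ polynomial on $V_\varpi$) to $\opendunim \cdot v_\varpi$, vanishing of $p|_{\opendunim}$ in the toric coordinates forces $p$ to vanish on $\opendunim \cdot v_\varpi$. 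By $\P$-invariance of $\C v_\varpi$ together with homogeneity, this extends to vanishing on the full affine cone over $\X$, so $p = 0$ in $\C[\X]$. Decomposing a general element of $\C[\X]$ into its homogeneous components then yields injectivity.

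The main obstacle, such as it is, lies not in substance but in bookkeeping: one must carefully distinguish between elements of the homogeneous coordinate ring $\C[\X]$, regular functions on the affine cone over $\X$, and polynomials in the toric coordinates $a_\pb$, and verify that the section $u_- \mapsto u_- \cdot v_\varpi$ intertwines these descriptions compatibly with the algebra structure. One must also confirm that truncating each one-parameter subgroup $\y_i(a_\pb) = \exp(a_\pb \Chf_i)$ to sufficiently high finite degree (as needed for the action on $V_\varpi$) is harmless and does preserve the content of expansion \eqref{eq:torus_expansion}. Both are routine once the conventions are fixed, leaving finite-dimensionality of $V_\varpi$ and density of $\Xo$ in $\X$ as the essential inputs.
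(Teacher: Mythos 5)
Your proof is correct and matches the paper's approach: the paper attaches no explicit proof to this proposition, instead treating the preceding discussion (which identifies $p_{v_\mu}|_{\opendunim}$ with a finite sum of coefficients from the expansion \eqref{eq:torus_expansion}, exactly as you describe) as establishing polynomiality, with injectivity left implicit via density of $\Xo$ in the irreducible variety $\X$. Your filling-in of the density/homogeneity argument for the injection is a sound completion of what the paper leaves to the reader, though the appeal to $\P$-invariance in that step is superfluous — density of $\C^*\cdot(\opendunim\cdot v_\varpi)$ in the affine cone over $\X$ together with homogeneity of $p$ already suffices.
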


Using this, we can now define the valuation $\nu_{\X,\varpi,\bs}$ as well as the Chevalley polytope $\cP_{\X,\varpi,\bs}$. 

\begin{df}
\label{def:main_objects}
    Let $(\X,\varpi,\bs)$ be a triple consisting of $\X$ a homogeneous space, $\varpi$ a dominant weight yielding a projective embedding $\X\hookrightarrow\bP(V_\varpi)$ and $\bs$ a choice of reduced expression for $\wP$. We define the valuation $\nu_{\X,\varpi,\bs}:\C[\X]\rightarrow\Z^{\ellwP}$ by sending $p\in\C[\X]$ to the exponent vector of the degree reverse lexicographic minimal term of the restriction $p|_{\Xo}$ with respect to the ordering $a_{\pb_{\ellwP}}>\dots>a_{\pb_1}$ of torus coordinates corresponding to the reduced expression $\bs$. Furthermore, we define the \emph{Chevalley polytope} $\cP_{\X,\varpi,\bs}$ to be the convex hull of the valuation of the homogeneous degree-$1$ part of $\C[\X]$, i.e.
    \[
    \cP_{\X,\varpi,\bs} = \conv\Bigl(\bigl\{\nu_{\X,\varpi,\bs}(p)~\big|~ p\in\C[\X],~\deg(p)=1\bigr\}\Bigr)~~\subset~~ \R^{\ellwP}.
    \]
\end{df}

\begin{rem}
\label{rem:ordering}
    While the definition above uses the degrevlex order on $\C[a_\pb~|~\pb\in\sH_{\bs}]$ to define the valuation $\nu_{\X,\varpi,\bs}$ and hence $\cP_{\X,\varpi,\bs}$, it is possible to make the definition above with respect to any term order on $\C[a_\pb~|~\pb\in\sH_{\bs}]$. Our reason for using the degrevlex order is that we will provide a combinatorial interpretation of $\nu_{\X,\varpi,\bs}$ in the following sections, and in that context there will be a natural (partial) term order on $\C[a_\pb~|~\pb\in\sH_{\bs}]$ which can be extended to the degrevlex order. Thus, the particular choice of degrevlex is not important, and the only constraint in selecting a term order is that it must extend the partial order which we will describe below.
\end{rem}

\section{Minuscule Chevalley polytopes}
\label{sec:minuscule}
In this section we study Chevalley polytopes in the case when $\X=\G/\P_k$ is a minuscule homogeneous space in its minimal (Pl\"ucker) embedding $\X\hookrightarrow \bP(V_{\fwt[k]})$ with $\fwt[k]$ a minuscule fundamental weight, and $\bs$ is any choice of reduced expression for $\wP$, where we write $\P=\P_k$. We will exploit the additional combinatorial tools arising from the minusculity of $\X$ and $V_{\fwt[k]}$ to prove that the Chevalley polytope $\cP_{\X,\fwt[k],\bs}$ is a Newton-Okounkov body for $\X$ and that the Pl\"ucker coordinates form a Khovanskii basis for $\X$, both with respect to the valuation $\nu_{\X,\fwt[k],\bs}$.

When $\X$ is minuscule, as noted in Section \ref{sec:minuscule_background}, the heap $\sH_{\bs}$ is independent of the chosen reduced expression $\bs$ for $\wP$ and hence the torus $\opendunim$ is also independent of $\bs$ (up to permutations of its coordinates). Furthermore, the heap $\sH_{\bs}$ coincides with the minuscule poset $\minposet$ corresponding to $V_\varpi$, so we refer to the torus coordinates by $a_{\pb}$ for $\pb\in\minposet$. Thus, the valuation $\nu_{\X,\fwt[k],\bs}$ and Chevalley polytope $\cP_{\X,\varpi,\bs}$ also do not depend on the choice of reduced expressions (up to permutations of the coordinates), and in the remainder of this section we will suppress the dependence of the valuation and Chevalley polytope on $\bs$ and simply write $\nu_{\X,\fwt[k]}$ and $\cP_{\X,\fwt[k]}$. 

We observed in \cite[equation (2.13)]{Spacek_Wang_cominuscule} (and it follows from the expansion in equation \eqref{eq:torus_expansion} above when restricted to minuscule spaces $\X$ and minuscule representations $V_{\fwt[k]}$) that the restrictions to $\Xo$ of Pl\"ucker coordinates on $\X$ can be written in terms of labeled poset embeddings of filters $\sF\subset\minposet$ into $\minposet$:
\[
p_{\sF}|_{\Xo} = \tsum[_{\emb:\,\sF\hookrightarrow\minposet}] a_{\emb},
\]
where $a_{\emb}=\prod_{\pb\in\sF} a_{\emb(\pb)}$ is the (square-free) monomial in the torus coordinates of $\Xo$ whose support is $\emb(\sF)$. Note that we used filters of $\minposet$ here instead of order ideals as in the reference \cite{Spacek_Wang_cominuscule}. This change is due to conventional differences resulting from the use of left quotients $\P\backslash\G$ in our previous work along with embedding $\P\backslash \G$ into a projective space defined over a dual representation. Since the Pl\"ucker coordinates form a set of algebra generators for $\C[\X]$, the restriction map thus provides an injection $\C[\X]\hookrightarrow\C[a_\pb~|~ \pb\in\minposet]$. 

This combinatorial description of the restrictions of Pl\"ucker coordinates induces a natural partial term order on $\C[a_\pb~|~\pb\in\minposet]$ as follows. For a filter $\sF\subset\minposet$ and two labeled embeddings $\emb_1,\emb_2:\sF\hookrightarrow\minposet$, we write $\emb_1\le\emb_2$ (and equivalently that $a_{\emb_1}\le a_{\emb_2}$) if for each $\pb\in\sF$, we have that $\emb_1(\pb) \ge \emb_2(\pb)$ (note the reversal!). Under this partial order, the identity embedding $\id:\sF\hookrightarrow\minposet$ is the unique minimal embedding of $\sF$ into $\minposet$. It is straightforward to check that the degrevlex order on $\C[a_\pb~|~\pb\in\minposet]$ with ordering of coordinates  $a_{\pb_{\ellwP}} > \ldots > a_{\pb_1}$ corresponding to the choice of $\bs$ extends this partial ordering. With this interpretation, we see that the valuation $\nu_{\X,\fwt[k]}$ sends the Pl\"ucker coordinate $p_{\sF}\in\C[\X]$ to the indicator vector of the (identity embedding of the) filter $\sF\subset\minposet$, so we obtain the following description for minuscule Chevalley polytopes:
\[
\cP_{\X,\fwt[k]} = \conv\bigl(\{\mathbf{1}_\sF\in\R^{\ellwP} ~|~ \text{$\sF\subset\minposet$ is a filter} \}\bigr),
\]
where $\mathbf{1}_\sF\in\R^{\ellwP}$ denotes the indicator vector. Thus, Proposition \ref{prop:order-vertices} immediately yields an equality between $\cP_{\X,\fwt[k]}$ and the order polytope $\cO_{\minposet}$ since they have the same set of vertices by Proposition \ref{prop:order-vertices}. We summarize this discussion in the following proposition:

\begin{prop}
    \label{prop:order_polytope}
    The Chevalley polytope $\cO_{\X,\fwt[k]}$ is equal to the order polytope $\cO_{\minposet}$ of $\minposet$.
\end{prop}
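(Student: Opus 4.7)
The plan is to identify $\cP_{\X,\fwt[k]}$ with $\cO_{\minposet}$ by matching their vertex sets and invoking Proposition \ref{prop:order-vertices}. The heart of the argument is the claim that the valuation $\nu_{\X,\fwt[k]}$ sends each Pl\"ucker coordinate $p_\sF$ to the indicator vector $\mathbf{1}_\sF\in\Z^{\minposet}$.

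To establish this, I would first apply the combinatorial expansion $p_\sF|_{\Xo}=\sum_\emb a_\emb$ recalled just above the proposition, where the sum runs over labeled embeddings $\emb:\sF\hookrightarrow\minposet$. The identity embedding $\id$ contributes the square-free monomial $\prod_{\pb\in\sF}a_\pb$, whose exponent vector is precisely $\mathbf{1}_\sF$. Since $\id$ is the unique minimum of the natural partial order on embeddings and since the degrevlex order on $\C[a_\pb~|~\pb\in\minposet]$ extends this partial order, the monomial $a_{\id}$ is the degrevlex-minimal term of $p_\sF|_{\Xo}$, and therefore $\nu_{\X,\fwt[k]}(p_\sF)=\mathbf{1}_\sF$.

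Next, I would promote this description from Pl\"ucker coordinates to arbitrary elements of the homogeneous degree-$1$ part of $\C[\X]$. Because Pl\"ucker coordinates form a basis of this part and the indicator vectors $\mathbf{1}_\sF$ for distinct filters $\sF$ are pairwise distinct, no cancellation of degrevlex-minimal monomials can occur in a linear combination. Hence for any nonzero degree-$1$ element $p=\sum_\sF c_\sF p_\sF$, the vector $\nu_{\X,\fwt[k]}(p)$ equals $\mathbf{1}_{\sF_0}$ for the filter $\sF_0$ minimizing the degrevlex order on leading monomials among those $\sF$ with $c_\sF\neq 0$. Consequently $\{\nu_{\X,\fwt[k]}(p)\mid\deg p=1\}=\{\mathbf{1}_\sF\mid\sF\subset\minposet~\text{a filter}\}$, and $\cP_{\X,\fwt[k]}$ is the convex hull of these indicator vectors. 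Proposition \ref{prop:order-vertices} then identifies this convex hull with $\cO_{\minposet}$, since the two polytopes share the same vertex set.

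The main technical obstacle is the verification of the two combinatorial facts asserted in the discussion preceding the proposition: that the identity embedding is the unique minimum among labeled embeddings $\sF\hookrightarrow\minposet$ under the reversed coordinatewise partial order, and that the degrevlex order associated to $\bs$ extends this partial order. Both rest on the special structure of minuscule posets and the coincidence of $\sH_{\bs}$ with $\minposet$ coming from the full-commutativity of $\wP$; in the present proof they may be invoked as asserted facts, leaving the remainder as an unwinding of definitions combined with Proposition \ref{prop:order-vertices}.
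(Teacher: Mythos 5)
Your proposal is correct and follows essentially the same route as the paper: establish $\nu_{\X,\fwt[k]}(p_\sF)=\mathbf{1}_\sF$ using the uniqueness of the identity embedding together with the compatibility of degrevlex with the embedding partial order, then invoke Proposition \ref{prop:order-vertices}. You are slightly more careful than the paper in spelling out why the valuation of an arbitrary degree-$1$ element $p=\sum_\sF c_\sF p_\sF$ still lands among the indicator vectors $\mathbf{1}_\sF$ (no cancellation of the degrevlex-minimal term, since the leading monomials $a_\sF$ are pairwise distinct), a step the paper leaves implicit; this is a welcome addition but not a different argument.
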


Interpreting the minuscule poset $\minposet$ as a skew Young diagram, linear extensions of $\minposet$ can then be interpreted as standard Young tableaux on $\minposet$. The equivalence is straightforward: given a linear extension of $\minposet$, we obtain a standard Young tableau on $\minposet$ by labeling $\pb\in\minposet$ with its position in the linear extension and conversely given a standard Young tableau on $\minposet$, we obtain a linear extension by ordering the elements in increasing order corresponding to their standard Young tableau labels. While it seems to be known generally that the number of such standard Young tableaux on $\minposet$ is equal to the degree of $\X$, we were not able to locate a satisfactory reference outside of type $\LGA$ so we include a proof here.

\begin{lem}
\label{lem:linear_extensions_volume}
The number of linear extensions of $\minposet$ is equal to the degree of $\X\hookrightarrow\bP(V_{\fwt[k]})$.
\end{lem}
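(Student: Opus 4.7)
The plan is to identify both the degree of $\X \hookrightarrow \bP(V_{\fwt[k]})$ and the number of linear extensions of $\minposet$ with a common intermediate quantity---the number of reduced expressions of $\wP$---using classical intersection theory on one side and Stembridge's full-commutativity theorem on the other.

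On the geometric side, I would express $\deg(\X)$ as the top self-intersection $\int_{\X} c_1(\mathcal{L}_{\fwt[k]})^{\ellwP}$, where $\mathcal{L}_{\fwt[k]}$ denotes the pullback of $\mathcal{O}(1)$ under $\iota$. Iterated application of Chevalley's formula for multiplication by $c_1(\mathcal{L}_{\fwt[k]})$ in the Schubert basis of $H^*(\X)$ then expresses this intersection number as a weighted sum over maximal chains $e = u_0 < u_1 < \cdots < u_{\ellwP} = \wP$ in the Bruhat order on $\weyl/\weylp$, the weights being products of Chevalley pairings $\langle \fwt[k], \beta_i^\vee \rangle$ for the positive roots $\beta_i$ realizing each cover. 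Since $\fwt[k]$ is minuscule, these pairings take values in $\{-1, 0, +1\}$, and a standard fact about minuscule cosets is that covers in $\weyl/\weylp$ correspond exactly to pairings equal to $+1$. Thus every chain contributes weight $+1$, so $\deg(\X)$ equals the number of maximal Bruhat chains from $e$ to $\wP$, which by classical Coxeter theory coincides with the number of reduced expressions of $\wP$.

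On the combinatorial side, I would appeal to Stembridge's theorem \cite[Theorem 6.1]{Stembridge_Fully_commutative_elements_of_Coxeter_groups} (already recalled in Section \ref{sec:minuscule_background}) to conclude that $\wP$ is fully commutative, and then invoke the classical bijection between reduced expressions of a fully commutative element and linear extensions of its heap---together with the identification $\sH_{\bs} = \minposet$---to see that the number of reduced expressions of $\wP$ equals the number of linear extensions of $\minposet$. Combining the two sides gives the desired equality. The main technical obstacle will be verifying that no pairing along a maximal Bruhat chain of minimal coset representatives vanishes, which is the crucial use of the minuscule hypothesis and requires care in transitioning between Bruhat orders on $\weyl$ and $\weyl/\weylp$.
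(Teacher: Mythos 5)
Your proposal reaches the correct conclusion and shares its first step with the paper's proof---namely, the Chevalley-formula reduction of $\deg(\X)$ to the number of maximal Bruhat chains in $\weyl/\weylp$, using minusculity to see that all Chevalley coefficients equal $+1$ (the paper attributes this to \cite[Section 2.2]{CMP_Quantum_cohomology_of_minuscule_homogeneous_spaces}). After that, the two arguments diverge. The paper identifies the Bruhat order on $\weyl/\weylp$ with the distributive lattice of weights of $V_{\fwt[k]}$ (Proctor) and applies Stanley's result \cite[p.~198]{stanley_supersolvable} that maximal chains of a distributive lattice are in bijection with linear extensions of its poset of join-irreducibles, which here is $\minposet$ by construction. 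You instead go through reduced expressions of $\wP$ and Stembridge's bijection between reduced expressions of a fully commutative element and linear extensions of its heap. Both routes are valid and of comparable length; yours has the mild advantage of connecting more directly to the heap machinery used in Section \ref{sec:general}, while the paper's connects more directly to the weight-lattice combinatorics already set up in Section \ref{sec:minuscule_background}.

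There is, however, one imprecise attribution that hides a second essential use of the minuscule hypothesis. You assert that the number of maximal Bruhat chains from $e$ to $\wP$ in $\weyl/\weylp$ equals the number of reduced expressions of $\wP$ ``by classical Coxeter theory.'' This is false for general Coxeter groups and general parabolic quotients: maximal Bruhat chains correspond to reduced expressions only when Bruhat covers are realized by simple reflections, i.e.\ when the Bruhat order coincides with the weak order, and in general the Bruhat order has strictly more covers (already in $S_3$, the interval $[e,\wo]$ has four maximal Bruhat chains but only two reduced expressions). What rescues the argument is precisely Proctor's theorem that for minuscule parabolic quotients the Bruhat order on $\weyl/\weylp$ agrees with the weak order---which is the same theorem, in a different guise, that the paper invokes when it cites Proctor for distributivity. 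Your closing remark that the minuscule hypothesis is used ``only'' to ensure non-vanishing of the Chevalley pairings therefore slightly misdiagnoses the situation: minusculity is needed twice, once to make all Chevalley coefficients equal $+1$ and again to guarantee that maximal Bruhat chains read off reduced expressions. With that citation corrected, the proof goes through.
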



\begin{proof}
The degree of $\X$ can be computed as the number of points in the intersection of $\X$ with a generic codimension-$\ellwP$ hyperplane, where $\ellwP=\dim(X)=\ell(\wP)$. Equivalently, in cohomological terms, the degree of $\X$ can be computed as the the coefficient of the class of a point $\sigma_{\minposet}$ in the $\ellwP$th power of the hyperplane class $\sigma_{\Yboxdim{3pt}\yng(1)}$. As remarked in \cite[Section 2.2]{CMP_Quantum_cohomology_of_minuscule_homogeneous_spaces}, this quantity can be computed for minuscule $\X$ as the number of maximal chains (totally ordered subsets) in the poset of Schubert varieties of $\X$ between the hyperplane class $\sigma_{\Yboxdim{3pt}\yng(1)}$ and the class of a point $\sigma_{\minposet}$, using the \emph{Chevalley formula} for multiplication by the hyperplane class.

As shown e.g.~in \cite[V,\S3]{hiller}, the poset of Schubert varieties of $\X$ is isomorphic to the set of minimal coset representatives $\cosets = \weyl/\weylp$ with the Bruhat order. Since $V_{\fwt[k]}$ is a minuscule representation, the set of weights of $V_{\fwt[k]}$ forms a unique orbit under the action of the Weyl group $\weyl$ on the highest weight $\fwt[k]$. Furthermore, the highest weight $\fwt[k]$ is stabilized by the parabolic Weyl subgroup $\weylp$, so that we may identify the weight poset of $V_{\fwt[k]}$ with the set $\cosets$ of minimal coset representatives with the Bruhat order. Thus, we may compute the degree of $\X$ as the number of maximal chains in poset of weights of $V_{\fwt[k]}$. It is a standard fact in lattice theory (see e.g.~\cite[p. 198]{stanley_supersolvable}) that the number of maximal chains of a distributive lattice $\sL$ is exactly equal to the number of linear extensions of the associated poset $\sP$ of join-irreducible elements of $\sL$. In our case, this means that the number of maximal chains of the poset of weights of the minuscule representation $V_{\fwt[k]}$ is equal to the number of linear extensions of $\minposet$, from which we conclude immediately that $\deg(\X)$ is the number of linear extensions of $\minposet$. 
\end{proof}

This lemma provides a combinatorial description of the degree of $\X$ in its projective embedding $\X\hookrightarrow\bP(V_{\fwt[k]})$, and with this we can now relate the degree of $\X$ directly to the volume of the order polytope $\cO_{\minposet}$ and hence the volume of the Chevalley polytope $\cP_{\X,\fwt[k]}$. 

\begin{cor}
\label{cor:volumes}
The degree of $\X\hookrightarrow\bP(V_{\fwt[k]})$ is equal to the (normalized) volume of the Chevalley polytope $\cP_{\X,\fwt[k]}$.
\end{cor}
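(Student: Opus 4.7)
The plan is to chain together the three ingredients that have just been assembled, since the corollary is essentially the composite of results already in hand. First I would invoke Proposition \ref{prop:order_polytope} to replace the Chevalley polytope $\cP_{\X,\fwt[k]}$ with the order polytope $\cO_{\minposet}$ of the minuscule poset associated to $V_{\fwt[k]}$. This reduces the volume computation to a purely combinatorial statement about $\minposet$.

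Next I would apply Stanley's volume formula (Proposition \ref{prop:stanley_volume}) to conclude that the normalized volume of $\cO_{\minposet}$ equals the number of linear extensions of $\minposet$. Finally, Lemma \ref{lem:linear_extensions_volume} identifies this count with $\deg(\X)$ for the embedding $\X\hookrightarrow\bP(V_{\fwt[k]})$, closing the chain of equalities
\[
\deg(\X) \;=\; \#\{\text{linear extensions of }\minposet\} \;=\; \vol(\cO_{\minposet}) \;=\; \vol(\cP_{\X,\fwt[k]}).
\]

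There is no genuine obstacle here: the content of the corollary is entirely contained in the three prior results, and the proof amounts to citing them in the correct order. The only thing to be careful about is a uniform convention for normalized volume (so that ``standard $d$-simplex has volume $1/d!$'' is used consistently, matching Proposition \ref{prop:stanley_volume}); as long as this convention is fixed, no further argument is required.
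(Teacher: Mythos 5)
Your proposal matches the paper's proof exactly: both chain Proposition \ref{prop:order_polytope}, Proposition \ref{prop:stanley_volume}, and Lemma \ref{lem:linear_extensions_volume} in the same order, with no additional argument required. The remark about keeping the normalized-volume convention consistent is a sensible caution but already implicit in the paper's statements.
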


\begin{proof}
By Proposition \ref{prop:order_polytope}, $\cP_{\X,\fwt[k]}$ is the order polytope $\cO_{\minposet}$ of $\minposet$, so we have by Proposition \ref{prop:stanley_volume} that $\vol(\cP_{\X,\fwt[k]})$ is equal to the number of linear extensions of $\minposet$. By Lemma \ref{lem:linear_extensions_volume}, the number of linear extensions of $\minposet$ is equal to $\deg(\X)$, so the desired equalities follow immediately.
\end{proof}

It is clear from the definitions of $\cP_{\X,\fwt[k]}$ and $\Delta(\C[\X],\nu_{\X,\fwt[k]})$ that $\cP_{\X,\fwt[k]}\subset \Delta(\C[\X],\nu_{\X,\fwt[k]})$, since the vertices of $\cP_{\X,\fwt[k]}$ are the valuations of Pl\"ucker coordinates (which are homogeneous elements of $\C[\X]$ of degree $1$) and are thus contained in $\Delta(\C[\X],\nu_{\X,\fwt[k]})$. We now establish the reverse inclusion.

\begin{thm}
  \label{thm:nobody}
  Let $\X$ be a minuscule space in its minimal embedding $\X\hookrightarrow\bP(V_{\fwt[k]})$. The Newton-Okounkov body $\Delta(\C[\X],\nu_{\X,\fwt[k]})$ is equal to the Chevalley polytope $\cP_{\X,\fwt[k]}$.
\end{thm}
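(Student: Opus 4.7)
The plan is to prove the theorem by a volume argument, using the already-established containment $\cP_{\X,\fwt[k]}\subset\Delta(\C[\X],\nu_{\X,\fwt[k]})$ and showing both sets have the same volume $\deg(\X)$. The theorem then follows from standard convex-geometric considerations.

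First, I would observe that $\nu_{\X,\fwt[k]}$ has one-dimensional leaves: by Proposition \ref{prop:torus_computation}, restriction to $\Xo$ embeds $\C[\X]$ into the polynomial ring $\C[a_\pb\mid\pb\in\minposet]$, and $\nu_{\X,\fwt[k]}$ sends $p\in\C[\X]$ to the exponent vector of the degrevlex-minimal term of $p|_{\Xo}$, so Lemma \ref{lem:one-dim-leaves} applies. Consequently, Proposition \ref{prop:no-volume} yields
\[
\vol\bigl(\Delta(\C[\X],\nu_{\X,\fwt[k]})\bigr) = \deg(\X),
\]
where $\vol$ denotes the normalized lattice volume. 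On the other hand, Corollary \ref{cor:volumes} gives $\vol(\cP_{\X,\fwt[k]})=\deg(\X)$, so the two sets have equal (finite, positive) volume.

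To conclude, I would argue that $\cP_{\X,\fwt[k]}$ is a closed, full-dimensional convex subset of $\R^{\ellwP}$: by Proposition \ref{prop:order_polytope} it is the order polytope $\cO_{\minposet}$, which contains both the zero vector (from the empty filter) and the all-ones vector (from the filter $\sF=\minposet$) among its vertices, and more generally contains indicator vectors spanning all of $\R^{\ellwP}$. Since $\Delta(\C[\X],\nu_{\X,\fwt[k]})$ is closed and convex (by Definition \ref{def:no-body}) and contains the full-dimensional set $\cP_{\X,\fwt[k]}$, any point $x\in\Delta\setminus\cP_{\X,\fwt[k]}$ together with an interior point $y$ of $\cP_{\X,\fwt[k]}$ would, by convexity, produce a cone-shaped region outside of $\cP_{\X,\fwt[k]}$ contained in $\Delta$ and of strictly positive volume, contradicting the equality of volumes. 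Hence $\Delta(\C[\X],\nu_{\X,\fwt[k]})=\cP_{\X,\fwt[k]}$.

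The only subtle step is the last convex-geometric one, ruling out a ``thin'' extension of $\Delta$ beyond $\cP_{\X,\fwt[k]}$; this is where full-dimensionality of $\cP_{\X,\fwt[k]}$ is essential. The rest of the proof is essentially bookkeeping: identifying the Chevalley polytope with the order polytope (already done in Proposition \ref{prop:order_polytope}), translating linear extensions into a degree computation (Lemma \ref{lem:linear_extensions_volume} and Corollary \ref{cor:volumes}), and invoking the general NO-body volume formula.
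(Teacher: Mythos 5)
Your proposal matches the paper's proof essentially line for line: establish one-dimensional leaves via Proposition~\ref{prop:torus_computation} and Lemma~\ref{lem:one-dim-leaves}, apply Proposition~\ref{prop:no-volume} to get $\vol(\Delta)=\deg(\X)$, invoke Corollary~\ref{cor:volumes} for $\vol(\cP_{\X,\fwt[k]})=\deg(\X)$, note the containment $\cP_{\X,\fwt[k]}\subset\Delta$, and conclude from equality of volumes of nested closed convex sets. The only difference is that you spell out the elementary convex-geometry step (the cone argument, using full-dimensionality of $\cP_{\X,\fwt[k]}$ via its positive volume) which the paper states without elaboration; this is a welcome clarification but not a different approach.
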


\begin{proof}
    Restriction to $\Xo\subset\X$ induces an injection $\C[\X]\hookrightarrow\C[a_\pb~|~\pb\in\minposet]$ which identifies $\C[\X]$ with a finitely generated subalgebra of a polynomial ring and $\nu_{\X,\fwt[k]}$ with the valuation sending an element to its degrevlex minimal term. Thus, by Lemma \ref{lem:one-dim-leaves}, we obtain that $\nu_{\X,\fwt[k]}$ has one-dimensional leaves, so Proposition \ref{prop:no-volume} implies that the normalized volume of the associated Newton-Okounkov body $\Delta(\C[\X],\nu_{\X,\fwt[k]})$ is equal to the degree of $\X$.

    By definition of $\Delta(\C[\X],\nu_{\X,\fwt[k]})$, the rescaled valuation of any homogeneous element of $\C[\X]$ is contained in the Newton-Okounkov body $\Delta(\C[\X],\nu_{\X,\fwt[k]})$, and therefore the same holds for the convex hull of the rescaled valuations of any finite subset of homogeneous elements of $\C[\X]$. Thus, $\cP_{\X,\fwt[k]}\subset\Delta(\C[\X],\nu_{\X,\fwt[k]})$. Finally, Corollary \ref{cor:volumes} implies that $\vol(\cP_{\X,\fwt[k]})=\deg(\X)$ and since both $\cP_{\X,\fwt[k]}$ and $\Delta(\C[\X],\nu_{\X,\fwt[k]})$ are closed convex sets, we obtain $\cP_{\X,\fwt[k]} = \Delta(\C[\X],\nu_{\X,\fwt[k]})$.
\end{proof}

In particular, $\Delta(\C[\X],\nu_{\X,\fwt[k]})$ is a lattice polytope (in fact a $0/1$-polytope) with vertices given the valuations of Pl\"ucker coordinates on $\X$. Moreover, it now follows immediately from Corollary \ref{cor:no_lattice_pts} that $\Delta(\C[\X],\nu_{\X,\fwt[k]})$ also has no lattice points other than vertices.

\begin{cor}
    \label{cor:NObody_no_lattice_pts}
    $\Delta(\C[\X],\nu_{\X,\fwt[k]})$ has no lattice points other than its vertices.
\end{cor}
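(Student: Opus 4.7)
The plan is to reduce the statement to the known fact, established in Corollary \ref{cor:no_lattice_pts}, that order polytopes contain no lattice points other than their vertices. All the necessary identifications have been made in the preceding results, so the proof is essentially a short chain of equalities.

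First I would invoke Theorem \ref{thm:nobody}, which provides the identification
\[
\Delta(\C[\X],\nu_{\X,\fwt[k]}) = \cP_{\X,\fwt[k]}.
\]
Next, I would apply Proposition \ref{prop:order_polytope}, which gives $\cP_{\X,\fwt[k]} = \cO_{\minposet}$, where $\minposet$ is the minuscule poset associated to $\X$. Composing these, the Newton-Okounkov body is equal to the order polytope $\cO_{\minposet}$.

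The conclusion then follows immediately from Corollary \ref{cor:no_lattice_pts}, which asserts that $\cO_{\sP}$ has no lattice points other than its vertices for any poset $\sP$, applied with $\sP = \minposet$. Since Proposition \ref{prop:order-vertices} further identifies these vertices as indicator vectors of filters of $\minposet$, and since these are precisely the valuations of the Pl\"ucker coordinates on $\X$ by the combinatorial description recalled before Proposition \ref{prop:order_polytope}, one obtains as a bonus observation that the lattice points of $\Delta(\C[\X],\nu_{\X,\fwt[k]})$ are exactly the valuations of Pl\"ucker coordinates. There is no genuine obstacle in this argument; the work has all been done upstream, and this corollary is simply the combinatorial payoff of the identification of the Newton-Okounkov body with an order polytope.
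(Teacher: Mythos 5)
Your proof is correct and follows the same route as the paper: invoke Theorem~\ref{thm:nobody} to identify the Newton-Okounkov body with the Chevalley polytope, then apply Corollary~\ref{cor:no_lattice_pts} (via the identification with the order polytope $\cO_{\minposet}$ from Proposition~\ref{prop:order_polytope}). You are slightly more explicit about the intermediate step and the Pl\"ucker-coordinate interpretation of the lattice points, but the argument is identical in substance.
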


\begin{proof}
    By Theorem \ref{thm:nobody}, we have $\Delta(\C[\X],\nu_{\X,\fwt[k]})=\cP_{\X,\fwt[k]}$, but the latter has no lattice points other than its vertices by Corollary \ref{cor:no_lattice_pts}, so the desired statement follows immediately.
\end{proof}

Finally, we conclude this section by showing that the Pl\"ucker coordinates on $\X$ form a Khovanskii basis for $\C[\X]$ with respect to $\nu_{\X,\fwt[k]}$.

\begin{thm}
    \label{thm:plucker-khovanskii}
    Let $\X$ be a minuscule space in its minimal embedding $\X\hookrightarrow\bP(V_{\fwt[k]})$. The Pl\"ucker coordinates on $\X$ form a Khovanskii basis for $\C[\X]$ with respect to the valuation $\nu_{\X,\fwt[k]}$, and $\X$ admits a degeneration to the projectively normal toric variety associated to $\cP_{\X,\fwt[k]}$.
\end{thm}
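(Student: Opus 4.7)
The plan is to translate the Khovanskii basis claim into a statement about the value semigroup, then resolve it using the integer decomposition property of $\cO_{\minposet}$. The toric degeneration will then follow from Proposition \ref{prop:no-volume} together with the projective normality implied by the IDP.

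Concretely, writing $S_d = \{\nu_{\X,\fwt[k]}(f) \mid f \in \C[\X]_d \setminus \{0\}\} \subset \Z^{\ellwP}$ for the set of valuations of homogeneous degree-$d$ elements, I would first show that $S_d$ is exactly the set of lattice points in $d\cdot\cP_{\X,\fwt[k]} = d\cdot\cO_{\minposet}$. The containment $S_d \subset d\cdot\cO_{\minposet}$ follows from Theorem \ref{thm:nobody}, since for $f\in\C[\X]_d\setminus\{0\}$ the rescaled valuation $\nu_{\X,\fwt[k]}(f)/d$ lies in the Newton-Okounkov body $\Delta(\C[\X],\nu_{\X,\fwt[k]}) = \cP_{\X,\fwt[k]}$ by definition, and $\nu_{\X,\fwt[k]}$ takes integer values. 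For the reverse containment, Corollary \ref{cor:idp} (IDP) exhibits every lattice point in $d\cdot\cO_{\minposet}$ as a sum $\sum_{i=1}^d \mathbf{1}_{\sF_i}$ of indicator vectors of filters of $\minposet$, and such a sum equals $\nu_{\X,\fwt[k]}(p_{\sF_1}\cdots p_{\sF_d})$ by multiplicativity of the valuation, noting that the product is nonzero since $\C[\X]$ is a domain.

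This description makes the Khovanskii basis claim immediate. Since $\nu_{\X,\fwt[k]}$ has one-dimensional leaves (as established in the proof of Theorem \ref{thm:nobody} via Lemma \ref{lem:one-dim-leaves}), the associated graded algebra $\mathrm{gr}_{\nu_{\X,\fwt[k]}}(\C[\X])$ is isomorphic to the semigroup algebra of the value semigroup $S = \bigcup_d S_d$. We have just shown that $S$ is generated as a semigroup by the valuations $\{\mathbf{1}_\sF\}$ of the Pl\"ucker coordinates, so the images of the Pl\"ucker coordinates generate $\mathrm{gr}_{\nu_{\X,\fwt[k]}}(\C[\X])$ as an algebra, which is precisely the definition of a Khovanskii basis.

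For the toric degeneration, applying Proposition \ref{prop:no-volume} to the finite Khovanskii basis given by the Pl\"ucker coordinates yields a degeneration of $\X$ to a toric variety whose normalization is the toric variety associated to $\cP_{\X,\fwt[k]} = \cO_{\minposet}$. Since order polytopes satisfy the IDP (Corollary \ref{cor:idp}), this toric variety is already projectively normal and thus coincides with its own normalization, so the degeneration is directly to the projectively normal toric variety of $\cP_{\X,\fwt[k]}$. The only step requiring genuine care is the identification of $\mathrm{gr}_{\nu_{\X,\fwt[k]}}(\C[\X])$ with a semigroup algebra via one-dimensional leaves; the remaining work is orchestrating the IDP, Theorem \ref{thm:nobody}, and the Khovanskii/Newton-Okounkov framework, all of which are already in place.
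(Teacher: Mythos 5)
Your argument is correct and the core of it — reducing the Khovanskii basis claim to generation of the value semigroup and resolving it by decomposing $\nu_{\X,\fwt[k]}(f)\in m\Delta\cap\Z^{\ellwP}$ into a sum of filter indicator vectors via the IDP — is the same as the paper's. Two presentational differences are worth noting. First, the paper invokes the criterion from \cite[Proposition 2.4]{Kaveh_Manon_Khovanskii_bases} to reduce directly to semigroup generation, whereas you invoke one-dimensional leaves to identify $\mathrm{gr}_{\nu_{\X,\fwt[k]}}(\C[\X])$ with the semigroup algebra of the value semigroup; these are just two standard phrasings of the same reduction. Second, and more substantively, for the toric degeneration the paper argues via projective normality of $\X$ itself together with \cite[Proposition 17.4]{Rietsch_Williams_NO_bodies_cluster_duality_and_mirror_symmetry_for_Grassmannians}, while you instead establish the stronger statement $S_d = d\cdot\cO_{\minposet}\cap\Z^{\ellwP}$ (both inclusions, whereas the paper's argument only needs the forward one), which shows the value semigroup is saturated, hence $\mathrm{gr}_{\nu_{\X,\fwt[k]}}(\C[\X])$ is literally the polytopal semigroup algebra of $\cO_{\minposet}$ and the degeneration lands directly on its projectively normal toric variety. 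Your route is more self-contained and avoids the external citation, at the cost of proving the reverse containment. One caution on wording: the sentence ``Since order polytopes satisfy the IDP\ldots this toric variety is already projectively normal'' slightly misattributes the reason --- what actually forces the degenerate variety to be normal is the saturation $S_d = d\cO_{\minposet}\cap\Z^{\ellwP}$ you proved, not the IDP of $\cO_{\minposet}$ alone; the IDP is the tool you used to prove that saturation, not a separate fact about the target of the degeneration.
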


\begin{proof}
By \cite[Proposition 2.4]{Kaveh_Manon_Khovanskii_bases}, it suffices to prove that the valuations of Pl\"ucker coordinates $\{\nu_{\X,\fwt[k]}(p_{\sF})\mid\sF\subset\minposet\}$ generate the \emph{value semigroup} $S(\C[\X],\nu_{\X,\fwt[k]})=\{\nu_{\X,\fwt[k]}(f)\mid f\in \C[\X]\}$. Since $\C[\X]$ is $\Z_{\ge0}$-graded, it further follows from \cite[Definitions 2.20, 2.21]{Kaveh_Manon_Khovanskii_bases} that $S(\C[\X],\nu_{\X,\fwt[k]}) = \bigcup_{i\ge 0} \nu_{\X,\fwt[k]}(\C[\X]_i\setminus\{0\})$, so it suffices to show that the valuation of any homogeneous element of $\C[\X]$ can be obtained from the valuations of Pl\"ucker coordinates. 

Thus, we consider $f\in \C[\X]$ homogeneous of degree $m$, then by definition of $\Delta(\C[\X],\nu_{\X,\fwt[k]})$ we have that $\frac{1}{m}\nu_{\X,\fwt[k]}(f)\in \Delta(\C[\X],\nu_{\X,\fwt[k]})$ so that $\nu_{\X,\fwt[k]}(f)\in m\Delta(\C[\X],\nu_{\X,\fwt[k]})\cap\Z^{\ellwP}$. Since Proposition \ref{prop:order_polytope} says that $\Delta(\C[\X],\nu_{\X,\fwt[k]})=\cP_{\X,\fwt[k]}$ is the order polytope of $\minposet$, it follows from the integer decomposition property established in Corollary \ref{cor:idp} that $\nu_{\X,\fwt[k]}(f)$ can be written as a sum of $m$ vertices of $\cP_{\X,\fwt[k]}$, which are exactly the valuations of Pl\"ucker coordinates. Thus, the valuations of Pl\"ucker coordinates generate the semigroup $S(\C[\X],\nu_{\X,\fwt[k]})$ and hence the Pl\"ucker coordinates form a Khovanskii basis for $\X$ with respect to $\nu_{\X,\fwt[k]}$. 

Since we work with projective embeddings of $\X$ of the form $\X\hookrightarrow \bP(V_{\fwt[k]})$ arising as the closed orbit of the highest weight line in a finite-dimensional, irreducible $\G$-representations, our $\X$ are always projectively normal. Thus, the conditions of \cite[Proposition 17.4]{Rietsch_Williams_NO_bodies_cluster_duality_and_mirror_symmetry_for_Grassmannians} are satisfied in our case, and we obtain the desired toric degeneration.
\end{proof}

Newton-Okounkov bodies are generally far from polyhedral and in many cases Newton-Okounkov bodies and Khovanskii bases are difficult to construct, so the ease of our construction above for minuscule spaces provides a large family of polytopal Newton-Okounkov bodies with good combinatorial properties arising from the minuscule poset $\minposet$. 

\section{General Chevalley polytopes}
\label{sec:general}
While our proofs in the previous section using the minuscule assumption on $\X$ relied on the combinatorics of minuscule posets, Definition \ref{def:main_objects} applies equally well to any $\X$, any dominant weight $\varpi$ inducing a projective embedding $\X\hookrightarrow\bP(V_\varpi)$, and any reduced expression $\bs$ for $\wP$. The first difference in the general case is that the heap $\sH_{\bs}$ now depends on the choice of $\bs$. As a result, the valuation and torus also depend on the choice of $\bs$, and we lose access to the combinatorics of order ideals and order polytopes. As a first step towards generalizing the tools in the minuscule case, we give a combinatorial interpretation of the valuations of Pl\"ucker coordinates which generalizes the minuscule combinatorics. 

We define a \emph{weighted heap} $(\sH,\weight)$ to be a heap $\sH$ together with a weight function $\weight:\sH\rightarrow\N$. Now, fix a weight basis $\cB$ for $V_\varpi$, a dual basis $\cB^*$ for $V_\varpi^*$, and let $v_\mu\in\cB$ be a vector of weight $\mu$. Then for $\boldf=\Chf_{i_\ell}^{d_\ell}\cdots\Chf_{i_1}^{d_1}$, where $d_i\ge 0$, such that $v_\mu^*(\boldf\cdot v_\varpi) \neq 0$ and such that $i_{j+1}\neq i_j$ for all $j$, we define the weighted heap $(\sH_{\boldf},\weight_{\boldf})$ consisting of the heap $\sH_{\boldf}$ on the elements $\pb_i$ with $d_i>0$ with partial order $\pb_j < \pb_k$ if $j>k$ and $\Chf_{i_j}$ and $\Chf_{i_k}$ do not commute and weight function $\weight_{\boldf}(\pb_i) = d_i$. Then we define the set $\sH_{v_\mu}$ to be the set of all weighted heaps arising from such $\boldf$: 
\[\sH_{v_\mu} = \{(\sH_{\boldf},\weight_\boldf)\mid v_\mu^*(\boldf\cdot v_\varpi)\neq 0\}.\]

\begin{rem}
    In the above definition of $\sH_{v_\mu}$, we may replace $v_\mu^*(\boldf\cdot v_\varpi)=0$ with the simpler condition $\boldf\cdot v_\varpi = v_\mu$ (up to scaling) when there is a unique choice of Chevalley basis (also up to scaling) for $V_\varpi$. This is true when $V_\varpi$ is minuscule, which greatly simplifies the definitions used in the previous section.
\end{rem}

Given a weighted heap $(\sH,\weight)$, we let $\weight\cdot\sH$ denote the poset consisting of $\weight(\pb)$ incomparable copies $\pb_1,\dots, \pb_{\weight(\pb)}$ of each $\pb\in\sH$ with the partial order $\pb_i < \pb_j'$ if $\pb < \pb'$. As with $\minposet$, each element of a heap can be labeled by a simple reflection. Then a \emph{labeled weighted embedding} $\emb:(\sH,\weight)\hookrightarrow\sH_{\bs}$ is a labeled poset morphism $\emb:\weight\cdot\sH\hookrightarrow\sH_{\bs}$. The image is a weighted subset $(\emb(\weight\cdot\sH), \weight_{\emb})$ of $\sH_{\bs}$, where $\weight_{\emb}(\pb) = |\emb^{-1}(\pb)|$ for every $\pb\in\emb(\weight\cdot(\sH))$. (This generalizes the definitions of weighted embeddings of weighted order ideals in \cite[Definitions 2.20, 2.21]{Spacek_Wang_cominuscule}.)

Using these definitions, we can now rewrite the restrictions of the coordinate functions $p_{v_\mu}\in\C[\X]$ corresponding to weight vectors in our fixed basis $v_\mu\in\cB$ as
\[
p_{v_\mu}|_{\Xo} = \sum_{(\sH,\weight)\in\sH_{v_\mu}} \sum_{\emb:(\sH,\weight)\hookrightarrow \sH_{\bs}} a_\emb,
\qwhere
a_\emb = \prod_{\pb\in\emb(\weight\cdot\sH)} \frac{a_{\pb}^{\weight_\emb(\pb)}}{\weight_\emb(\pb)!}.
\]
Note that it is possible there are no embeddings of a given weighted heap $(\sH,\weight)$ into $\sH_{\bs}$, in which case the second sum above is empty. Since these coordinates form an algebra generating set for $\C[\X]$, the valuations of arbitrary polynomials in the $p_{b}$ for $b\in\cB$ can be computed using the expressions above.

The main obstacle in generalizing our tools from the minuscule case lies in the fact that the weighted heaps arising above generally are not filters of $\sH_{\bs}$ and furthermore need not even contain the same sets of labels due to braid moves. However, there is still a natural partial order on the set of weighted heaps $\sH_{v_\mu}$ and their embeddings, and we believe that there should correspondingly be a notion of \emph{heap polytope} which generalizes the order polytopes of minuscule posets. We now conjecture the generalizations of the statements observed in the minuscule case, and we plan to investigate them in upcoming work.

\begin{conj}
\label{conj:nobody}
    Let $\X=\G/\P$ be a homogeneous space together with a projective embedding $\X\hookrightarrow \bP(V_\varpi)$. There exists at least one reduced expression $\bs$ for $\wP$ such that $\cP_{\X,\varpi,\bs}$ is a Newton-Okounkov body for $\X$ with respect to the valuation $\nu_{\X,\varpi,\bs}$.
\end{conj}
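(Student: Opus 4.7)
My plan mirrors the strategy of Theorem \ref{thm:nobody}. The containment $\cP_{\X,\varpi,\bs}\subset\Delta(\C[\X],\nu_{\X,\varpi,\bs})$ is automatic, because each vertex of $\cP_{\X,\varpi,\bs}$ is by definition the valuation of a homogeneous degree-$1$ element. By Proposition \ref{prop:torus_computation}, restriction to $\Xo$ realizes $\C[\X]$ as a finitely generated subalgebra of a polynomial ring, so Lemma \ref{lem:one-dim-leaves} gives that $\nu_{\X,\varpi,\bs}$ has one-dimensional leaves, and Proposition \ref{prop:no-volume} then yields $\vol\bigl(\Delta(\C[\X],\nu_{\X,\varpi,\bs})\bigr)=\deg(\X)$. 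The problem thus reduces to producing at least one reduced expression $\bs$ for which $\vol(\cP_{\X,\varpi,\bs})=\deg(\X)$, since two nested closed convex bodies with equal finite volume must coincide.

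The first substantive step is a combinatorial description of the vertices of $\cP_{\X,\varpi,\bs}$ via weighted heap embeddings. For a Chevalley basis vector $v_\mu=\boldf\cdot v_\varpi$, the restriction $p_{v_\mu}|_{\Xo}$ is a sum indexed by weighted embeddings $\emb:(\sH_\boldf,\weight_\boldf)\hookrightarrow\sH_{\bs}$, and the valuation records the exponent vector of the degrevlex-minimal such monomial $a_{\emb^{\mathrm{min}}}$. I would formalize this into a notion of \emph{heap polytope} generalizing $\cO_{\minposet}$, and seek a Stanley-type formula expressing its normalized volume as a count of certain ``weighted linear extensions'' of $\sH_{\bs}$. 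Choosing $\bs$ is then a matter of finding a reduced expression for which the degrevlex-minimal embeddings $\emb^{\mathrm{min}}$ admit a clean closed form, for instance one arising from a Bott--Samelson presentation or from a string-parametrization datum for $\wP$, so that the vertices of $\cP_{\X,\varpi,\bs}$ become indicator-type vectors of weighted filters of $\sH_{\bs}$. The Minkowski decomposition predicted by Conjecture \ref{conj:decomposition} would provide a further structural reduction, propagating the result from the fundamental-weight case to arbitrary $\varpi$ along the lines of the minuscule argument.

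The main obstacle is matching this combinatorial volume count to $\deg(\X\hookrightarrow\bP(V_\varpi))$. The latter can be computed via Schubert calculus or as a specialized Weyl-character evaluation and admits combinatorial expressions in terms of Littelmann paths, LS-path models, or Standard Monomial Theory. Constructing the requisite bijection between ``weighted linear extensions'' of $\sH_{\bs}$ and one of these path or tableau models is the heart of the argument, and the examples in Appendix \ref{sec:examples} indicate that this matching depends crucially on $\bs$. As a potential shortcut, one could exploit the connection with string polytopes from Section \ref{sec:string}: since string polytopes are known to be Newton-Okounkov bodies by results of Littelmann and Kaveh, identifying $\cP_{\X,\varpi,\bs}$ with the relevant string polytope via a unimodular (or at least volume-preserving) transformation for some good $\bs$ would bypass the direct volume computation altogether.
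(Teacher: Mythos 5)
This statement is one of the paper's open \emph{conjectures}; the paper does not prove it, but rather supports it with examples in Appendix \ref{sec:examples} (and explicitly defers a proof to future work). So there is no paper argument to compare against, and what you have written is a research plan rather than a proof.

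Your initial reduction is sound and does match the strategy the paper uses in the minuscule case: the containment $\cP_{\X,\varpi,\bs}\subset\Delta(\C[\X],\nu_{\X,\varpi,\bs})$ is immediate, and combining Proposition \ref{prop:torus_computation}, Lemma \ref{lem:one-dim-leaves}, and Proposition \ref{prop:no-volume} gives $\vol(\Delta(\C[\X],\nu_{\X,\varpi,\bs}))=\deg(\X)$ for \emph{every} $\bs$, so the conjecture reduces to exhibiting one $\bs$ with $\vol(\cP_{\X,\varpi,\bs})=\deg(\X)$. Everything after that, however, is aspirational: you posit a ``heap polytope'' theory and a ``weighted linear extension'' volume formula that are not constructed, and you posit a bijection with path/tableau models that is not given. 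These are precisely the ingredients the paper identifies as the open difficulty (it notes that the weighted heaps in Section \ref{sec:general} need not be filters of $\sH_{\bs}$ and may carry different label sets after braid moves, so the order-polytope machinery does not transport directly). The appendix examples also show that the naive volume count genuinely fails for many $\bs$ (e.g.\ $\bs_1,\bs_2$ for $\Sp_8/P_2$, and $\bs_2$ in the $\LGF_4$ case), so any argument must single out good reduced expressions, which your plan does not yet do concretely.

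One caution on your proposed shortcut via string polytopes: the paper's Section \ref{sec:string} exhibits a minuscule case ($\OG(2,5)$) where the string polytope for a particular $\bs_0$ has a non-integral vertex and fails the integer decomposition property, while the Chevalley polytope is a unimodular simplex. In that example the two polytopes cannot be lattice-equivalent (one is integral, the other is not), even though both turn out to have the right normalized volume. So if you pursue this route you would at minimum need to restrict to reduced expressions where the string polytope is itself integral and lattice-equivalent to $\cP_{\X,\varpi,\bs}$, and such a choice is not obviously available for all $\X$ and $\varpi$. A volume-preserving (rather than unimodular) comparison might survive, but then you still need to argue volume equality, which is the step you were hoping to bypass.

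In summary: the reduction to a volume computation is correct and aligned with the paper's minuscule proof of Theorem \ref{thm:nobody}, but the remainder of your proposal is an outline of open problems rather than a proof, and the string-polytope shortcut as stated conflicts with the paper's own counterexamples to lattice equivalence.
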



Note that unlike the minuscule case, in general certain Chevalley polytopes $\cP_{\X,\varpi,\bs}$ may be strictly contained in the Newton-Okounkov body for $\X$ with respect to $\nu_{\X,\varpi,\bs}$. It would be interesting to understand more concretely which reduced expressions are distinguished by having the corresponding Chevalley polytope equal to a Newton-Okounkov body for $\X$.

\begin{conj}
\label{conj:khovanskii}
Let $\X=\G/\P$ be a homogeneous space together with a dominant weight $\varpi$ yielding a projective embedding $\X\hookrightarrow\bP(V_\varpi)$. There exists a reduced expression $\bs$ for $\wP$ such that the set of coordinates corresponding to any Chevalley basis for $V_\varpi$ forms a Khovanskii basis for $\C[\X]$, and $\X$ admits a toric degeneration whose normalization is the toric variety associated to $\cP_{\X,\varpi,\bs}$.
\end{conj}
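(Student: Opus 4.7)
The plan is to adapt the strategy of the minuscule case (Theorem \ref{thm:plucker-khovanskii}), replacing the order polytope machinery with the conjectural weighted-heap framework developed just before the statement. First, assuming Conjecture \ref{conj:nobody}, fix a reduced expression $\bs$ for $\wP$ such that $\cP_{\X,\varpi,\bs}=\Delta(\C[\X],\nu_{\X,\varpi,\bs})$, and let $\cB$ be an arbitrary Chevalley basis for $V_\varpi$ with corresponding coordinates $\{p_b\}_{b\in\cB}$ on $\X$. By \cite[Proposition 2.4]{Kaveh_Manon_Khovanskii_bases}, the Khovanskii basis property reduces to showing that $\{\nu_{\X,\varpi,\bs}(p_b)\}_{b\in\cB}$ generates the value semigroup $S(\C[\X],\nu_{\X,\varpi,\bs})$, and since $\C[\X]$ is positively graded this is equivalent to writing $\nu_{\X,\varpi,\bs}(f)\in m\cP_{\X,\varpi,\bs}\cap\Z^{\ellwP}$ as a sum of $m$ lattice points of the form $\nu_{\X,\varpi,\bs}(p_b)$ for every homogeneous $f\in\C[\X]$ of degree $m$.

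The core combinatorial step is to establish a suitable generalization of the integer decomposition property (Corollary \ref{cor:idp}) for $\cP_{\X,\varpi,\bs}$, together with the identification of its vertices with valuations of Chevalley basis coordinates. Using the weighted-heap formula $p_{v_\mu}|_{\Xo}=\sum_{(\sH,\weight)\in\sH_{v_\mu}}\sum_\emb a_\emb$, one reads off $\nu_{\X,\varpi,\bs}(p_{v_\mu})$ as the exponent vector of the degrevlex-minimal monomial $a_\emb$ appearing in this expansion; ranging $b$ over $\cB$ should recover exactly the vertex set of $\cP_{\X,\varpi,\bs}$, since by Conjecture \ref{conj:nobody} this polytope equals the Newton-Okounkov body, so every vertex lies in the convex hull of valuations of degree-$1$ elements of $\C[\X]$, and the valuation axioms force each vertex to coincide with $\nu_{\X,\varpi,\bs}(p_b)$ for some $b\in\cB$, independently of the Chevalley basis chosen. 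Once the integer decomposition property is in hand, expressing $\nu_{\X,\varpi,\bs}(f)$ as a sum of $m$ vertices yields the Khovanskii basis statement exactly as in the proof of Theorem \ref{thm:plucker-khovanskii}. The toric degeneration claim then follows immediately from Proposition \ref{prop:no-volume}, which upgrades a finite Khovanskii basis to a degeneration of $\X$ to a toric variety whose normalization is the toric variety associated to $\cP_{\X,\varpi,\bs}$, matching the statement of the conjecture (which is weaker than the projectively normal degeneration obtained in the minuscule case via \cite[Proposition 17.4]{Rietsch_Williams_NO_bodies_cluster_duality_and_mirror_symmetry_for_Grassmannians}, consistent with the fact that $\cP_{\X,\varpi,\bs}$ need not satisfy the integer decomposition property on the nose outside the minuscule setting).

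The main obstacle will be proving the integer decomposition property for general Chevalley polytopes. Outside the minuscule case, $\cP_{\X,\varpi,\bs}$ is not an order polytope, so Stanley's interpretation of lattice points of $m\cO_\sP$ as poset morphisms $\sP\to[m+1]$ is unavailable; one would instead need to develop the combinatorics of heap polytopes intrinsically and exhibit canonical decompositions of their lattice points using weighted heaps $(\sH,\weight)\in\sH_{v_\mu}$ and their labeled weighted embeddings into $\sH_{\bs}$. A second subtlety is that the valuation can collapse distinct labeled weighted embeddings to the same degrevlex-minimal exponent, so the identification of vertices with Chevalley basis coordinates requires careful bookkeeping; this is also precisely why both Conjectures \ref{conj:nobody} and \ref{conj:khovanskii} are only expected to hold for distinguished $\bs$, and isolating those reduced expressions combinatorially (as foreshadowed after Conjecture \ref{conj:nobody}) is itself a delicate problem likely intertwined with the Minkowski decomposition property of Conjecture \ref{conj:decomposition}.
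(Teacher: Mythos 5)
You should first note that the statement you are asked to prove is a conjecture: the paper states Conjecture~\ref{conj:khovanskii} without proof, supporting it only by computational examples in Appendix~\ref{sec:examples}, and explicitly defers the combinatorics of ``heap polytopes'' to future work. So there is no proof in the paper against which to compare. As a sketch of how a proof might go, your proposal reasonably mirrors the minuscule argument of Theorem~\ref{thm:plucker-khovanskii}, and you correctly identify that the central missing ingredient is a heap-polytope analogue of the integer decomposition property (Corollary~\ref{cor:idp}), which is indeed the combinatorial tool the paper signals it intends to develop.

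There is, however, a specific claim in your argument that does not follow from what you cite. You assert that ``the valuation axioms force each vertex to coincide with $\nu_{\X,\varpi,\bs}(p_b)$ for some $b\in\cB$, independently of the Chevalley basis chosen.'' The axiom $\nu(f+g)\succeq\min(\nu(f),\nu(g))$ only gives $\nu\bigl(\sum_b c_b p_b\bigr)\succeq\min_b\nu(p_b)$; if two Chevalley basis coordinates share the same valuation, a suitable linear combination attains a strictly larger value, and that value may well be a vertex of $\cP_{\X,\varpi,\bs}$ realized by no single $p_b$. What you need is that the Chevalley basis is \emph{adapted} to $\nu_{\X,\varpi,\bs}$, i.e.\ that the $\nu_{\X,\varpi,\bs}(p_b)$ are pairwise distinct and therefore exhaust $\nu_{\X,\varpi,\bs}(\C[\X]_1\setminus\{0\})$. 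The paper explicitly flags this as the reason the conjecture depends on the choice of basis and reduced expression (``the valuations of linearly independent elements may fail to be distinct'') and its discussion of the $\Sym^3(V_{\fwt[2]})$ example in Appendix~\ref{sec:examples} illustrates the phenomenon concretely; this must be taken as a hypothesis to be verified, not derived from the valuation axioms. Finally, your proposal strains internally: you require the IDP to run the semigroup-generation argument, but then remark that the weaker toric-degeneration conclusion suggests the IDP ``need not hold on the nose'' outside the minuscule case. If the IDP fails for the chosen $\bs$, your reduction to vertex decompositions breaks down and you need a different argument for degree-one generation of the value semigroup; if it holds, Proposition~17.4 of Rietsch--Williams (together with the projective normality of the embedding, noted in Section~\ref{sec:conventions}) would in fact give the stronger projectively normal degeneration, so the conjecture's weaker phrasing would be an under-statement. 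Resolving this dichotomy is part of what makes the conjecture nontrivial.
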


In some cases, there will be a unique Chevalley basis for $V_\varpi$ up to scaling. However, in general, there will be many choices of a Chevalley basis, and we expect that any such choice will satisfy the previous conjecture. The dependence of Conjecture \ref{conj:khovanskii} on the particular choice of weight basis is due to the fact that the valuations of linearly independent elements may fail to be distinct. Thus, finding a Khovanskii basis for $\C[\X]$ depends not only on defining a suitable valuation but also finding a suitable set of functions whose corresponding valuations also generate the valuation semigroup. Finally, we have also observed the following remarkable property of Chevalley polytopes:

\begin{conj}
\label{conj:decomposition}
    Let $\X=\G/\P\hookrightarrow\bP(V_\varpi)$ be a homogeneous space with a projective embedding, and let $\varpi=\sum_i c_i\fwt[i]$ be a dominant integral weight for $\G$ written as a linear combination of fundamental weights. Then, for any reduced decomposition $\bs$ of $\wP$, there exist reduced decompositions $\bs_j$ of the minimal coset representatives $w^{P_j}$ such that 
    \[
    \cP_{\X,\varpi,\bs} = \sum\nolimits_j c_j\cdot \cP_{\G/\P_j, \fwt[j], \bs_j},
    \]
    where the sum denotes the Minkowski sum in $\R^{\ellwP}$ with each $\cP_{\G/\P_j,\fwt[j],\bs_j}$ re-embedded 
    into $\R^{\ellwP}$ by identifying of $w^{P_i}$ as a subword of $\wP$. 
\end{conj}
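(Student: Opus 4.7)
The plan is to reduce the claimed Minkowski decomposition to a vertex-by-vertex comparison, and then to derive it from the tensor-product embedding $V_\varpi\hookrightarrow\bigotimes_j V_{\fwt[j]}^{\otimes c_j}$ as the Cartan component, using the diagonal factorization $u_-\cdot v_\varpi=\bigotimes_j(u_-\cdot v_{\fwt[j]})^{\otimes c_j}$ that arises from the fact that $u_-\in\opendunim$ acts group-like on tensor products. Writing $I=\{i:c_i>0\}$ for the support of $\varpi$ in the fundamental-weight basis, we have $\P=\bigcap_{j\in I}\P_j$ and hence $\weylp\subset\weyl^{P_j}$ for every $j\in I$, so $\wP$ and the minimal coset representative $w^{P_j}$ lie in the same $\weyl^{P_j}$-coset. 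This yields $w^{P_j}\le\wP$ in the Bruhat order, and the subword property then produces a reduced expression $\bs_j$ for $w^{P_j}$ realized as a subword of $\bs$, identifying $\sH_{\bs_j}$ with a labeled sub-poset of $\sH_{\bs}$ and providing the coordinate embedding $\R^{\ell(w^{P_j})}\hookrightarrow\R^{\ellwP}$ needed in the statement.

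The heart of the argument uses the tensor-product structure. Since the Cartan inclusion sends $v_\varpi\mapsto\bigotimes_j v_{\fwt[j]}^{\otimes c_j}$, applying the iterated coproduct to any monomial $\boldf$ in the Chevalley generators $\Chf_i$ and restricting to $\opendunim$ yields an expansion of the form
\[
p_{\boldf\cdot v_\varpi}|_{\Xo}\;=\;\sum\prod_{j,k}p_{\boldf_{j,k}\cdot v_{\fwt[j]}}|_{\Xo_j},
\]
where the sum runs over all ways to distribute the factors of $\boldf$ across the $\sum_j c_j$ tensor factors. Since $\nu_{\X,\varpi,\bs}$ is multiplicative and each restriction on the right pulls back via the subword embedding from the corresponding coordinate on $\G/\P_j$, the valuation of any single summand is a sum of valuations from the factor polytopes, and thus lies in $\sum_j c_j\cP_{\G/\P_j,\fwt[j],\bs_j}\subset\R^{\ellwP}$.

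With this set-up, I would verify the Minkowski equality by checking containment of vertex sets in both directions. For the inclusion $\sum_j c_j\cP_{\G/\P_j,\fwt[j],\bs_j}\subseteq\cP_{\X,\varpi,\bs}$, given Chevalley coordinates $p_{\boldf_j\cdot v_{\fwt[j]}}$ realizing vertices of each factor polytope, the tensor product $\bigotimes_j(\boldf_j\cdot v_{\fwt[j]})^{\otimes c_j}$ projects up to a nonzero scalar onto a Chevalley basis element of $V_\varpi$, and the diagonal factorization of $u_-$ forces the valuation of the resulting coordinate to equal the desired sum. For the reverse inclusion, every vertex of $\cP_{\X,\varpi,\bs}$ is realized by some $p_{\boldf\cdot v_\varpi}$, and the goal is to show that a distinguished summand in the above expansion achieves the degrevlex minimum, thereby decomposing the vertex as a sum of vertices of the factor polytopes.

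The main obstacle is controlling the leading-term behaviour across the coproduct expansion. The restriction $p_{\boldf\cdot v_\varpi}|_{\Xo}$ is itself a sum over weighted-heap embeddings $\emb:(\sH_{\boldf},\weight_{\boldf})\hookrightarrow\sH_{\bs}$, and one must show that the degrevlex-minimal such embedding decomposes consistently into minimal embeddings into each subheap $\sH_{\bs_j}$, with no cancellation among coproduct summands shifting the leading term off the expected vertex. This will likely require a combinatorial non-vanishing lemma for the Cartan projection of tensor products of Chevalley basis elements, together with a careful (presumably greedy) rule for selecting the subword $\bs_j$ among the many reduced subwords of $\bs$ realizing each $w^{P_j}$, so that the minimal embeddings on the factor polytopes glue correctly along the subword decomposition of $\sH_{\bs}$.
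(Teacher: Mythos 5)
The statement you are attempting to prove is stated in the paper only as Conjecture~\ref{conj:decomposition}; the paper offers no proof, only numerical verification in Appendix~\ref{sec:examples} for a handful of cases. There is therefore no paper argument to compare against, and the heuristic you invoke (the Cartan component $V_\varpi\subset\bigotimes_j V_{\fwt[j]}^{\otimes c_j}$ together with the group-like behaviour of $u_-$ on tensor products) is precisely the intuition the authors themselves state immediately after the conjecture, calling it a ``polyhedral shadow of the Littlewood-Richardson rule.'' So you are reconstructing the authors' heuristic correctly, but you are not closing the gap between heuristic and proof.

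Your outline leaves three genuine gaps, and they are not peripheral --- each one is the crux of the problem. First, you need a non-vanishing statement: given vectors $\boldf_j\cdot v_{\fwt[j]}\neq 0$ in each factor, it is not automatic that the projection of $\bigotimes_j(\boldf_j\cdot v_{\fwt[j]})^{\otimes c_j}$ onto the Cartan component $V_\varpi$ is nonzero. Indeed, in the paper's own $\OG(2,5)$ example with $\varpi=3\fwt[2]$, the authors explicitly display a relation $-a+2b-c=0$ among vectors of the same weight obtained from different monomials, which is exactly the sort of cancellation that could kill a Cartan projection, and they take ``special care'' to avoid a wrong valuation as a result. Second, even granting nonvanishing, you need that the degrevlex-minimal term of $p_{v_\mu}|_{\Xo}$ (taken over all weighted-heap embeddings into the full heap $\sH_{\bs}$) is achieved by a term that factors compatibly along the chosen subheaps $\sH_{\bs_j}\subset\sH_{\bs}$. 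This is subtle because the factor polytopes $\cP_{\G/\P_j,\fwt[j],\bs_j}$ are defined via the valuations on the \emph{small} tori $\opendunim_j$, whereas the restriction $p_{\boldf_{j,k}\cdot v_{\fwt[j]}}|_{\opendunim}$ that appears in your coproduct expansion lives on the \emph{big} torus and generally depends on variables outside $\sH_{\bs_j}$; the identity embedding which realizes the degrevlex minimum on the small heap need not realize it on the big one (it need not even be a filter of $\sH_{\bs}$). Third, you need a canonical rule for picking the reduced subword $\bs_j$ of $\bs$ realizing $w^{P_j}$ --- the existence of some subword is fine by the Bruhat subword property, but the Minkowski sum in the statement depends on the re-embedding $\R^{\ell(w^{P_j})}\hookrightarrow\R^{\ellwP}$ that this choice induces, and different subwords yield different images. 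You flag all three issues yourself (``no cancellation among coproduct summands,'' ``combinatorial non-vanishing lemma,'' ``careful (presumably greedy) rule''), which is honest, but it means the write-up is a research program, not a proof.

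What you have done that is genuinely useful: you have correctly reduced the conjecture to concrete combinatorial lemmas about leading terms of Cartan projections and about subword-compatible heap embeddings, and the reduction via $u_-\cdot v_\varpi=\bigotimes_j(u_-\cdot v_{\fwt[j]})^{\otimes c_j}$ is the right starting point. If you wish to make progress, I would suggest first proving the ``easy'' containment $\sum_j c_j\cP_{\G/\P_j,\fwt[j],\bs_j}\subseteq\cP_{\X,\varpi,\bs}$ for one carefully chosen rule of subword selection and checking it against the appendix's $\Fl(1,3;4)$ data for both $\bs_1=s_3s_2s_1s_2s_3$ and $\bs_2=s_3s_1s_2s_1s_3$, where the authors record the embedded factor polytopes explicitly; that exercise should tell you whether a greedy leftmost/rightmost subword rule is actually the correct one.
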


We view this conjecture as providing a polyhedral shadow of the Littlewood-Richardson rule: the lattice points of Chevalley polytopes are parametrized by a weight basis for $V_{\fwt}$ and thus the Minkowski sum of these polytopes corresponds to a (distinguished subrepresentation of) the tensor product 
\[
V_\varpi\subset \bigotimes\nolimits_i V_{\fwt}^{\otimes c_i}.
\]
Other subrepresentations in the decomposition of the tensor product can be seen as polytopes which are strictly contained in $\cP_{\X,\varpi,\bs}$.

\section{Comparison with string polytopes}
\label{sec:string}
Our polytopes share many similarities with \emph{string polytopes}, originally studied by Littelman and Bernstein-Zelevinsky \cite{littelman_cones_crystals_patterns, BZ-tensor-product-mult}. We give a brief introduction of string polytopes following \cite[Section 3]{kaveh_strings}, and we refer to the references therein for a more detailed exposition. First, the definition of string polytope requires the notion of a \emph{crystal basis} for $V_\varpi$, which is a combinatorially defined weight basis chosen as follows. For any simple root $\sr_i$, we define functions $V_\varpi\rightarrow\Z$: 
\begin{align*}
    \epsilon_i(v) &= \max\{a~|~\Che_i^a \cdot v\neq 0\}~\textrm{and}\\
    \varphi_i(v) &= \max\{a~|~\Chf_i^a \cdot v\neq 0\},
\end{align*}
where we set $\epsilon_i(v)=0$ if $\Che_i\cdot v = 0$ and analogously for $\varphi_i(v)$. 

\begin{df}[{\cite[Definition 3.1]{kaveh_strings}}]
\label{df:crystal_basis}
For a weight basis $\cB$ of $V_\varpi$ and $b\in\cB$, let the action of $\Che_i$ on $b$ be given by $\Che_i\cdot b = \sum_j c_j b_j$ and analogously $\Chf_i\cdot b = \sum_j d_j b_j$. Then $\cB$ is a \emph{crystal basis} for $V_\varpi$ if:
\begin{enumerate}
    \item for every $j$ such that $c_j\neq 0$, we have $\epsilon_i(b_j) < \epsilon_i(b)$;
    \item if $\epsilon_i(b)\neq 0$, then there is a unique $j$ such that $\epsilon_i(b_j) = \epsilon_i(b)-1$, and for this $j$ we further have $\varphi_i(b) = \varphi_i(b_j)-1$;
    \item for every $j$ such that $d_j\neq 0$, we have $\varphi_i(b_j) < \varphi_i(b)$; and
    \item if $\varphi_i(b)\neq 0$, then there is a unique $j$ such that $\varphi_i(b_j) = \varphi_i(b)-1$, and for this $j$ we further have $\epsilon_i(b) = \epsilon_i(b_j)-1$.
\end{enumerate}
\end{df}

The existence of a weight basis satisfying such properties was proven in \cite[Lemma 5.50]{BK-crystals}, and such a basis is sometimes also called a \emph{perfect basis} or a \emph{canonical basis}. The \emph{crystal graph} associated to a crystal basis $\cB$ is the graph with vertex set $\cB$ and edges $(b,b')$ if $\epsilon_i(b')=\epsilon_i(b)-1$ and $\Che_i\cdot b$ has $b'$ with nonzero coefficient in its $\cB$ expansion (or equivalently, if $\varphi_i(b)=\varphi_i(b')-1$ and $\Chf_i\cdot b'$ has $b$ with nonzero coefficient in its $\cB$ expansion). Remarkably, the crystal graph does not depend on the choice of $\cB$. 

Next we define the \emph{string parametrization} $\iota_0(\cB)$ of a crystal basis $\cB$ associated to a reduced expression ${\bf s}_0=s_{r_1}\cdots s_{r_{\ell(\wo)}}$ for the longest word $\wo\in\weyl$ by associating to each basis vector $v\in \cB$ the integer vector $\iota_0(v)=(a_1,\dots, a_{\ell(\wo)})$ computed iteratively via
\begin{align*}
    a_1 &= \max \{a~|~\tilde{\Che}_{r_1}^a(v)\neq 0\}\\
    a_2 &= \max \{a~|~\tilde{\Che}_{r_2}^a\tilde{\Che}_{r_1}^{a_1}(v)\neq 0\}\\
    &\vdots\\
    a_{\ell(\wo)} &= \max \{a~|~\tilde{\Che}_{r_{\ell(\wo)}}^a\cdots \tilde{\Che}_{r_1}^{a_1}(v)\neq 0\},
\end{align*}
where $\tilde{\Che}_{r_i}(b)$ is the unique element $b'\in\cB$
such that $\epsilon_{r_i}(b)=\epsilon_{r_i}(b')-1$. (Note that we used $\tilde{\Che}_{r_i}$ here as opposed to $\tilde{\Chf}_{r_i}$ as was presented in \cite[Section 3.1]{kaveh_strings}. This is because we prefer to work in $V_\varpi$ rather than its dual.) 

For each dominant integral weight $\varpi$, we fix a crystal basis $\cB_\varpi$ for the corresponding finite-dimensional irreducible representation $V_\varpi$, though the image of the string parametrization only depends on the crystal graph and not the particular choice of crystal basis. Then collecting the string parametrizations of crystal bases $\cB_\varpi$ for all finite-dimensional irreducible $\G$-representations $V_\varpi$, we define the \emph{string cone} 
\[
C_{{\bf s}_0}=\overline{\mathrm{cone}\left(\bigcup_{\varpi\in\Lambda^+} \{\varpi\}\times \iota_0(\cB_\varpi)\right)}
\]
where $\Lambda^+$ denotes the set of dominant integral weights. For a particular $\varpi\in\Lambda^+$, we further define the \emph{string polytope}
\[
\cS_{\varpi,\bs_0} = C_{{\bf s}_0}\cap\bigl(\{\varpi\}\times\R^{\ell(\wo)}\bigr).
\]

The main difference between the string parametrization and our valuations arises from our choice of action. In the context of the representation $V_\varpi$, our construction ``starts from'' the highest weight vector $v_\varpi$ and acts by a monomial in the $\Chf_i$ distinguished by the choice of $\bs$ to obtain a given weight vector $v_\mu$ for the construction of our Chevalley polytopes, while the string parametrization instead ``starts from'' from a crystal basis vector $v_\mu$ and identifies a path in the crystal graph on $\cB$ distinguished by the choice of $\bs_0$. Although the difference is somewhat subtle, we observed that our approach often yields polytopes possessing much better combinatorial properties, as we now illustrate.

\begin{ex}
We explicitly compute the Chevalley and string polytopes for the maximal orthogonal Grassmannian $\X=\OG(2,5)=\G/\P$ for $\G=\Spin_5$ of type $\LGB_2$ with $\P=\P_2=\P_{\fwt[2]}$, with the minimal embedding $\OG(2,5)\hookrightarrow\bP(V_{\fwt[2]})$ obtained from the fundamental weight $\fwt[2]$, and with the reduced expression $\bs_0=s_2s_1s_2s_1$ for $\wo$ and $\bs=s_2s_1s_2$ for $\wP$. Since this representation is minuscule, all the weight spaces are one-dimensional, leading to a canonical choice of crystal basis up to (global) scaling. The fundamental representation $V_{\fwt[2]}$ is $4$-dimensional, and we compute the string parametrizations:
\ali{
\iota_0(v_{\fwt[2]}) &= (1,1,1,0), \qquad \iota_0(v_{\fwt[1]-\fwt[2]}) = (0,1,1,0), \\
\iota_0(v_{-\fwt[1]+\fwt[2]}) &= (1,0,0,0),\qquad \iota_0(v_{-\fwt[2]}) = (0,0,0,0).}
These four points are lattice points of the string polytope $\cS_{\fwt[2],\bs_0}$. However, these four points line on a plane since $(1,0,0,0) + (0,1,1,0) = (1,1,1,0)$ and are the vertices of a $2$-dimensional square, which cannot be a Newton-Okounkov body for $\X=\LGB_2/\P_2$ since it does not have dimension $3=\ellwP=\dim(\X)$ and volume $1=\deg(\X)$. In fact, this square is not even equal to the string polytope $\cS_{\fwt[2],\bs_0}$. In \cite[Example 6.2.9]{steinert_thesis}, Steinert shows that $\cS_{\fwt[2],\bs_0}$ has one further vertex: 
\[
(0,0,0,0),\qquad (0,\tfrac{1}{2},1,0),\qquad (0,1,1,0),\qquad (1,0,0,0),\qquad(1,1,1,0),
\]
and in particular is not integral or full-dimensional. Moreover, $\cS_{\fwt[2],\bs_0}$ has five vertices, whereas the representation $V_{\fwt[2]}$ is $4$-dimensional, so it is unclear what meaning to assign the final vertex. Furthermore, $\cS_{\fwt[2],\bs_0}$ does not satisfy the integer decomposition property because the lattice point $(0,1,2,0) = 2\cdot(0,\frac{1}{2},1,0)$ cannot be written as a sum of lattice points of $\cS_{\fwt[2],\bs_0}$ (note that $(0,\frac{1}{2},1,0)$ is not a lattice point, even though it is a vertex). Despite these drawbacks, the convex hull of these vertices is $3$-dimensional and has (normalized) volume $1$, so the string polytope $\cS_{\fwt[2],\bs_0}$ is in fact a Newton-Okounkov body for $\X$. 

On the other hand, since $\fwt[2]$ is a minuscule fundamental weight in type $\LGB_2$, our Chevalley polytope $\cP_{\X,\fwt[2]}$ does not depend on the reduced expression for $\wP$ and can be constructed directly by considering order filters of the minuscule poset $\minposet=\raisebox{-2pt}{\Yboxdim{6pt}\tiny\young(2,12)}$. This poset has the filters $\varnothing$, $\yng(1)$, $\yng(1,1)$, $\yng(1,2)$, corresponding to the indicator vectors
\[
(0,0,0),\qquad(1,0,0),\qquad(1,1,0),\qquad(1,1,1).
\]
The Chevalley polytope $\cP_{\X,\fwt[2]}$ defined by these vertices is a full-dimensional $3$-simplex with volume $1$ and is a Newton-Okounkov body for $\X$. Furthermore, since it is the order polytope of $\minposet$, it has the integer decomposition property by Corollary \ref{cor:idp}, is an integral 0/1-polytope, and parametrizes a weight basis for the $4$-dimensional representation $V_{\fwt[2]}$, in contrast with the string polytope above.
\end{ex}

This behavior of string polytopes is not an isolated occurrence. Steinert notes in \cite[Section 6.2.1]{steinert_thesis} the existence of an infinite family of string polytopes associated to the minuscule type-$\LGA$ Grassmannians $\Gr(3,n)$ which do not have integral vertices or the integer decomposition property. Hence, even for minuscule $\X$ in their minimal embeddings $\X\hookrightarrow\bP(V_{\fwt[k]})$, the string polytopes often fail to be well-behaved if the reduced decomposition is not carefully chosen. On the other hand, our minuscule Chevalley polytopes are \emph{always} integral polytopes whose lattice points parametrize a weight basis for $V_{\fwt[k]}$ and which have good combinatorial properties arising from order polytopes, such as the integer decomposition property. (We note that Steinert suggests in \cite[Remark 6.2.7]{steinert_thesis} that the reason for the bad behavior of certain reduced decompositions is the failure of the integer decomposition property.) Thus, despite the similarities in the definitions of string polytopes and Chevalley polytopes, our Chevalley polytopes possess much better combinatorial properties in the minuscule case, and we believe this behavior extends to the non-minuscule case as well.
\bibliographystyle{amsalpha}
\bibliography{bib}
\addresseshere

\newpage
\appendix
\section{Non-minuscule examples}
\label{sec:examples}
Here, we present non-minuscule homogeneous spaces considered in various embeddings for which Conjectures \ref{conj:nobody}, \ref{conj:khovanskii}, and \ref{conj:decomposition} hold. Our proofs in the minuscule case relied on computing the volume of the Chevalley polytope $\cP_{\X,\varpi,\bs}$, which is by definition a subset of the Newton-Okounkov body for $\X$ with respect to $\nu_{\X,\varpi,\bs}$, and show that it is equal to the degree of $\X$. While we will develop tools for performing this computation in general in upcoming works, we now give several examples where the same result holds. We computed the degree of $\X$ using linear extensions of $\minposet$ in the minuscule case, and in general we use the following formula of Borel and Hirzebruch \cite[Theorem 24.10]{BH}, as cited in \cite{GW_hilbert}:
\[
\deg(\X,V_\varpi) = \dim(\X)!\prod_{\alpha}\frac{\llan\sdr,\varpi\rran}{\llan\sdr,\rho\rran},
\]
where $\rho$ denotes the half sum of all positive roots and the product is over positive roots such that $\llan\sdr,\varpi\rran\neq0$. Furthermore, we used Polymake \cite{polymake-2000, polymake-2017} to compute the volumes of the polytopes below to verify whether or not they are Newton-Okounkov bodies for the corresponding $\X$. Finally, we used Polymake to check in each case whether the lattice points of each polytope $\cP$ below form a Hilbert basis for the cone over $\{1\}\times\cP$, which indicates that the polytope has the integer decomposition property and hence that the corresponding coordinates form a Khovanskii basis for $\C[\X]$ as well as providing a projectively normal toric degeneration.

\addtocontents{toc}{\SkipTocEntry}
\subsection{The adjoint two-step flag variety}
The first example we present is the two-step partial flag variety $\X=\Fl(1,3;4)=\SL_4/\P_{1,3}$ where all three conjectures apply and hold. We note that $\varpi=\fwt[1]+\fwt[3]$ is an adjoint weight (and therefore a coadjoint weight as well, since $\LGA_3$ is simply-laced), and that the same holds for any $\fwt[1]+\fwt[n]$ in type $\LGA_n$. 

We will consider the minimal embedding of $\X=\Fl(1,3;4)$ into $\bP(V_\varpi)$ for $\varpi=\fwt[1]+\fwt[3]$. According to Conjecture \ref{conj:decomposition}, we should compare the polytope $\cP_{\X,\varpi,\bs}$ for a suitable choice of reduced expression $\bs$ for $\wP$ with the polytopes $\cP_1=\cP_{\Gr(1,4),\fwt[1],s_3s_2s_1}$ and $\cP_3=\cP_{\Gr(3,4),\fwt[3],s_1s_2s_3}$; note that these latter two polytopes are described by the minuscule construction in Section \ref{sec:minuscule} as the fundamental weights $\fwt[1]$ and $\fwt[3]$ are minuscule. Let $(1,2,3,4)$ and $(\bar4,\bar3,\bar2,\bar1)$ denote the weight bases of the minuscule fundamental representations $V_{\fwt[1]}$ and $V_{\fwt[3]}$ respectively. 

It is easy to see that the minuscule posets $\sw^{\P_1}=\mbox{\Yboxdim{6pt}\tiny\young(123)}$ and $\sw^{\P_3}=\mbox{\Yboxdim{6pt}\tiny\young(321)}$ are isomorphic, so that we obtain the identical (considered as subsets of $\R^3$) polytopes $\cP_1\cong\cP_3$ formed by the convex hull of $(0,0,0)$, $(1,0,0)$, $(1,1,0)$ and $(1,1,1)$. To obtain a Chevalley basis for $V_\varpi$, we use that $V(\fwt[1])\ot V(\fwt[3])=V(\varpi)\op\C$; writing $i\bar\jmath=i\ot\bar\jmath$, then the trivial subrepresentation is generated by the vector $1\bar1-2\bar2+3\bar3-4\bar4$. Since $\LGA_n$ is simply-laced and $\varpi$ is an adjoint weight, all non-zero weights are extremal (i.e.~in the Weyl orbit of $\varpi$) and the corresponding weight spaces are therefore one-dimensional; the remaining three-dimensional subspace is the zero weight space, naturally spanned by the three vectors
\[
a=\Chf_1(1\bar2)=1\bar1+2\bar2, \quad b=\Chf_2(2\bar3)=2\bar2+3\bar3 \qand c=\Chf_3(3\bar4)=3\bar3+4\bar4.
\]
Below we have drawn the actions by the Chevalley generators on the Chevalley bases of the representations $V_{\fwt[1]}$, $V_{\fwt[3]}$ and $V_{\varpi}$ respectively. Outside of minuscule representations, there may be non-trivial actions of Chevalley generators where the corresponding simple reflections act trivially. This occurs for the zero weight spaces in this example, which we have indicated by ``hollow'' circles, and we indicate the corresponding action using ``hollow'' lines. Note that the ``hollow'' lines are only ``one-way'' leaving the zero weight spaces, e.g.
\[\Chf_3(2\bar2 + 3\bar3) = 4\bar3\qquad\textrm{while}\qquad \Che_3(4\bar3) = 3\bar3 + 4\bar4 \neq 2\bar2+3\bar3,\]
so the hollow edge between $b$ and $4\bar3$ below should be thought of as oriented from $b$ to $4\bar3$. 
\[
\begin{array}{c}
    \begin{tikzpicture}[scale=0.625]
    \coordinate (1) at (-1.5,0);
    \coordinate (2) at (-0.5,0);
    \coordinate (3) at (0.5,0);
    \coordinate (4) at (1.5,0);
    \draw[ultra thick]
        (1)--(4)
    ;
    \draw[black,fill=black]
        (1) circle (3pt)
        (2) circle (3pt)
        (3) circle (3pt)
        (4) circle (3pt)
    ;
    \node at (1) [left=4pt] {$\fwt[1]$};
    \node at (1) [above=2pt] {$1$};
    \node at (2) [above=2pt] {$2$};
    \node at (3) [above=2pt] {$3$};
    \node at (4) [above=2pt] {$4$};
    \node at (-1,0) [below] {\tiny$1$};
    \node at (0,0) [below] {\tiny$2$};
    \node at (1,0) [below] {\tiny$3$};
\end{tikzpicture} \\[3pt]
    \begin{tikzpicture}[scale=0.625]
    \coordinate (1) at (-1.5,0);
    \coordinate (2) at (-0.5,0);
    \coordinate (3) at (0.5,0);
    \coordinate (4) at (1.5,0);
    \draw[ultra thick]
        (1)--(4)
    ;
    \draw[black,fill=black]
        (1) circle (3pt)
        (2) circle (3pt)
        (3) circle (3pt)
        (4) circle (3pt)
    ;
    \node at (1) [left=4pt] {$\fwt[3]$};
    \node at (1) [above=2pt] {$\bar4$};
    \node at (2) [above=2pt] {$\bar3$};
    \node at (3) [above=2pt] {$\bar2$};
    \node at (4) [above=2pt] {$\bar1$};
    \node at (-1,0) [below] {\tiny$3$};
    \node at (0,0) [below] {\tiny$2$};
    \node at (1,0) [below] {\tiny$1$};
\end{tikzpicture}      
\end{array}\quad
\begin{tikzpicture}[scale=0.625,baseline=0em]
    \coordinate (14) at (-3,0);
    \coordinate (24) at (-2,1);
    \coordinate (13) at (-2,-1);
    \coordinate (12) at (-1,-2);
    \coordinate (23) at (-1,0);
    \coordinate (34) at (-1,2);
    \coordinate (3344) at (0,2);
    \coordinate (2233) at (0,0);
    \coordinate (1122) at (0,-2);
    \coordinate (43) at (1,2);
    \coordinate (32) at (1,0);
    \coordinate (21) at (1,-2);
    \coordinate (31) at (2,-1);
    \coordinate (42) at (2,1);
    \coordinate (41) at (3,0);
    \draw[ultra thick]
        (14) -- (34) -- (43) -- (41)
        (14) -- (12) -- (21) -- (41)
        (13) -- (23) -- (32) -- (31)
        (24) -- (23)    (32) -- (42)
    ;
    \draw[ultra thick]
        (1122) -- (23)
        (1122) -- (32)
        (2233) -- (12)
        (2233) -- (21)
        (2233) -- (34)
        (2233) -- (43)
        (3344) -- (23)
        (3344) -- (32)
    ;
    \draw[thick, white]
        (1122) -- (23)
        (1122) -- (32)
        (2233) -- (12)
        (2233) -- (21)
        (2233) -- (34)
        (2233) -- (43)
        (3344) -- (23)
        (3344) -- (32)
    ;
    \draw[black,fill=black] 
        (14) circle (3pt)
        (24) circle (3pt)
        (13) circle (3pt)
        (12) circle (3pt)
        (23) circle (3pt)
        (34) circle (3pt)
        (43) circle (3pt)
        (32) circle (3pt)
        (21) circle (3pt)
        (31) circle (3pt)
        (42) circle (3pt)
        (41) circle (3pt)
    ;
    \draw[black, thick,fill=white] 
        (3344) circle (3pt)
        (2233) circle (3pt)
        (1122) circle (3pt)
    ;
    \node at (14) [left=4pt] {$\varpi$};
    \node at (14) [left=3pt, above] {$1\bar4$};
    \node at (24) [left=3pt, above] {$2\bar4$};
    \node at (13) [left=3pt, below] {$1\bar3$};
    \node at (34) [left=3pt, above] {$3\bar4$};
    \node at (23) [left=2] {$2\bar3$};
    \node at (12) [left=3pt, below] {$1\bar2$};
    \node at (1122) [below] {$a$};
    \node at (2233) [above=2pt] {$b$};
    \node at (3344) [above] {$c$};
    \node at (21) [right=3pt, below] {$2\bar1$};
    \node at (32) [right=2pt] {$3\bar2$};
    \node at (43) [right=3pt, above] {$4\bar3$};
    \node at (31) [right=3pt, below] {$3\bar1$};
    \node at (42) [right=3pt, above] {$4\bar2$};
    \node at (41) [right=3pt, above] {$4\bar1$};
    \node at (-2.5,.5) [below=2pt, right=-2pt] {\tiny$1$};
    \node at (-2.5,-.5) [above=2pt, right=-2pt] {\tiny$3$};
    \node at (-1.5,1.5) [below=2pt, right=-2pt] {\tiny$2$};
    \node at (-1.5,.5) [above=2pt, right=-2pt] {\tiny$3$};
    \node at (-1.5,-.5) [below=2pt, right=-2pt] {\tiny$1$};
    \node at (-1.5,-1.5) [above=2pt, right=-2pt] {\tiny$2$};
    \node at (-.5,2) [above=-2pt] {\tiny$3$};
    \node at (-.5,2) [right=2pt, below=0pt] {\tiny$2$};
    \node at (-.5,0) [right=2pt, above=0pt] {\tiny$3$};
    \node at (-.5,0) [below=-2pt] {\tiny$2$};
    \node at (0,0) [left=3pt, below=6pt] {\tiny$1$};
    \node at (0,0) [right=3pt, below=6pt] {\tiny$1$};
    \node at (-.5,-2) [right=2pt, above=0pt] {\tiny$2$};
    \node at (-.5,-2) [below=-2pt] {\tiny$1$};
    \node at (.5,2) [above=-2pt] {\tiny$3$};
    \node at (.5,2) [left=2pt, below=0pt] {\tiny$2$};
    \node at (.5,0) [left=2pt, above=0pt] {\tiny$3$};
    \node at (.5,0) [below=-2pt] {\tiny$2$};
    \node at (.5,-2) [left=2pt, above=0pt] {\tiny$2$};
    \node at (.5,-2) [below=-2pt] {\tiny$1$};
    \node at (1.5,1.5) [below=2pt, left=-2pt] {\tiny$2$};
    \node at (1.5,.5) [above=2pt, left=-2pt] {\tiny$3$};
    \node at (1.5,-.5) [below=2pt, left=-2pt] {\tiny$1$};
    \node at (1.5,-1.5) [above=2pt, left=-2pt] {\tiny$2$};
    \node at (2.5,.5) [below=2pt, left=-2pt] {\tiny$1$};
    \node at (2.5,-.5) [above=2pt, left=-2pt] {\tiny$3$};
\end{tikzpicture}
\]

To obtain the associated Chevalley polytope, we need to fix a reduced expression for $\wP$; to illustrate how well the Chevalley polytope behaves, we will consider two different reduced expressions: $\bs_1=s_3s_2s_1s_2s_3$ and $\bs_2=s_3s_1s_2s_1s_3$. 
The resulting valuations $\nu_1=\nu_{X,\varpi,\bs_1}$ and $\nu_2=\nu_{X,\varpi,\bs_2}$ of the Chevalley bases for the corresponding representations are drawn below; we write $i_1i_2i_3i_4i_5=(i_1,i_2,i_3,i_4,i_5)$ for brevity (e.g.~$00111=(0,0,1,1,1)$), and we have indicated the path taken by the monomial $\boldf$ from the highest weight vector to each weight vector in red:
\[
\begin{tikzpicture}[scale=0.625,baseline=0em]
    \coordinate (14) at (-3,0);
    \coordinate (24) at (-2,1);
    \coordinate (13) at (-2,-1);
    \coordinate (12) at (-1,-2);
    \coordinate (23) at (-1,0);
    \coordinate (34) at (-1,2);
    \coordinate (3344) at (0,2);
    \coordinate (2233) at (0,0);
    \coordinate (1122) at (0,-2);
    \coordinate (43) at (1,2);
    \coordinate (32) at (1,0);
    \coordinate (21) at (1,-2);
    \coordinate (31) at (2,-1);
    \coordinate (42) at (2,1);
    \coordinate (41) at (3,0);
    \draw
        (14) -- (34) -- (43) -- (41)
        (14) -- (12) -- (21) -- (41)
        (13) -- (23) -- (32) -- (31)
        (24) -- (23)    (32) -- (42)
    ;
    \draw
        (1122) -- (23)
        (1122) -- (32)
        (2233) -- (12)
        (2233) -- (21)
        (2233) -- (34)
        (2233) -- (43)
        (3344) -- (23)
        (3344) -- (32)
    ;
    \draw[ultra thick, red]
        (14) -- (12) -- (21) -- (41)
        (14) -- (34) -- (3344)
        (13) -- (23) -- (2233)
        (32) -- (42)
    ;
    \draw[ultra thick, red]
        (2233) -- (43)
        (1122) -- (32)
    ;
    \draw[black,fill=black] 
        (14) circle (3pt)
        (24) circle (3pt)
        (13) circle (3pt)
        (12) circle (3pt)
        (23) circle (3pt)
        (34) circle (3pt)
        (43) circle (3pt)
        (32) circle (3pt)
        (21) circle (3pt)
        (31) circle (3pt)
        (42) circle (3pt)
        (41) circle (3pt)
    ;
    \draw[black, thick,fill=white] 
        (3344) circle (3pt)
        (2233) circle (3pt)
        (1122) circle (3pt)
    ;
    \node[draw] at (-3.5,2) {$\nu_1$};
    \node at (14) [left=7pt, above] {\tiny$00000$};
    \node at (24) [left=7pt, above] {\tiny$00100$};
    \node at (13) [left=7pt, below] {\tiny$10000$};
    \node at (34) [left=7pt, above] {\tiny$00110$};
    \node at (23) [left] {\tiny$10100$};
    \node at (12) [left=7pt, below] {\tiny$11000$};
    \node at (1122) [below] {\tiny$11100$};
    \node at (2233) [below] {\tiny$10110$};
    \node at (3344) [above] {\tiny$00111$};
    \node at (21) [right=7pt, below] {\tiny$11200$};
    \node at (32) [right] {\tiny$11110$};
    \node at (43) [right=7pt, above] {\tiny$10111$};
    \node at (31) [right=7pt, below] {\tiny$11210$};
    \node at (42) [right=7pt, above] {\tiny$11111$};
    \node at (41) [right=7pt, above] {\tiny$11211$};
\end{tikzpicture} 
\qquad 
\begin{tikzpicture}[scale=0.625,baseline=0em]
    \coordinate (14) at (-3,0);
    \coordinate (24) at (-2,1);
    \coordinate (13) at (-2,-1);
    \coordinate (12) at (-1,-2);
    \coordinate (23) at (-1,0);
    \coordinate (34) at (-1,2);
    \coordinate (3344) at (0,2);
    \coordinate (2233) at (0,0);
    \coordinate (1122) at (0,-2);
    \coordinate (43) at (1,2);
    \coordinate (32) at (1,0);
    \coordinate (21) at (1,-2);
    \coordinate (31) at (2,-1);
    \coordinate (42) at (2,1);
    \coordinate (41) at (3,0);
    \draw
        (14) -- (34) -- (43) -- (41)
        (14) -- (12) -- (21) -- (41)
        (13) -- (23) -- (32) -- (31)
        (24) -- (23)    (32) -- (42)
    ;
    \draw
        (1122) -- (23)
        (1122) -- (32)
        (2233) -- (12)
        (2233) -- (21)
        (2233) -- (34)
        (2233) -- (43)
        (3344) -- (23)
        (3344) -- (32)
    ;
    \draw[ultra thick, red]
        (14) -- (13) -- (23) -- (32) -- (31) -- (41)
        (14) -- (34) -- (3344)
        (13) -- (12) -- (1122)
        (32) -- (42)
    ;
    \draw[ultra thick, red]
        (2233) -- (43)
        (2233) -- (21)
    ;
    \draw[black,fill=black] 
        (14) circle (3pt)
        (24) circle (3pt)
        (13) circle (3pt)
        (12) circle (3pt)
        (23) circle (3pt)
        (34) circle (3pt)
        (43) circle (3pt)
        (32) circle (3pt)
        (21) circle (3pt)
        (31) circle (3pt)
        (42) circle (3pt)
        (41) circle (3pt)
    ;
    \draw[black, thick,fill=white] 
        (3344) circle (3pt)
        (2233) circle (3pt)
        (1122) circle (3pt)
    ;
    \node[draw] at (-3.5,2) {$\nu_2$};
    \node at (14) [left=7pt, above] {\tiny$00000$};
    \node at (24) [left=7pt, above] {\tiny$01000$};
    \node at (13) [left=7pt, below] {\tiny$10000$};
    \node at (34) [left=7pt, above] {\tiny$01100$};
    \node at (23) [left] {\tiny$11000$};
    \node at (12) [left=7pt, below] {\tiny$10100$};
    \node at (1122) [below] {\tiny$10110$};
    \node at (2233) [left=8pt, above] {\tiny$11100$};
    \node at (3344) [above] {\tiny$01101$};
    \node at (21) [right=7pt, below] {\tiny$11110$};
    \node at (32) [right] {\tiny$11200$};
    \node at (43) [right=7pt, above] {\tiny$11101$};
    \node at (31) [right=7pt, below] {\tiny$11210$};
    \node at (42) [right=7pt, above] {\tiny$11201$};
    \node at (41) [right=7pt, above] {\tiny$11211$};
\end{tikzpicture}
\]
We confirm using Polymake that the volumes of both resulting polytopes $\cP_{\X,\varpi,\bs_1}$ and $\cP_{\X,\varpi,\bs_2}$ agree with the degree of $\X\hookrightarrow\PS\bigl(V(\varpi)\bigr)$, namely they are all $20$. Hence, both polytopes are Newton-Okounkov bodies for $\X$ with respect to the corresponding valuations, and thus Conjecture \ref{conj:nobody} holds in this case. We furthermore checked in Polymake that these polytopes possess the integer decomposition property, which further verifies Conjecture \ref{conj:khovanskii}.

Finally, to verify Conjecture \ref{conj:decomposition}, we embed the minuscule Chevalley polytopes $\cP_1$ and $\cP_3$ according to the reduced expressions $\bs_1$ and $\bs_2$ and take the Minkowski sums of the images. For $\bs_1=s_3s_2s_1s_2s_3$ we find the embedded polytopes (using the same notation as in the above diagrams) as follows, where the entries in bold represent the embedding:
\[
\iota_1(\cP_1) = \conv\{00{\bf000},00{\bf100},00{\bf110},00{\bf111}\};\quad
\iota_1(\cP_3) = \conv\{{\bf000}00,{\bf100}00,{\bf110}00,{\bf111}00\}.
\]
Taking the Minkowski sum, we find:
\[
\iota_1(\cP_1)+\iota_1(\cP_3) = 
\conv\left\{\!\!\!\begin{array}{l}
    00000,10000,11000, 00100,10100,11200, 00110, \\
    10110,11110,11210, 00111,10111,11111,11211
\end{array}\!\!\!\right\} = \cP_{X,\varpi,\bs_1}
\]
confirming the conjecture.

On the other hand, considering $\bs_2=s_3s_1s_2s_1s_3$, we find the embedded polytopes
\[
\iota_2(\cP_1) = \conv\{0{\bf00}0{\bf0},0{\bf10}0{\bf0},0{\bf11}0{\bf0},0{\bf11}0{\bf1}\};\quad
\iota_2(\cP_3) = \conv\{{\bf0}0{\bf00}0,{\bf1}0{\bf00}0,{\bf1}0{\bf10}0,{\bf1}0{\bf11}0\}
\]
and hence the Minkowski sum:
\ali{
\iota_2(\cP_1)+\iota_2(\cP_3) = 
\conv\left\{\!\!\!\begin{array}{l}
00000,01000,01100,01101,
10000,11000,11101,\\
10100,11200,11201,
10110,11110,11210,11211
\end{array}\!\!\!\right\} = \cP_{\X,\varpi,\bs_2}
}
again confirming Conjecture \ref{conj:decomposition}. We will keep the same conventions for the remainder of the appendix.

\addtocontents{toc}{\SkipTocEntry}
\subsection{A maximal orthogonal Grassmannian}
The second example we present is the maximal orthogonal Grassmannian $\X=\OG(2,5)=\Spin_5/\P_2$, where $\Spin_5$ is of type $\LGB_2$, with the embedding into $\bP(V_\varpi)$ for $\varpi=3\fwt[2]$. In this case, we note that $V_\varpi=\Sym^3(V_{\fwt[2]})$, where $V_{\fwt[2]}$ is a minuscule representation of dimension $4$, and that $\bs=s_2s_1s_2$ is the unique reduced expression for $\wP$.

In this case, we will compare the Chevalley polytope $\cP_{\X,\varpi,\bs}$ with the minuscule Chevalley polytope $\cP_m = \cP_{\X,\fwt[2]}$. We already computed $\cP_m$ in Section \ref{sec:string}, and we recall from there that $\minposet = \raisebox{-2pt}{\Yboxdim{6pt}\tiny\young(2,12)}$ and hence we $\cP_m$ is the convex hull of $(0,0,0)$, $(1,0,0)$, $(1,1,0)$ and $(1,1,1)$.

Letting $(1,2,3,4)$ denote the weight basis of the minuscule fundamental representation $V_{\fwt[2]}$ ordered from highest to lowest weight (so we have $\Chf_2:1\mapsto 2,3\mapsto4$ and $\Chf_1:2\mapsto3$),
and writing $ijk=i\odot j\odot k$ for the symmetric tensor, we remark that the actions of monomials $\boldf$ in the Chevalley generators $\Chf_1,\Chf_2$ yield a spanning yet of $V_\varpi$ which is not a basis. Below, we draw a subset $\{\boldf\cdot v_\varpi\}$ which forms a (Chevalley) basis and whose valuations exhaust all possible valuations of degree $1$ elements. Note that we also only drew a subset of the actions of the Chevalley generators $\Chf_i$ and suppressed the labels of the corresponding edges as these can easily deduced from comparing the basis vectors.
\[
\begin{tikzpicture}[scale=.625]
    \coordinate (111) at (-5,-1);
    \coordinate (112) at (-4,0);
    \coordinate (113) at (-3,-1);
    \coordinate (122) at (-3,1);
    \coordinate (114) at (-2,-1);
    \coordinate (222) at (-2,2);
    \coordinate (123-114) at (-2,1);
    \coordinate (133) at (-.66,1);
    \coordinate (223-124) at (-.66,2);
    \coordinate (124) at (-.66,0);
    \coordinate (233-134) at (.66,2);
    \coordinate (224) at (.66,1);
    \coordinate (134) at (.66,0);
    \coordinate (144) at (2,-1);
    \coordinate (333) at (2,2);
    \coordinate (234-144) at (2,1);
    \coordinate (244) at (3,-1);
    \coordinate (334) at (3,1);
    \coordinate (344) at (4,0);
    \coordinate (444) at (5,-1);
    \draw[ultra thick]
        (111) -- (112) -- (113) -- (114) -- (124) -- (134) -- (144) -- (244) -- (344) -- (444)
        (112) -- (122) -- (123-114) -- (133) -- (134)    (124) -- (224) -- (234-144) -- (334) -- (344)
        (122) -- (222) -- (223-124) -- (233-134) -- (333) -- (334)
        (123-114) -- (223-124) (233-134) -- (234-144)
    ;
    \draw[black, fill=black]
        (111) circle (3pt)
        (222) circle (3pt)
        (333) circle (3pt)
        (444) circle (3pt)
    ;
    \draw[black, fill=lightgray]
        (112) circle (3pt)
        (113) circle (3pt)
        (122) circle (3pt)
        (114) circle (3pt)
        (133) circle (3pt)
        (224) circle (3pt)
        (144) circle (3pt)
        (244) circle (3pt)
        (334) circle (3pt)
        (344) circle (3pt)
    ;
    \draw[black, fill=lightgray]
        (123-114) circle (3pt)
        (124) circle (3pt)
        (223-124) circle (3pt)
        (134) circle (3pt)
        (233-134) circle (3pt)
        (234-144) circle (3pt)
    ;
    \node at (111) [left=5pt, below=0pt] {$\varpi$};
    \node at (111) [left=5pt, above] {$111$};
    \node at (112) [left=5pt, above] {$112$};
    \node at (113) [left=5pt, below] {$113$};
    \node at (122) [left=5pt, above] {$122$};
    \node at (114) [right=5pt, below] {$v_1$};
    \node at (222) [left=5pt, above] {$222$};
    \node at (123-114) [below] {$123$};
    \node at (124) [right=5pt, below] {$v_2$};
    \node at (133) [above] {$133$};
    \node at (223-124) [above] {$223$};
    \node at (134) [left=5pt, below] {$v_3$};
    \node at (233-134) [above] {$233$};
    \node at (224) [above] {$224$};
    \node at (144) [left=5pt, below] {$v_4$};
    \node at (333) [right=5pt, above] {$333$};
    \node at (234-144) [below] {$234$};
    \node at (244) [right=5pt, below] {$244$};
    \node at (334) [right=5pt, above] {$334$};
    \node at (344) [right=5pt, above] {$344$};
    \node at (444) [right=5pt, above] {$444$};
\end{tikzpicture}
\begin{tikzpicture}[scale=.625]
    \coordinate (111) at (-5,-1);
    \coordinate (112) at (-4,0);
    \coordinate (113) at (-3,-1);
    \coordinate (122) at (-3,1);
    \coordinate (114) at (-2,-1);
    \coordinate (222) at (-2,2);
    \coordinate (123-114) at (-2,1);
    \coordinate (133) at (-.66,1);
    \coordinate (223-124) at (-.66,2);
    \coordinate (124) at (-.66,0);
    \coordinate (233-134) at (.66,2);
    \coordinate (224) at (.66,1);
    \coordinate (134) at (.66,0);
    \coordinate (144) at (2,-1);
    \coordinate (333) at (2,2);
    \coordinate (234-144) at (2,1);
    \coordinate (244) at (3,-1);
    \coordinate (334) at (3,1);
    \coordinate (344) at (4,0);
    \coordinate (444) at (5,-1);
    \draw
        (111) -- (112) -- (113) -- (114) -- (124) -- (134) -- (144) -- (244) -- (344) -- (444)
        (112) -- (122) -- (123-114) -- (133) -- (134)    (124) -- (224) -- (234-144) -- (334) -- (344)
        (122) -- (222) -- (223-124) -- (233-134) -- (333) -- (334)
        (123-114) -- (223-124) (233-134) -- (234-144)
    ;
    \draw[ultra thick, red]
        (111) -- (222) -- (333) -- (444)
        (112) -- (113) -- (114)
        (122) -- (123-114) -- (133) -- (134) -- (144)
        (123-114) -- (124)
        (223-124) -- (224)
        (233-134) -- (234-144) -- (244)
    ;
    \draw[black, fill=black]
        (111) circle (3pt)
        (222) circle (3pt)
        (333) circle (3pt)
        (444) circle (3pt)
    ;
    \draw[black, fill=lightgray]
        (112) circle (3pt)
        (113) circle (3pt)
        (122) circle (3pt)
        (114) circle (3pt)
        (133) circle (3pt)
        (224) circle (3pt)
        (144) circle (3pt)
        (244) circle (3pt)
        (334) circle (3pt)
        (344) circle (3pt)
    ;
    \draw[black, fill=lightgray]
        (123-114) circle (3pt)
        (124) circle (3pt)
        (223-124) circle (3pt)
        (134) circle (3pt)
        (233-134) circle (3pt)
        (234-144) circle (3pt)
    ;
    \node[draw] at (-4.5,1.5) {$\nu$};
    \node at (111) [left=5pt, above] {\tiny$000$};
    \node at (112) [left=5pt, above] {\tiny$100$};
    \node at (113) [below] {\tiny$110$};
    \node at (122) [left=5pt, above] {\tiny$200$};
    \node at (114) [below] {\tiny$111$};
    \node at (222) [left=5pt, above] {\tiny$300$};
    \node at (123-114) [left=5pt, below] {\tiny$210$};
    \node at (124) [right=3pt, below] {\tiny$211$};
    \node at (133) [above] {\tiny$220$};
    \node at (223-124) [above] {\tiny$310$};
    \node at (134) [left=3pt, below] {\tiny$221$};
    \node at (233-134) [above] {\tiny$320$};
    \node at (224) [right=3pt, below] {\tiny$311$};
    \node at (144) [below] {\tiny$222$};
    \node at (333) [right=5pt, above] {\tiny$330$};
    \node at (234-144) [left=4pt, below] {\tiny$321$};
    \node at (244) [below] {\tiny$322$};
    \node at (334) [right=5pt, above] {\tiny$331$};
    \node at (344) [right=5pt, above] {\tiny$332$};
    \node at (444) [right=5pt, above] {\tiny$333$};
\end{tikzpicture}
\]
where
\[
    v_1 = 2\cdot 123 + 114, \quad v_2 = 223 + 124, \quad 
    v_3 = 223 + 2\cdot 134, \qand v_4 = 144 + 2\cdot 234.
\]
We find that
\[
\cP_{\X,\varpi,\bs} = \conv\{000,300,330,333\} = 3\cdot\cP_m,
\]
showing that Conjectures \ref{conj:nobody} and \ref{conj:decomposition} hold, since $\vol(\cP_{\X,\varpi,\bs})=27=\deg(\X)$. Furthermore, the lattice points of $\cP_{\X,\varpi,\bs}$ form a Hilbert basis for the cone over $\cP_{\X,\varpi,\bs}$, verifying Conjecture \ref{conj:khovanskii}. 

We note that special care must be taken when computing the valuations in this example. Consider the vectors resulting from the following actions:
\begin{align*}
    a &= \tfrac1{3!}\Chf_1\Chf_2^3\cdot 111 = 223\\
    b &= \tfrac12\Chf_2\Chf_1\Chf_2^2\cdot 111 = 223 + 124\\
    c &= \tfrac12\Chf_2^2\Chf_1\Chf_2 \cdot 111 = 223 + 2\times 124.
\end{align*} 
Since $c\neq 0$, it may seem like the valuation of the corresponding coordinate function $p_c$ should be $112$, corresponding to the monomial $a_1a_2a_3^2$. However, these three vectors span a two-dimensional weight space and in particular satisfy the relation, $-a + 2b - c=0$. Hence, no matter which basis we select, the restriction $p_c|_{\Xo}$ must contain one of the terms (up to scaling) $a_1^3a_2$ or $a_1^2a_2a_3$ corresponding to $a$ or $b$ above, and hence its valuation will be $310$ or $211$, which is already included in the valuations listed above.  

\addtocontents{toc}{\SkipTocEntry}
\subsection{The coadjoint \texorpdfstring{$\LGF_4$}{F4}-Grassmannian}
The third example we present is the coadjoint $\LGF_4$-Grassmannian $\LGF_4^\SC/P_4$, with the minimal embedding into $\bP(V_{\fwt[4]})$. Here only Conjectures \ref{conj:nobody} and \ref{conj:khovanskii} apply, and we now demonstrate that both hold.

Since the fundamental weight $\fwt[4]$ is a \emph{coadjoint weight}, i.e.~$\dfwt[4]$ is adjoint in $\LGF_4^\vee$, the nonzero weight vectors form a single Weyl orbit, just as in the case of the adjoint flag variety $\Fl(1,3;4)$ of type $\LGA_3$. Hence, we have a natural weight basis outside the zero weight space, as well as a natural choice of basis for zero-weight space by $\Chf_3(v_{\sr_3})$ and $\Chf_3(v_{\sr_3})$ for $v_{\sr_i}$ the basis vectors of weight $\sr_i$. Again, we have one-way edges (drawn as ``hollow lines'') from the zero weight spaces to the nonzero-weight spaces.
We now draw the actions of the Chevalley generators on the highest weight vector with weight $\fwt[4]$ below:
\[
\begin{tikzpicture}[scale=.825]
    \coordinate (1232) at (-8,0);
    \coordinate (1231) at (-7,0);
    \coordinate (1221) at (-6,0);
    \coordinate (1121) at (-5,0);
    \coordinate (1111) at (-4,1);
    \coordinate (0121) at (-4,-1);
    \coordinate (1110) at (-3,2);
    \coordinate (0111) at (-3,0);
    \coordinate (0110) at (-2,1);
    \coordinate (0011) at (-2,-1);
    \coordinate (0010) at (-1,0);
    \coordinate (0001) at (-1,-2);
    \coordinate (h3) at (0,0);
    \coordinate (h4) at (0,-2);
    \coordinate (-0010) at (1,0);
    \coordinate (-0001) at (1,-2);
    \coordinate (-0110) at (2,1);
    \coordinate (-0011) at (2,-1);
    \coordinate (-1110) at (3,2);
    \coordinate (-0111) at (3,0);
    \coordinate (-0121) at (4,-1);
    \coordinate (-1111) at (4,1);
    \coordinate (-1121) at (5,0);
    \coordinate (-1221) at (6,0);
    \coordinate (-1231) at (7,0);
    \coordinate (-1232) at (8,0);
    \draw[ultra thick]
        (1232) -- (1121) -- (1110) -- (0010) -- (-0010) -- (-1110) -- (-1121) -- (-1232)
        (1121) -- (0121) -- (0111) -- (0001) -- (-0001) -- (-0111) -- (-0121) -- (-1121)
        (1111) -- (0111) -- (0110)    (-0110) -- (-0111) -- (-1111)
        (0011) -- (0010)    (-0010) -- (-0011)
    ;
    \draw[ultra thick]
        (h3) -- (0001)
        (h3) -- (-0001)
        (h4) -- (0010)
        (h4) -- (-0010)
    ;
    \draw[thick, white]
        (h3) -- (0001)
        (h3) -- (-0001)
        (h4) -- (0010)
        (h4) -- (-0010)
    ;
    \draw[black, fill=black]
        (1232) circle (3pt)
        (1231) circle (3pt)
        (1221) circle (3pt)
        (1121) circle (3pt)
        (1111) circle (3pt)
        (0121) circle (3pt)
        (1110) circle (3pt)
        (0111) circle (3pt)
        (0110) circle (3pt)
        (0011) circle (3pt)
        (0010) circle (3pt)
        (0001) circle (3pt)
        (-0010) circle (3pt)
        (-0001) circle (3pt)
        (-0110) circle (3pt)
        (-0011) circle (3pt)
        (-1110) circle (3pt)
        (-0111) circle (3pt)
        (-0121) circle (3pt)
        (-1111) circle (3pt)
        (-1121) circle (3pt)
        (-1221) circle (3pt)
        (-1231) circle (3pt)
        (-1232) circle (3pt)
    ;
    \draw[black, fill=white]
        (h3) circle (3pt)
        (h4) circle (3pt)
    ;
    \node at (1232) [left] {$\fwt[4]$};
    \node at (-7.5,0) [below=-2pt] {\tiny$4$};
    \node at (-6.5,0) [below=-2pt] {\tiny$3$};
    \node at (-5.5,0) [below=-2pt] {\tiny$2$};
    \node at (-4.5,.5) [right=3pt, below=-2pt] {\tiny$3$};
    \node at (-4.5,-.5) [right=3pt, above=-2pt] {\tiny$1$};
    \node at (-3.5,1.5) [right=3pt, below=-2pt] {\tiny$4$};
    \node at (-3.5,.5) [left=3pt, below=-2pt] {\tiny$1$};
    \node at (-3.5,-.5) [left=3pt, above=-2pt] {\tiny$3$};
    \node at (-2.5,1.5) [left=3pt, below=-2pt] {\tiny$1$};
    \node at (-2.5,.5) [right=3pt, below=-2pt] {\tiny$4$};
    \node at (-2.5,-.5) [right=3pt, above=-2pt] {\tiny$2$};
    \node at (-1.5,.5) [left=3pt, below=-2pt] {\tiny$2$};
    \node at (-1.5,-.5) [right=3pt, below=-2pt] {\tiny$4$};
    \node at (-1.5,-1.5) [right=3pt, above=-2pt] {\tiny$3$};
    \node at (-.5,0) [below=-2pt] {\tiny$3$};
    \node at (-.5,-2) [above=-2pt] {\tiny$4$};
    \node at (.5,0) [below=-2pt] {\tiny$3$};
    \node at (.5,-2) [above=-2pt] {\tiny$4$};
    \node at (1.5,.5) [right=3pt, below=-2pt] {\tiny$2$};
    \node at (1.5,-.5) [left=3pt, below=-2pt] {\tiny$4$};
    \node at (1.5,-1.5) [left=3pt, above=-2pt] {\tiny$3$};
    \node at (2.5,1.5) [right=3pt, below=-2pt] {\tiny$1$};
    \node at (2.5,.5) [left=3pt, below=-2pt] {\tiny$4$};
    \node at (2.5,-.5) [left=3pt, above=-2pt] {\tiny$2$};
    \node at (3.5,1.5) [left=3pt, below=-2pt] {\tiny$4$};
    \node at (3.5,.5) [right=3pt, below=-2pt] {\tiny$1$};
    \node at (3.5,-.5) [right=3pt, above=-2pt] {\tiny$3$};
    \node at (4.5,.5) [left=3pt, below=-2pt] {\tiny$3$};
    \node at (4.5,-.5) [left=3pt, above=-2pt] {\tiny$1$};
    \node at (5.5,0) [below=-2pt] {\tiny$2$};
    \node at (6.5,0) [below=-2pt] {\tiny$3$};
    \node at (7.5,0) [below=-2pt] {\tiny$4$};
\end{tikzpicture}
\]

There are two types of reduced expression for $\wP$ up to commutations:
\[
\bs_1 = s_4s_3s_2s_1s_3s_2(s_4s_3s_4)s_2s_3s_1s_2s_3s_4\qquad\textrm{and}\qquad
\bs_1 = s_4s_3s_2s_1s_3s_2(s_3s_4s_3)s_2s_3s_1s_2s_3s_4
\]
and we find the following respective valuations $\nu_i=\nu_{\X,\varpi,\bs_i}$ (where in addition to the usual notation we abbreviate recurring values by powers, e.g.~$0^41^3=0000111$):
\[
\begin{tikzpicture}[scale=.825]
    \coordinate (1232) at (-8,0);
    \coordinate (1231) at (-7,0);
    \coordinate (1221) at (-6,0);
    \coordinate (1121) at (-5,0);
    \coordinate (1111) at (-4,1);
    \coordinate (0121) at (-4,-1);
    \coordinate (1110) at (-3,2);
    \coordinate (0111) at (-3,0);
    \coordinate (0110) at (-2,1);
    \coordinate (0011) at (-2,-1);
    \coordinate (0010) at (-1,0);
    \coordinate (0001) at (-1,-2);
    \coordinate (h3) at (0,0);
    \coordinate (h4) at (0,-2);
    \coordinate (-0010) at (1,0);
    \coordinate (-0001) at (1,-2);
    \coordinate (-0110) at (2,1);
    \coordinate (-0011) at (2,-1);
    \coordinate (-1110) at (3,2);
    \coordinate (-0111) at (3,0);
    \coordinate (-0121) at (4,-1);
    \coordinate (-1111) at (4,1);
    \coordinate (-1121) at (5,0);
    \coordinate (-1221) at (6,0);
    \coordinate (-1231) at (7,0);
    \coordinate (-1232) at (8,0);
    \draw
        (1232) -- (1121) -- (1110) -- (0010) -- (-0010) -- (-1110) -- (-1121) -- (-1232)
        (1121) -- (0121) -- (0111) -- (0001) -- (-0001) -- (-0111) -- (-0121) -- (-1121)
        (1111) -- (0111) -- (0110)    (-0110) -- (-0111) -- (-1111)
        (0011) -- (0010)    (-0010) -- (-0011)
    ;
    \draw
        (h3) -- (0001)
        (h3) -- (-0001)
        (h4) -- (0010)
        (h4) -- (-0010)
    ;
    \draw[ultra thick, red]
        (1232) -- (1121) -- (0121) -- (0111) -- (0011) -- (0010) -- (-0010) -- (-0011) -- (-0111) -- (-0121) -- (-1121) -- (-1232)
        (1121) -- (1110)
        (0111) -- (0110)
        (0011) -- (0001) -- (h4)
        (-0010) -- (-1110)
        (-1111) -- (-0111)
    ;
    \draw[ultra thick, red]
        (h3) -- (-0001)
    ;
    \draw[black, fill=black]
        (1232) circle (3pt)
        (1231) circle (3pt)
        (1221) circle (3pt)
        (1121) circle (3pt)
        (1111) circle (3pt)
        (0121) circle (3pt)
        (1110) circle (3pt)
        (0111) circle (3pt)
        (0110) circle (3pt)
        (0011) circle (3pt)
        (0010) circle (3pt)
        (0001) circle (3pt)
        (-0010) circle (3pt)
        (-0001) circle (3pt)
        (-0110) circle (3pt)
        (-0011) circle (3pt)
        (-1110) circle (3pt)
        (-0111) circle (3pt)
        (-0121) circle (3pt)
        (-1111) circle (3pt)
        (-1121) circle (3pt)
        (-1221) circle (3pt)
        (-1231) circle (3pt)
        (-1232) circle (3pt)
    ;
    \draw[black, fill=white]
        (h3) circle (3pt)
        (h4) circle (3pt)
    ;
    \node[draw] at (-6.5,1.25) {$\nu_1$};
    \node at (1232) [below] {\tiny$0^{15}$};
    \node at (1231) [above] {\tiny$10^{14}$};
    \node at (1221) [below] {\tiny$1^20^{13}$};
    \node at (1121) [left=3pt, above] {\tiny$1^30^{12}$};
    \node at (1111) [left=7pt, above] {\tiny$1^3010^{10}$};
    \node at (1110) [right] {\tiny$1^301010^8$};
    \node at (0121) [below] {\tiny$1^40^{11}$};
    \node at (0111) [right] {\tiny$1^50^{10}$};
    \node at (0110) [right=10pt, above] {\tiny$1^5010^8$};
    \node at (0011) [left=6pt, below] {\tiny$1^60^9$};
    \node at (0001) [left] {\tiny$1^6010^7$};
    \node at (h4) [below] {\tiny$1^601^20^6$};
    \node at (0010) [right=4pt, above] {\tiny$1^70^8$};
    \node at (h3) [above] {\tiny$1^80^7$};
    \node at (-0001) [right] {\tiny$1^90^6$};
    \node at (-0010) [left=2pt, above] {\tiny$1^720^7$};
    \node at (-0110) [left=7pt, above] {\tiny$1^72010^5$};
    \node at (-1110) [right] {\tiny$1^7201010^3$};
    \node at (-0011) [right=7pt, below] {\tiny$1^7210^6$};
    \node at (-0111) [right] {\tiny$1^721^20^5$};
    \node at (-1111) [above=2pt, right] {\tiny$1^721^2010^3$};
    \node at (-0121) [below] {\tiny$1^721^30^4$};
    \node at (-1121) [right=10pt, above] {\tiny$1^721^40^3$};
    \node at (-1221) [below] {\tiny$1^721^50^2$};
    \node at (-1231) [above] {\tiny$1^721^60$};
    \node at (-1232) [below] {\tiny$1^721^7$};
\end{tikzpicture}
\]
\[
\begin{tikzpicture}[scale=.825]
    \coordinate (1232) at (-8,0);
    \coordinate (1231) at (-7,0);
    \coordinate (1221) at (-6,0);
    \coordinate (1121) at (-5,0);
    \coordinate (1111) at (-4,1);
    \coordinate (0121) at (-4,-1);
    \coordinate (1110) at (-3,2);
    \coordinate (0111) at (-3,0);
    \coordinate (0110) at (-2,1);
    \coordinate (0011) at (-2,-1);
    \coordinate (0010) at (-1,0);
    \coordinate (0001) at (-1,-2);
    \coordinate (h3) at (0,0);
    \coordinate (h4) at (0,-2);
    \coordinate (-0010) at (1,0);
    \coordinate (-0001) at (1,-2);
    \coordinate (-0110) at (2,1);
    \coordinate (-0011) at (2,-1);
    \coordinate (-1110) at (3,2);
    \coordinate (-0111) at (3,0);
    \coordinate (-0121) at (4,-1);
    \coordinate (-1111) at (4,1);
    \coordinate (-1121) at (5,0);
    \coordinate (-1221) at (6,0);
    \coordinate (-1231) at (7,0);
    \coordinate (-1232) at (8,0);
    \draw
        (1232) -- (1121) -- (1110) -- (0010) -- (-0010) -- (-1110) -- (-1121) -- (-1232)
        (1121) -- (0121) -- (0111) -- (0001) -- (-0001) -- (-0111) -- (-0121) -- (-1121)
        (1111) -- (0111) -- (0110)    (-0110) -- (-0111) -- (-1111)
        (0011) -- (0010)    (-0010) -- (-0011)
    ;
    \draw
        (h3) -- (0001)
        (h3) -- (-0001)
        (h4) -- (0010)
        (h4) -- (-0010)
    ;
    \draw[ultra thick, red]
        (1232) -- (1121) -- (0121) -- (0111) -- (0011) -- (0010) -- (h3)
        (-0011) -- (-0111) -- (-0121) -- (-1121) -- (-1232)
        (1121) -- (1110)
        (0111) -- (0110)
        (0011) -- (0001) -- (-0001) -- (-0011)
        (-0010) -- (-1110)
        (-1111) -- (-0111)
    ;
    \draw[ultra thick, red]
        (h4) -- (-0010)
    ;
    \draw[black, fill=black]
        (1232) circle (3pt)
        (1231) circle (3pt)
        (1221) circle (3pt)
        (1121) circle (3pt)
        (1111) circle (3pt)
        (0121) circle (3pt)
        (1110) circle (3pt)
        (0111) circle (3pt)
        (0110) circle (3pt)
        (0011) circle (3pt)
        (0010) circle (3pt)
        (0001) circle (3pt)
        (-0010) circle (3pt)
        (-0001) circle (3pt)
        (-0110) circle (3pt)
        (-0011) circle (3pt)
        (-1110) circle (3pt)
        (-0111) circle (3pt)
        (-0121) circle (3pt)
        (-1111) circle (3pt)
        (-1121) circle (3pt)
        (-1221) circle (3pt)
        (-1231) circle (3pt)
        (-1232) circle (3pt)
    ;
    \draw[black, fill=white]
        (h3) circle (3pt)
        (h4) circle (3pt)
    ;
    \node[draw] at (-6.5,1.25) {$\nu_2$};
    \node at (1232) [below] {\tiny$0^{15}$};
    \node at (1231) [above] {\tiny$10^{14}$};
    \node at (1221) [below] {\tiny$1^20^{13}$};
    \node at (1121) [left=3pt, above] {\tiny$1^30^{12}$};
    \node at (1111) [left=7pt, above] {\tiny$1^3010^{10}$};
    \node at (1110) [right] {\tiny$1^301010^8$};
    \node at (0121) [below] {\tiny$1^40^{11}$};
    \node at (0111) [right] {\tiny$1^50^{10}$};
    \node at (0110) [right=10pt, above] {\tiny$1^5010^8$};
    \node at (0011) [left=6pt, below] {\tiny$1^60^9$};
    \node at (0001) [left] {\tiny$1^70^8$};
    \node at (h4) [below] {\tiny$1^80^7$};
    \node at (0010) [right=6pt,above] {\tiny$1^6010^7$};
    \node at (h3) [below] {\tiny$1^601^20^6$};
    \node at (-0001) [right] {\tiny$1^720^7$};
    \node at (-0010) [left=2pt,above] {\tiny$1^90^6$};
    \node at (-0110) [left=7pt, above] {\tiny$1^{10}0^5$};
    \node at (-1110) [right] {\tiny$1^{10}010^3$};
    \node at (-0011) [right=7pt, below] {\tiny$1^7210^6$};
    \node at (-0111) [right] {\tiny$1^721^20^5$};
    \node at (-1111) [above=2pt, right] {\tiny$1^721^2010^3$};
    \node at (-0121) [below] {\tiny$1^721^30^4$};
    \node at (-1121) [right=10pt, above] {\tiny$1^721^40^3$};
    \node at (-1221) [below] {\tiny$1^721^50^2$};
    \node at (-1231) [above] {\tiny$1^721^60$};
    \node at (-1232) [below] {\tiny$1^721^7$};
\end{tikzpicture}
\]

We verify that the first polytope $\cP_{\X,\dfwt[4],\bs_1}$ has volume $78=\deg(\X)$ and its lattice points form a Hilbert basis, so both Conjectures \ref{conj:nobody} and \ref{conj:khovanskii} are satisfied. However, $\cP_{\X,\dfwt[4],\bs_2}$ has normalized volume $63$, so is not a Newton-Okounkov body for $\X$.

\addtocontents{toc}{\SkipTocEntry}
\subsection{The adjoint \texorpdfstring{$\LGG_2$}{G2}-Grassmannian}
The fourth example we present is the $\LGG_2$-adjoint Grassmannian $\LGG_2^\SC/P_2$, with the minimal embedding into $\bP(V_{\fwt[2]})$. As the fundamental weight $\fwt[2]$ is adjoint but \emph{not} coadjoint, we find that the nonzero weights are the roots and the corresponding spaces are all one-dimensional, but they fall into \emph{two} Weyl orbits: the extremal orbit consists of all long roots; and the non-extremal orbit consisting of all short roots. A basis for the nonzero-weight spaces is therefore determined up to scaling, and we find a natural basis for the zero-weight spaces as usual by considering $\Chf_i(v_{\sr_i})$ for $i\in\{1,2\}$. We find the following Chevalley actions, with the ``hollow'' one-directional edges leaving the zero-weight spaces:
\[
\begin{tikzpicture}[scale = .625]
    \coordinate (32) at (-5,3);
    \coordinate (31) at (-4,3);
    \coordinate (21) at (-3,2);
    \coordinate (11) at (-2,1);
    \coordinate (10) at (-1,2);
    \coordinate (01) at (-1,0);
    \coordinate (h1) at (0,2);
    \coordinate (h2) at (0,0);
    \coordinate (-10) at (1,2);
    \coordinate (-01) at (1,0);
    \coordinate (-11) at (2,1);
    \coordinate (-21) at (3,2);
    \coordinate (-31) at (4,3);
    \coordinate (-32) at (5,3);
    \draw[ultra thick]
        (32) -- (31) -- (01) -- (-01) -- (-31) -- (-32)
        (11) -- (10) -- (-10) -- (-11)
    ;
    \draw[ultra thick]
        (h1) -- (01)
        (h1) -- (-01)
        (h2) -- (10)
        (h2) -- (-10)
    ;
    \draw[thick, white]
        (h1) -- (01)
        (h1) -- (-01)
        (h2) -- (10)
        (h2) -- (-10)
    ;
    \draw[black, fill=black]
        (32) circle (3pt)
        (31) circle (3pt)
        (01) circle (3pt)
        (-01) circle (3pt)
        (-31) circle (3pt)
        (-32) circle (3pt)
    ;
    \draw[black, fill=lightgray]
        (21) circle (3pt)
        (11) circle (3pt)
        (10) circle (3pt)
        (-10) circle (3pt)
        (-11) circle (3pt)
        (-21) circle (3pt)
    ;
    \draw[black, fill=white]
        (h1) circle (3pt)
        (h2) circle (3pt)
    ; 
    \node at (32) [left] {$\fwt[2]$};
    \node at (-4.5,3) [below=-2pt] {\tiny$2$};
    \node at (-3.5,2.5) [left=3pt, below=-3pt] {\tiny$1$};
    \node at (-2.5,1.5) [left=3pt, below=-3pt] {\tiny$1$};
    \node at (-1.5,.5) [left=3pt, below=-3pt] {\tiny$1$};
    \node at (-1.5,1.5) [left=3pt, above=-3pt] {\tiny$2$};
    \node at (-.5,0) [below=-2pt] {\tiny$2$};
    \node at (-.5,2) [above=-2pt] {\tiny$1$};
    \node at (.5,0) [below=-2pt] {\tiny$2$};
    \node at (.5,2) [above=-2pt] {\tiny$1$};
    \node at (1.5,1.5) [right=3pt, above=-3pt] {\tiny$2$};
    \node at (1.5,.5) [right=3pt, below=-3pt] {\tiny$1$};
    \node at (2.5,1.5) [right=3pt, below=-3pt] {\tiny$1$};
    \node at (3.5,2.5) [right=3pt, below=-3pt] {\tiny$1$};
    \node at (4.5,3) [below=-2pt] {\tiny$2$};
\end{tikzpicture}
\quad
\begin{tikzpicture}[scale = .625]
    \coordinate (32) at (-5,3);
    \coordinate (31) at (-4,3);
    \coordinate (21) at (-3,2);
    \coordinate (11) at (-2,1);
    \coordinate (10) at (-1,2);
    \coordinate (01) at (-1,0);
    \coordinate (h1) at (0,2);
    \coordinate (h2) at (0,0);
    \coordinate (-10) at (1,2);
    \coordinate (-01) at (1,0);
    \coordinate (-11) at (2,1);
    \coordinate (-21) at (3,2);
    \coordinate (-31) at (4,3);
    \coordinate (-32) at (5,3);
    \draw
        (32) -- (31) -- (01) -- (-01) -- (-31) -- (-32)
        (11) -- (10) -- (-10) -- (-11)
    ;
    \draw
        (h1) -- (01)
        (h1) -- (-01)
        (h2) -- (10)
        (h2) -- (-10)
    ;
    \draw[ultra thick, red]
        (32) -- (31) -- (01) -- (-01) -- (-31) -- (-32)
        (11) -- (10) -- (h1)
    ;
    \draw[ultra thick, red]
        (h2) -- (-10)
    ;
    \draw[black, fill=black]
        (32) circle (3pt)
        (31) circle (3pt)
        (01) circle (3pt)
        (-01) circle (3pt)
        (-31) circle (3pt)
        (-32) circle (3pt)
    ;
    \draw[black, fill=lightgray]
        (21) circle (3pt)
        (11) circle (3pt)
        (10) circle (3pt)
        (-10) circle (3pt)
        (-11) circle (3pt)
        (-21) circle (3pt)
    ;
    \draw[black, fill=white]
        (h1) circle (3pt)
        (h2) circle (3pt)
    ; 
    \node[draw] at (-4.5,1) {$\nu$};
    \node at (32) [above] {\tiny$00000$};
    \node at (31) [right] {\tiny$10000$};
    \node at (21) [left] {\tiny$11000$};
    \node at (11) [left] {\tiny$12000$};
    \node at (10) [left] {\tiny$12100$};
    \node at (01) [left] {\tiny$13000$};
    \node at (h1) [above] {\tiny$12110$};
    \node at (h2) [below] {\tiny$13100$};
    \node at (-10) [right] {\tiny$13110$};
    \node at (-01) [right] {\tiny$13200$};
    \node at (-11) [right] {\tiny$13210$};
    \node at (-21) [right] {\tiny$13220$};
    \node at (-31) [left] {\tiny$13230$};
    \node at (-32) [above] {\tiny$13231$};
\end{tikzpicture}
\]

There is only one choice of reduced expression $\bs=s_2s_1s_2s_1s_2$ for $\wP$, with the corresponding valuation $\nu=\nu_{\X,\fwt[2],\bs}$ drawn to the right. We find that $\cP_{\X,\fwt[2],\bs}$ has volume $18=\deg(\X)$ and that the lattice points form a Hilbert basis for the cone over $\cP_{\X,\fwt[2],\bs}$ so that both Conjectures \ref{conj:nobody} and \ref{conj:khovanskii} are satisfied. Furthermore, it is interesting to note that $\cP_{\X,\fwt[2],\bs}$ has six vertices, which are obtained from the valuations of the extremal weights. (This is not true in general -- in the previous $\LGF_4$ example as well as the previous $\LGA_3$ example, the non-extremal zero weight spaces contributed vertices to the polytope.)

\addtocontents{toc}{\SkipTocEntry}
\subsection{A coadjoint isotropic Grassmannian}
The final example we present is the homogeneous space $\X=\mathrm{SG}(2,8) = \Sp_8/P_2$, where $\Sp_8$ is of type $\LGC_4$, with the minimal embedding into $\bP(V_{\fwt[2]})$. Here the fundamental weight $\fwt[2]$ is coadjoint as before, and hence the nonzero weights are exactly the short roots and form a single Weyl orbit. A basis for the nonzero-weight spaces is therefore determined up to scaling, and as usual we obtain a natural basis for the zero-weight spaces by considering $\Chf_i(v_{\sr_i})$ for $i\in\{1,2,3\}$; these again allow one-directional edges to the nonzero-weight spaces:
\[
\begin{tikzpicture}[scale=.75]
    \coordinate (1221) at (-6,-1);
    \coordinate (1121) at (-5,0);
    \coordinate (1111) at (-4,1);
    \coordinate (0121) at (-4,-1);
    \coordinate (1110) at (-3,2);
    \coordinate (0111) at (-3,0);
    \coordinate (1100) at (-2,3);
    \coordinate (0110) at (-2,1);
    \coordinate (0011) at (-2,-1);
    \coordinate (1000) at (-1,4);
    \coordinate (0100) at (-1,2);
    \coordinate (0010) at (-1,0);
    \coordinate (h1) at (0,4);
    \coordinate (h2) at (0,2);
    \coordinate (h3) at (0,0);
    \coordinate (-1000) at (1,4);
    \coordinate (-0100) at (1,2);
    \coordinate (-0010) at (1,0);
    \coordinate (-1100) at (2,3);
    \coordinate (-0110) at (2,1);
    \coordinate (-0011) at (2,-1);
    \coordinate (-1110) at (3,2);
    \coordinate (-0111) at (3,0);
    \coordinate (-1111) at (4,1);
    \coordinate (-0121) at (4,-1);
    \coordinate (-1121) at (5,0);
    \coordinate (-1221) at (6,-1);
    \draw[ultra thick]
        (1221) -- (1000) -- (-1000) -- (-1221)
        (1121) -- (0121) -- (0100) -- (-0100) -- (-0121) -- (-1121)
        (1111) -- (0011) -- (0010) -- (-0010) -- (-0011) -- (-1111)
        (1110) -- (0010)    (-0010) -- (-1110)
        (1100) -- (0100)    (-0100) -- (-1100)
        ;
    \draw[ultra thick]
        (h1) -- (0100)
        (h1) -- (-0100)
        (h2) -- (1000)
        (h2) -- (-1000)
        (h2) -- (0010)
        (h2) -- (-0010)
        (h3) -- (0100)
        (h3) -- (-0100)
    ;
    \draw[thick, white]
        (h1) -- (0100)
        (h1) -- (-0100)
        (h2) -- (1000)
        (h2) -- (-1000)
        (h2) -- (0010)
        (h2) -- (-0010)
        (h3) -- (0100)
        (h3) -- (-0100)
    ;
    \draw[black, fill=black]
        (1221) circle (3pt)
        (1121) circle (3pt)
        (1111) circle (3pt)
        (0121) circle (3pt)
        (1110) circle (3pt)
        (0111) circle (3pt)
        (1100) circle (3pt)
        (0110) circle (3pt)
        (0011) circle (3pt)
        (1000) circle (3pt)
        (0100) circle (3pt)
        (0010) circle (3pt)
        (-1000) circle (3pt)
        (-0100) circle (3pt)
        (-0010) circle (3pt)
        (-1100) circle (3pt)
        (-0110) circle (3pt)
        (-0011) circle (3pt)
        (-1110) circle (3pt)
        (-0111) circle (3pt)
        (-1111) circle (3pt)
        (-0121) circle (3pt)
        (-1121) circle (3pt)
        (-1221) circle (3pt)
    ;
    \draw[black, fill=white]
        (h1) circle (3pt)
        (h2) circle (3pt)
        (h3) circle (3pt)
    ;
    \node at (1221) [left] {$\fwt[2]$};
    \node at (-5.5,-.5) [left=3pt, above=-3pt] {\tiny$2$};
    \node at (-4.5,.5) [left=3pt, above=-3pt] {\tiny$3$};
    \node at (-3.5,1.5) [left=3pt, above=-3pt] {\tiny$4$};
    \node at (-2.5,2.5) [left=3pt, above=-3pt] {\tiny$3$};
    \node at (-1.5,3.5) [left=3pt, above=-3pt] {\tiny$2$};
    \node at (-.5,4) [above=-2pt] {\tiny$1$};
    \node at (-4.5,-.5) [right=3pt, above=-3pt] {\tiny$1$};
    \node at (-3.5,.5) [right=3pt, above=-3pt] {\tiny$1$};
    \node at (-2.5,1.5) [right=3pt, above=-3pt] {\tiny$1$};
    \node at (-1.5,2.5) [right=3pt, above=-3pt] {\tiny$1$};
    \node at (-3.5,-.5) [left=3pt, above=-3pt] {\tiny$3$};
    \node at (-2.5,.5) [left=3pt, above=-3pt] {\tiny$4$};
    \node at (-1.5,1.5) [left=3pt, above=-3pt] {\tiny$3$};
    \node at (-.5,2) [above=-2pt] {\tiny$2$};
    \node at (-2.5,-.5) [right=3pt, above=-3pt] {\tiny$2$};
    \node at (-1.5,.5) [right=3pt, above=-3pt] {\tiny$2$};
    \node at (1.5,.5) [left=3pt, above=-3pt] {\tiny$2$};
    \node at (2.5,-.5) [left=3pt, above=-3pt] {\tiny$2$};
    \node at (-1.5,-.5) [left=3pt, above=-3pt] {\tiny$4$};
    \node at (-.5,0) [above=-2pt] {\tiny$3$};
    \node at (.5,0) [above=-2pt] {\tiny$3$};
    \node at (1.5,-.5) [right=3pt, above=-3pt] {\tiny$4$};
    \node at (.5,2) [above=-2pt] {\tiny$2$};
    \node at (1.5,1.5) [right=3pt, above=-3pt] {\tiny$3$};
    \node at (2.5,.5) [right=3pt, above=-3pt] {\tiny$4$};
    \node at (3.5,-.5) [right=3pt, above=-3pt] {\tiny$3$};
    \node at (1.5,2.5) [left=3pt, above=-3pt] {\tiny$1$};
    \node at (2.5,1.5) [left=3pt, above=-3pt] {\tiny$1$};
    \node at (3.5,.5) [left=3pt, above=-3pt] {\tiny$1$};
    \node at (4.5,-.5) [left=3pt, above=-3pt] {\tiny$1$};
    \node at (.5,4) [above=-2pt] {\tiny$1$};
    \node at (1.5,3.5) [right=3pt, above=-3pt] {\tiny$2$};
    \node at (2.5,2.5) [right=3pt, above=-3pt] {\tiny$3$};
    \node at (3.5,1.5) [right=3pt, above=-3pt] {\tiny$4$};
    \node at (4.5,.5) [right=3pt, above=-3pt] {\tiny$3$};
    \node at (5.5,-.5) [right=3pt, above=-3pt] {\tiny$2$};
\end{tikzpicture}
\]
In this case, there are many choices of reduced expression for $\wP$, and we will illustrate three such choices below, namely:
\ali{
\bs_1 &= s_2s_3s_4s_3(s_2s_1s_2)s_3s_4s_3s_2 \\
\bs_2 &= s_2s_3s_4s_3(s_1s_2s_1)s_3s_4s_3s_2 \\
\bs_3 &= s_2(s_1s_3s_2s_4s_3s_4s_2s_3s_1)s_2
}
yielding the following valuations $\nu_i=\nu_{\X,\fwt[2],\bs_i}$ given by:
\[
\begin{tikzpicture}[scale=.75]
    \coordinate (1221) at (-6,-1);
    \coordinate (1121) at (-5,0);
    \coordinate (1111) at (-4,1);
    \coordinate (0121) at (-4,-1);
    \coordinate (1110) at (-3,2);
    \coordinate (0111) at (-3,0);
    \coordinate (1100) at (-2,3);
    \coordinate (0110) at (-2,1);
    \coordinate (0011) at (-2,-1);
    \coordinate (1000) at (-1,4);
    \coordinate (0100) at (-1,2);
    \coordinate (0010) at (-1,0);
    \coordinate (h1) at (0,4);
    \coordinate (h2) at (0,2);
    \coordinate (h3) at (0,0);
    \coordinate (-1000) at (1,4);
    \coordinate (-0100) at (1,2);
    \coordinate (-0010) at (1,0);
    \coordinate (-1100) at (2,3);
    \coordinate (-0110) at (2,1);
    \coordinate (-0011) at (2,-1);
    \coordinate (-1110) at (3,2);
    \coordinate (-0111) at (3,0);
    \coordinate (-1111) at (4,1);
    \coordinate (-0121) at (4,-1);
    \coordinate (-1121) at (5,0);
    \coordinate (-1221) at (6,-1);
    \draw
        (1221) -- (1000) -- (-1000) -- (-1221)
        (1121) -- (0121) -- (0100) -- (-0100) -- (-0121) -- (-1121)
        (1111) -- (0011) -- (0010) -- (-0010) -- (-0011) -- (-1111)
        (1110) -- (0010)    (-0010) -- (-1110)
        (1100) -- (0100)    (-0100) -- (-1100)
    ;
    \draw
        (h1) -- (0100)
        (h1) -- (-0100)
        (h2) -- (1000)
        (h2) -- (-1000)
        (h2) -- (0010)
        (h2) -- (-0010)
        (h3) -- (0100)
        (h3) -- (-0100)
    ;
    \draw[ultra thick, red]
        (1221) -- (1000) -- (h1) -- (-1000) -- (-1221)
        (-0100) -- (-0121)
        (1100) -- (0100) -- (h2)    (-0010) -- (-0011)
        (1110) -- (0010) -- (h3)
        (1111) -- (0011)
        (1121) -- (0121)
    ;
    \draw[ultra thick, red]
        (h1) -- (-0100)
        (h2) -- (-0010)
    ;
    \draw[black, fill=black]
        (1221) circle (3pt)
        (1121) circle (3pt)
        (1111) circle (3pt)
        (0121) circle (3pt)
        (1110) circle (3pt)
        (0111) circle (3pt)
        (1100) circle (3pt)
        (0110) circle (3pt)
        (0011) circle (3pt)
        (1000) circle (3pt)
        (0100) circle (3pt)
        (0010) circle (3pt)
        (-1000) circle (3pt)
        (-0100) circle (3pt)
        (-0010) circle (3pt)
        (-1100) circle (3pt)
        (-0110) circle (3pt)
        (-0011) circle (3pt)
        (-1110) circle (3pt)
        (-0111) circle (3pt)
        (-1111) circle (3pt)
        (-0121) circle (3pt)
        (-1121) circle (3pt)
        (-1221) circle (3pt)
    ;
    \draw[black, fill=white]
        (h1) circle (3pt)
        (h2) circle (3pt)
        (h3) circle (3pt)
    ;
    \node[draw] at (-4,3) {$\nu_1$};
    \node at (1221) [left] {\tiny$0^{11}$};
    \node at (1121) [left] {\tiny$10^{10}$};
    \node at (0121) [left] {\tiny$10^410^5$};
    \node at (1111) [left] {\tiny$1^20^9$};
    \node at (0111) [below=1pt, left=-1pt] {\tiny$1^20^310^5$};
    \node at (0011) [left] {\tiny$1^20^31^20^4$};
    \node at (1110) [left] {\tiny$1^30^8$};
    \node at (0110) [below=1pt, left=-1pt] {\tiny$1^30^210^5$};
    \node at (0010) [below=1pt, left=-1.2pt] {\tiny$1^30^21^20^4$};
    \node at (h3) [below] {\tiny$1^30^21^30^3$};
    \node at (1100) [left] {\tiny$1^40^7$};
    \node at (0100) [below=1pt, left=-1pt] {\tiny$1^4010^5$};
    \node at (h2) [left=1pt, above] {\tiny$1^401^20^4$};
    \node at (-0010) [above=1pt, right=-1pt] {\tiny$1^401^30^3$};
    \node at (-0011) [right=-1pt] {\tiny$1^401^40^2$};
    \node at (1000) [left] {\tiny$1^50^6$};
    \node at (h1) [above] {\tiny$1^60^5$};
    \node at (-0100) [above=1pt, right] {\tiny$1^70^4$};
    \node at (-0110) [above=1pt, right] {\tiny$1^80^3$};
    \node at (-0111) [above=1pt, right] {\tiny$1^90^2$};
    \node at (-0121) [above=1pt, right] {\tiny$1^{10}0$};
    \node at (-1000) [right] {\tiny$1^520^5$};
    \node at (-1100) [right] {\tiny$1^5210^4$};
    \node at (-1110) [right] {\tiny$1^521^20^3$};
    \node at (-1111) [right] {\tiny$1^521^30^2$};
    \node at (-1121) [right] {\tiny$1^521^40$};
    \node at (-1221) [right] {\tiny$1^521^5$};
\end{tikzpicture}
\]
\[
\begin{tikzpicture}[scale=.75]
    \coordinate (1221) at (-6,-1);
    \coordinate (1121) at (-5,0);
    \coordinate (1111) at (-4,1);
    \coordinate (0121) at (-4,-1);
    \coordinate (1110) at (-3,2);
    \coordinate (0111) at (-3,0);
    \coordinate (1100) at (-2,3);
    \coordinate (0110) at (-2,1);
    \coordinate (0011) at (-2,-1);
    \coordinate (1000) at (-1,4);
    \coordinate (0100) at (-1,2);
    \coordinate (0010) at (-1,0);
    \coordinate (h1) at (0,4);
    \coordinate (h2) at (0,2);
    \coordinate (h3) at (0,0);
    \coordinate (-1000) at (1,4);
    \coordinate (-0100) at (1,2);
    \coordinate (-0010) at (1,0);
    \coordinate (-1100) at (2,3);
    \coordinate (-0110) at (2,1);
    \coordinate (-0011) at (2,-1);
    \coordinate (-1110) at (3,2);
    \coordinate (-0111) at (3,0);
    \coordinate (-1111) at (4,1);
    \coordinate (-0121) at (4,-1);
    \coordinate (-1121) at (5,0);
    \coordinate (-1221) at (6,-1);
    \draw
        (1221) -- (1000) -- (-1000) -- (-1221)
        (1121) -- (0121) -- (0100) -- (-0100) -- (-0121) -- (-1121)
        (1111) -- (0011) -- (0010) -- (-0010) -- (-0011) -- (-1111)
        (1110) -- (0010)    (-0010) -- (-1110)
        (1100) -- (0100)    (-0100) -- (-1100)
    ;
    \draw
        (h1) -- (0100)
        (h1) -- (-0100)
        (h2) -- (1000)
        (h2) -- (-1000)
        (h2) -- (0010)
        (h2) -- (-0010)
        (h3) -- (0100)
        (h3) -- (-0100)
    ;
    \draw[ultra thick, red]
        (1221) -- (1100) -- (0100) -- (-0100) -- (-1100) -- (-1221)
        (-0121) -- (-0100)
        (1100) -- (1000) -- (h1)
        (-0010) -- (-0011)
        (1110) -- (0010) -- (h3)
        (1111) -- (0011)
        (1121) -- (0121)
    ;
    \draw[ultra thick, red]
        (h2) -- (-1000)
        (h2) -- (-0010)
    ;
    \draw[black, fill=black]
        (1221) circle (3pt)
        (1121) circle (3pt)
        (1111) circle (3pt)
        (0121) circle (3pt)
        (1110) circle (3pt)
        (0111) circle (3pt)
        (1100) circle (3pt)
        (0110) circle (3pt)
        (0011) circle (3pt)
        (1000) circle (3pt)
        (0100) circle (3pt)
        (0010) circle (3pt)
        (-1000) circle (3pt)
        (-0100) circle (3pt)
        (-0010) circle (3pt)
        (-1100) circle (3pt)
        (-0110) circle (3pt)
        (-0011) circle (3pt)
        (-1110) circle (3pt)
        (-0111) circle (3pt)
        (-1111) circle (3pt)
        (-0121) circle (3pt)
        (-1121) circle (3pt)
        (-1221) circle (3pt)
    ;
    \draw[black, fill=white]
        (h1) circle (3pt)
        (h2) circle (3pt)
        (h3) circle (3pt)
    ;
    \node[draw] at (-4,3) {$\nu_2$};
    \node at (1221) [left] {\tiny$0^{11}$};
    \node at (1121) [left] {\tiny$10^{10}$};
    \node at (0121) [left] {\tiny$10^310^6$};
    \node at (1111) [left] {\tiny$1^20^9$};
    \node at (0111) [below=1pt, left=-1pt] {\tiny$1^20^210^6$};
    \node at (0011) [left=] {\tiny$1^20^21^20^5$};
    \node at (1110) [left] {\tiny$1^30^8$};
    \node at (0110) [below=1pt, left=-1pt] {\tiny$1^3010^6$};
    \node at (0010) [below=1pt, left=-1pt] {\tiny$1^301^20^5$};
    \node at (h3) [below] {\tiny$1^301^2010^3$};
    \node at (1100) [left] {\tiny$1^40^7$};
    \node at (1000) [left] {\tiny$1^4010^5$};
    \node at (h1) [above] {\tiny$1^401^20^4$};
    \node at (0100) [below=1pt, left=-1pt] {\tiny$1^50^6$};
    \node at (h2) [left=6pt, above=-1pt] {\tiny$1^60^5$};
    \node at (-1000) [right] {\tiny$1^70^4$};
    \node at (-0010) [above=1pt, right] {\tiny$1^6010^3$};
    \node at (-0011) [right] {\tiny$1^601^20^2$};
    \node at (-0100) [above=1pt, right] {\tiny$1^520^5$};
    \node at (-0110) [above=1pt, right] {\tiny$1^52010^3$};
    \node at (-0111) [above=1pt, right] {\tiny$1^5201^20^2$};
    \node at (-0121) [right] {\tiny$1^5201^30$};
    \node at (-1100) [right] {\tiny$1^5210^4$};
    \node at (-1110) [right] {\tiny$1^521^20^3$};
    \node at (-1111) [right] {\tiny$1^521^30^2$};
    \node at (-1121) [right] {\tiny$1^521^40$};
    \node at (-1221) [right] {\tiny$1^521^5$};
\end{tikzpicture}
\]
\[
\begin{tikzpicture}[scale=.75]
    \coordinate (1221) at (-6,-1);
    \coordinate (1121) at (-5,0);
    \coordinate (1111) at (-4,1);
    \coordinate (0121) at (-4,-1);
    \coordinate (1110) at (-3,2);
    \coordinate (0111) at (-3,0);
    \coordinate (1100) at (-2,3);
    \coordinate (0110) at (-2,1);
    \coordinate (0011) at (-2,-1);
    \coordinate (1000) at (-1,4);
    \coordinate (0100) at (-1,2);
    \coordinate (0010) at (-1,0);
    \coordinate (h1) at (0,4);
    \coordinate (h2) at (0,2);
    \coordinate (h3) at (0,0);
    \coordinate (-1000) at (1,4);
    \coordinate (-0100) at (1,2);
    \coordinate (-0010) at (1,0);
    \coordinate (-1100) at (2,3);
    \coordinate (-0110) at (2,1);
    \coordinate (-0011) at (2,-1);
    \coordinate (-1110) at (3,2);
    \coordinate (-0111) at (3,0);
    \coordinate (-1111) at (4,1);
    \coordinate (-0121) at (4,-1);
    \coordinate (-1121) at (5,0);
    \coordinate (-1221) at (6,-1);
    \draw
        (1221) -- (1000) -- (-1000) -- (-1221)
        (1121) -- (0121) -- (0100) -- (-0100) -- (-0121) -- (-1121)
        (1111) -- (0011) -- (0010) -- (-0010) -- (-0011) -- (-1111)
        (1110) -- (0010)    (-0010) -- (-1110)
        (1100) -- (0100)    (-0100) -- (-1100)
    ;
    \draw
        (h1) -- (0100)
        (h1) -- (-0100)
        (h2) -- (1000)
        (h2) -- (-1000)
        (h2) -- (0010)
        (h2) -- (-0010)
        (h3) -- (0100)
        (h3) -- (-0100)
    ;
    \draw[ultra thick, red]
        (1221) -- (1121) -- (0121) -- (0111) -- (0011) -- (0010) -- (-0010) -- (-0011) -- (-0111) -- (-0121) -- (-1121) -- (-1221)
        (1121) -- (1000) -- (h1)
        (0111) -- (0100) -- (h2)
        (-0100) -- (-1100)
        (-0010) -- (-1110)
        (-0111) -- (-1111)
    ;
    \draw[ultra thick, red]
        (h2) -- (-1000)
        (h3) -- (-0100)
    ;
    \draw[black, fill=black]
        (1221) circle (3pt)
        (1121) circle (3pt)
        (1111) circle (3pt)
        (0121) circle (3pt)
        (1110) circle (3pt)
        (0111) circle (3pt)
        (1100) circle (3pt)
        (0110) circle (3pt)
        (0011) circle (3pt)
        (1000) circle (3pt)
        (0100) circle (3pt)
        (0010) circle (3pt)
        (-1000) circle (3pt)
        (-0100) circle (3pt)
        (-0010) circle (3pt)
        (-1100) circle (3pt)
        (-0110) circle (3pt)
        (-0011) circle (3pt)
        (-1110) circle (3pt)
        (-0111) circle (3pt)
        (-1111) circle (3pt)
        (-0121) circle (3pt)
        (-1121) circle (3pt)
        (-1221) circle (3pt)
    ;
    \draw[black, fill=white]
        (h1) circle (3pt)
        (h2) circle (3pt)
        (h3) circle (3pt)
    ;
    \node[draw] at (-4,3) {$\nu_3$};
    \node at (1221) [left] {\tiny$0^{11}$};
    \node at (1121) [left] {\tiny$10^{10}$};
    \node at (0121) [left] {\tiny$1^20^9$};
    \node at (1111) [left] {\tiny$1010^8$};
    \node at (0111) [below=1pt, left=-1pt] {\tiny$1^30^8$};
    \node at (0011) [left=] {\tiny$1^40^7$};
    \node at (1110) [left] {\tiny$101010^6$};
    \node at (0110) [below=1pt, left=-1pt] {\tiny$1^3010^6$};
    \node at (0010) [below=1pt, left=-1pt] {\tiny$1^50^6$};
    \node at (h3) [below] {\tiny$1^60^5$};
    \node at (1100) [left] {\tiny$1010110^5$};
    \node at (1000) [left] {\tiny$10101^2010^3$};
    \node at (h1) [above] {\tiny$10101^201010$};
    \node at (0100) [below=1pt, left=-1pt] {\tiny$1^301^20^5$};
    \node at (h2) [above] {\tiny$1^301^2010^3$};
    \node at (-1000) [right] {\tiny$1^301^201010$};
    \node at (-0010) [above=1pt, right] {\tiny$1^520^5$};
    \node at (-0011) [right] {\tiny$1^5210^4$};
    \node at (-0100) [above=1pt, right] {\tiny$1^6010^3$};
    \node at (-0110) [above=1pt, right] {\tiny$1^52010^3$};
    \node at (-0111) [above=1pt, right] {\tiny$1^521^20^3$};
    \node at (-0121) [right] {\tiny$1^521^30^2$};
    \node at (-1100) [right] {\tiny$1^601010$};
    \node at (-1110) [right] {\tiny$1^5201010$};
    \node at (-1111) [right] {\tiny$1^521^2010$};
    \node at (-1121) [right] {\tiny$1^521^40$};
    \node at (-1221) [right] {\tiny$1^521^5$};
\end{tikzpicture}
\]
The first two choices, $\bs_1$ and $\bs_2$, yield polytopes $\cP_{\X,\fwt[2],\bs_i}$ with volume less than $132=\deg(\X)$ (the first has volume 84, the second has volume 105), so neither of these confirms our conjecture. On the other hand, the third choice, $\bs_3$, yields a polytope $\cP_{\X,\varpi,\bs_3}$ that has volume $132=\deg(\X)$ and furthermore its lattice points form a Hilbert basis, confirming Conjectures \ref{conj:nobody} and \ref{conj:khovanskii}.

\end{document}